\newcommand{\teq}{\trianglelefteq}
\theoremstyle{plain}
\newtheorem{theorem}{Theorem}[section]
\newtheorem{lemma}[theorem]{Lemma}
\newtheorem*{tho}{Main Theorem}
\newtheorem{prop}[theorem]{Proposition}
\newtheorem{cor}[theorem]{Corollary}
\newtheorem{lem}[theorem]{Lemma}
\theoremstyle{definition}
\newtheorem{definition}[theorem]{Definition}
\newtheorem{example}[theorem]{Example}
\theoremstyle{remark}
\newtheorem{remark}[theorem]{Remark}
\numberwithin{equation}{section}
\def\sub{\subseteq}
\def\C{\mathbb C}
\def\F{\mathbb F}
\def\Z{\mathbb Z}
\def\bB{\mathbf B}
\def\bG{\mathbf G}
\def\bT{\mathbf T}
\def\bU{\mathbf U}
\def\cF{\mathcal F}
\def\cA{\mathcal A}
\def\cC{\mathcal C}
\def\cD{\mathcal D}
\def\cH{\mathcal H}
\def\cI{\mathcal I}
\def\cJ{\mathcal J}
\def\cK{\mathcal K}
\def\cL{\mathcal L}
\def\cP{\mathcal P}
\def\cR{\mathcal R}
\def\cS{\mathcal S}
\def\cZ{\mathcal Z}
\def\rA{\mathrm A}
\def\rB{\mathrm B}
\def\rC{\mathrm C}
\def\rD{\mathrm D}
\def\rE{\mathrm E}
\def\rF{\mathrm F}
\def\rG{\mathrm G}
\def\rU{\mathrm U}
\def\rY{\mathrm Y}
\def\fC{\mathfrak C}
\def\fO{\mathfrak O}
\def\tx{\tilde x}
\def\tX{\tilde X}
\def\tY{\tilde Y}
\def\tlm{\tilde \lambda}
\def\ua{{\underline a}}
\def\ub{{\underline b}}
\def\uhb{\hat{\underline b}}
\def\ui{{\underline i}}
\def\Ind{\operatorname{Ind}}
\def\Inf{\operatorname{Inf}}
\def\Irr{\operatorname{Irr}}
\def\Tr{\operatorname{Tr}}
\def\lm{\lambda}
\def\hlm{\hat \lambda}
\title[Irreducible characters of $\textrm{UD}_6(q)$ and $\textrm{UE}_6(q)$]{The irreducible characters of 
the Sylow $p$-subgroups of the Chevalley groups $\mathrm{D}_6(p^f)$ and $\mathrm{E}_6(p^f)$}
\author{Tung Le, Kay Magaard and Alessandro Paolini}
\address{} \email{kymgrd@yahoo.com}
\address{Department of Mathematics and Applied Mathematics, University of Pretoria, Pretoria 0002, South Africa}
\email{lttung96@yahoo.com}
\address{FB Mathematik, TU Kaiserslautern, Postfach 3049, 67653 Kaiserslautern, Germany.} \email{paolini@mathematik.uni-kl.de}
\thanks{Date: \today. \\
2010 \emph{Mathematics Subject Classification}. Primary 20C33, 20C15; Secondary 20C40, 20G41. \\
\emph{Key words and phrases}: irreducible characters, Sylow subgroups, nonabelian cores, bad primes.
}
\begin{document}

\begin{abstract} We parametrize the set of irreducible characters of 
the Sylow $p$-subgroups of the Chevalley groups $\rD_6(q)$ and $\rE_6(q)$, for an 
arbitrary power $q$ of any prime $p$. 
In particular, we establish that the parametrization 
is uniform for $p \ge 3$ in type $\rD_6$ and for $p \ge 5$ in type $\rE_6$, while 
the prime $2$ in type $\rD_6$ and the primes $2,$ $3$ in type $\rE_6$ yield character degrees of the form 
$q^m/p^i$ which force a departure from the generic situations.  
Also for the first time in our analysis we see a family of 
irreducible characters of a classical group 
of degree $q^m/p^i$ where $i > 1$ which occurs in type $\rD_6$.
\end{abstract}

\maketitle

\section{Introduction}\label{sec:intr} 

Let $q$ be a power of a prime $p$, and let $G$ be a finite group of Lie type over $\mathbb{F}_q$ and 
$U$ be a
Sylow $p$-subgroup of $G$. 
Let $B:=N_G(U)$ and let 
$P$ be a parabolic subgroup of $G$ with $U < B \leq P \leq G$. 
We denote by $\ell$ a prime distinct from $p$. 

Harish-Chandra theory for $\ell$-modular representations of general finite groups of Lie type initiated by Hiss in \cite{His90,His93} 
and continued in \cite{GHM96}. The theory suggests that the representation theory of parabolic subgroups $P$ of $G$ as above has strong influence on the representation theory of $G$, 
in particular towards a determination of its decomposition numbers.  
This is further evidenced by work of Gruber and Hiss \cite{GH97} for classical groups at primes $\ell$.
More recently this has been taken further in \cite{DM15} and \cite{DM16} for some non-linear classes of primes. 
For small rank groups the calculation of decomposition numbers in \cite{Him11}, \cite{HN14} and \cite{HN15} 
made strong use of the representation theory of parabolic subgroups along with induction/restriction methods to 
compute decomposition numbers. Most recently the third author \cite{Pao17} was able to compute most decomposition numbers
of the groups $\rD_4(2^f)$ using the generic character table of $\rU\rD_4(2^f)$ which was computed in \cite{GLM17}; here and throughout 
$\rU\rY_r(q)$ denotes a Sylow $p$-subgroup of the 
group $\rY_r(q)$ of type $\rY$ and rank $r$ defined over $\F_q$. 

Our present work is a first step in computing the character tables of $\mathrm{UD}_6(q)$ and $\mathrm{UE}_6(q)$ which 
are intended to aid with the determination of the decomposition numbers of all finite groups of Lie type up to rank $8$, in particular of those of exceptional type. 
The elements of $\Irr(U)$ where $G \cong \rF_4(q)$, with $q$ odd, were parametrized in \cite{GLMP16} by means 
of a recursive procedure, relying on basic character correspondences,  which leads to a natural construction of 
characters via induction from linear characters of certain subgroups. Indeed the terminal points of our algorithm are certain 
subquotients of $U$, which we call cores.
To construct the members of a family one starts with a core $Q$ with centre $Z$, and with a subset $\Irr(Q)_Z$ of $\Irr(Q)$ 
consisting of characters which lie over the centre in such a way as to not contain any root subgroup of $Z$ in their kernels. For each element
of $\Irr(Q)_Z$ one can trace back through the algorithm, and write down a unique character of $U$. 

If $Q = Z$ is abelian, then we can simply extract the family and its parameters. We have been able to 
completely automate this part of the algorithm. If $Q \neq Z$, then we need to determine $\Irr(Q)_Z$, which in the situation 
of ${\rm UF}_4(q)$ involved a manageable amount of case analysis. While the number of nonabelian cores for 
${\rm UF}_4(q)$ is $6$, this number increases to $105$ for ${\rm UE}_6(q)$ and to several millions for ${\rm UE}_8(q)$.
We recall from \cite{GLM17} and \cite{HLM16} that the characters of ${\rm UE}_6(q)$ are naturally partitioned into 
$833$ families which are indexed by the antichains in the poset of positive roots. To each family we apply  
our algorithm, which naturally splits each family into collections of subfamilies. For type $\rE_8$ the first 
partition already leads to $25080$ families. 
Thus it becomes clear that the generic character tables of the groups ${\rm UY}_r(q)$ are best processed in a machine-readable format and 
ideally in a format that can be incorporated into \cite{CHEVIE}, the computer algebra system which provides a 
platform for calculations with the generic character tables of finite groups of Lie type. 
Our main theorem thus takes the following form:

\begin{tho}
Let $q$ be a power of a prime $p$, let $G$ be a finite Chevalley group over $\mathbb{F}_q$ of type $\rD_6$ or $\rE_6$, and let $U$ be a Sylow $p$-subgroup of $G$. 
Then the irreducible characters of $U$ are completely parametrized. Each character can be obtained as an induced character of a 
linear character of a certain determined subgroup. In 
particular, if $v:=q-1$, we have 
\begin{enumerate}
\item $|\Irr(\rU\rD_6(q))| =
\begin{cases}
p_1(v), & \text{ if } q \text{ is odd}, \\
p_1(v)+3v^4(v^4+18v^3+63v^2+58v+9), & \text{ if } 
q=2^f,
\end{cases} $ 
\vspace{1mm}
\item $|\Irr(\rU\rE_6(q))| =
\begin{cases}
p_2(v), & \text{ if } \gcd(q,6)=1, \\
p_2(v)+v^6(v^2+6v+12), & \text{ if } q=3^f, \\
p_2(v)+3v^4(2v^4+26v^3+103v^2+317v+45), & \text{ if }
q=2^f,
\end{cases} $
\end{enumerate}
where $p_1(v)$ and $p_2(v)$ are polynomial expressions in $v$ as in 
Tables \ref{tab:fam3D6} and \ref{tab:fam5E6} respectively. 
\end{tho}

We collect here further consequences of the parametrization in 
the above theorem. When $p$ is a bad prime for $\rE_6$, then $\Irr(\rU\rE_6(q))$ possesses characters of degree $q^i/2$ for $3 \leq i \leq 15$ if $p =2$ and 
of degree $q^7/3$ if $p =3$, whereas if $p=2$ then $\Irr(\rU\rD_6(q))$ possesses elements of degree $q^i/2$ for $3 \leq i \leq 11$, and 
also a family of characters of degree $q^{10}/4$ which is obtained by induction from the family $\cF_3^{4, p=2}$ in Table \ref{tab:coresD6} of characters of degree $q^6/4$; 
other irreducible character degrees 
are all powers of $q$. 
The numbers of characters of 
fixed degree are given in Tables \ref{tab:fam3D6} 
to \ref{tab:fam2E6}. We easily 
check in these cases the validity of the generalization of 
\cite[Conjecture B]{Is07} in types different from $\rA$, 
which in turn refines \cite[Conjecture 6.3]{Leh74}, namely the numbers of 
irreducible characters of $U$ of fixed degree 
can always be expressed as polynomial expressions 
in $v$ with non-negative integral coefficients if 
$p$ is good. 
One also immediately deduces by the records 
in Table \ref{tab:fam3E6} for $\Irr(\rU\rE_6(3^f))$
that an extension of such statement to bad primes would not hold; 
this is the only such instance 
for the groups $\rU\rY_r(q)$ with $\rY$ of simply laced type and $r\le 6$, 
namely in this case $\rU\rY_r(q)$ is a natural quotient 
of $U$, and 
a parametrization for $\Irr(\rU\rY_r(q))$ is 
obtained via the labels determined in our theorem. 
The actual complete list of families is available on the webpage of the third author \cite{LMP} 
in both tabular and machine-readable format. 

The obstruction to automating the parametrization of $\Irr(U)$ is the nonabelian cores mentioned above. Thus our focus in this paper 
is on nonabelian cores with a view towards automating these calculations as well. The total number of families of nonabelian cores that 
we have to consider is $27$ for $\rD_6$ and $105$ for $\rE_6$. Fortunately several cores are isomorphic, which reduces our problem to $7$ isomorphism types of cores for $\rD_6$ and $16$ for 
$\rE_6$ which are easily separated by a set of three invariants; this is proved in Section \ref{sec:nonc}. 
Also certain cores are isomorphic to ones that we have seen in \cite{GLMP16}, which 
simplifies our work even further. In Section \ref{sec:cors} we begin by proving a variant of our reduction lemma which serves as a 
foundational tool of our analysis of nonabelian cores. Also in this section we introduce the concept of a generalized root group which 
allows us to consider transversals which are well suited for our character correspondences, and the concept of a ``circle quattern''. 
In fact it is the latter concept which we believe will be crucial in automating the analysis of nonabelian cores. We illustrate 
all of this in our analysis of the nonabelian cores of $\rU\rD_6(q)$ and $\rU\rE_6(q)$ in Section \ref{sec:para}. We record the results 
of our analysis of the nonabelian cores in Tables \ref{tab:coresD6} and \ref{tab:coresE6}. 

To finish, we remark that for groups of rank higher than $6$ 
the three invariants mentioned above are not strong enough to separate cores into isomorphism types, and we illustrate this with an example in 
${\rm UE}_7(q)$. Also we remark here that for ${\rm UE}_8(q)$ the 
cardinality of the set of invariants of nonabelian cores is in the neighborhood 
of $2\cdot 10^5$, and that the number of isomorphism types is around $4\cdot 10^5$; again making clear the need for automation. 

\vspace{1mm}

\noindent
\textbf{Acknowledgement:} The authors would like to thank S. M. Goodwin for 
helpful remarks about the work, and G. Malle for his valuable comments on an 
earlier version of the paper. 
Additional thanks go to E. O'Brien for confirming our results independently 
for small values of $q$. 
Part of the work was developed during three research 
visits, namely of the three authors at the EPFL in December 2016, 
of the first author at the University of Birmingham in June 2017, and 
of the first and the second author at the TU Kaiserslautern in July 2017, which 
were funded respectively by the Centre Interfacultaire Bernoulli, 
by the SFB-TRR 195 and by the NRF Incentive Grant; special thanks 
go to the three institutes for the funding support and the kind hospitality. 
The third author acknowledges financial support from the School of Mathematics of the University of Birmingham, 
the ERC Advanced Grant 291512, and the SFB-TRR 195.

\section{Preliminaries}\label{sec:gens}

\subsection{Characters of finite groups.} \label{ss:characters}

Let $G$ be a finite group. For $g, h \in G$, we use the notation $g^h:=h^{-1}gh$ (respectively ${}^hg:=hgh^{-1}$) for 
the right (respectively left) conjugation in $G$. The centre 
of $G$ is denoted by $Z(G)$. We usually denote by $\chi$ an irreducible 
character afforded by some representation. 
All characters considered in this work are 
ordinary; we refer to \cite{Is} for the basics on character theory of finite groups. 
We denote by $\ker(\chi)$ the kernel of a character $\chi$, and by $Z(\chi)$ the 
centre of $\chi$. Moreover, we denote by $\Irr(G)$ the set of irreducible characters of $G$. 

If $N \teq G$, and $\chi \in \Irr(G/N)$, we denote 
by $\Inf_N^G(\chi)$ 
the inflation of $\chi$ to $G$. 
For $H \le G$ and $\eta\in\Irr(H)$, we denote by $\Ind_H^G(\eta)$, or shortly $\eta^G$, the 
induction of the character $\eta$ from $H$ to $G$, and we define
$$\Irr(G \mid \eta):=\{\chi \in \Irr(G) \mid \langle \chi, \eta^G \rangle \ne 0\}=
\{ \chi \in \Irr(G) \mid \langle \chi|_{H}, \eta  \rangle \ne 0 \},$$
where $\langle \,\, , \, \rangle$ is the usual scalar product of characters. Let $\chi_1$ and $\chi_2$ be two characters of $G$. 
The character $\chi_1 \otimes \chi_2$ 
denotes the tensor product of $\chi_1$ and $\chi_2$. If $H \le G$, and $\chi \in \Irr(G)$ and 
$\psi \in \Irr(H)$, then $(\chi|_H \otimes \psi)^G=\chi \otimes \psi^G$. 
Moreover, if $N$ 
is a normal subgroup of $G$ contained in $H$, then for every $\chi \in \Irr(H/N)$ we have that 
$$\Inf_{G/N}^G \Ind_{H/N}^{G/N} \chi= \Ind_H^G \Inf_{H/N}^H \chi.$$

Let $\eta \in \Irr(N)$ with $N \teq G$. For $g\in G$, we denote by ${}^g\eta$ the irreducible character 
of $N$ such that ${}^g\eta(x):=\eta(x^g)$ for every $x \in N$. The group $G$ naturally acts 
on $\Irr(N)$ by conjugation. 
Let us define 
the inertia subgroup of $\eta$ in $G$ by $I_G(\eta):=\{g \in G \mid {}^g\eta=\eta\}$. Then 
$$\Ind_{I_G(\eta)}^G: \Irr(I_G(\eta) \mid \eta) \longrightarrow \Irr(G \mid \eta)$$
is a bijection of irreducible characters. Moreover, if $Z \le Z(G)$ is such that $Z \cap N=1$, 
and $\lambda \in \Irr(Z)$, then
$$\Inf_{G/N}^G: \Irr(G/N \mid \lambda) \longrightarrow \Irr(G \mid \Inf_{G/N}^G(\lambda))$$
is also a bijection. 

We finish by describing the set $\Irr(\F_q)$. Let us define $\phi: \F_q \rightarrow \mathbb{C}^\times$ 
by $\phi(t):=e^{2\pi i \Tr(t)/p}$ for all $t \in \F_q$, where $\Tr: \F_q \rightarrow \F_p\cong \Z_p$ is the 
trace map of the field extension $\F_q \mid \F_p$. For each $a\in\F_q$, we define the map $\phi_a$ by $\phi_a(t):=\phi(at)$ for every $t \in \F_q$. Then 
$\Irr(\F_q)=\{\phi_a \mid a \in \F_q\}$. Moreover, it is easy to see that if $a \in \F_q^{\times}$, then 
$$\ker(\phi_a)=\{a^{p-1}t^p-t \mid t \in \F_q\}.$$

\subsection{Simple algebraic groups and Frobenius morphisms.} \label{ss:reductive}

We refer to \cite{DM} and \cite{MT} for basic properties and definitions of finite reductive groups. 
Let $q:=p^f$, where $p$ is a prime and $f \in \Z_{>0}$. 
Let $\F_q$ be a general finite field with $q$ elements, and let 
$k:=\overline{\F}_p$ be an algebraic closure of $\F_q$. 
We denote by $\bG$ a simple 
algebraic group over the field $k$. 

Let $F: \bG \to \bG$ be a standard Frobenius morphism. 
Let $\bT$ be a maximal torus of $\bG$ such that $F(\bT)=\bT$, and let $\bB$ be a Borel subgroup of $\bG$ containing $\bT$ such that $F(\bB)=\bB$.
Let $\bU$ be the unipotent radical of $\bB$. Here, $\bB= N_\bG(\bU)=\bT \bU$. 
From now on, we fix such $F$-stable subgroups $\bT$, $\bU$ and $\bB$.
The group $G:=\bG^F$ of fixed points of $\bG$ under $F$ is a finite reductive group. 
Further, we set $B:=\bB^F$, $T:=\bT^F$, and $U:=\bU^F$.
Here, we have $B=N_G(U)=T \ltimes U$, and $U$ is a Sylow $p$-subgroup of $G$. 
All subgroups $B$, $T$, and $U$ are 
fixed for the rest of the work.  

Let $\Phi$ denote the root system associated to $\bG$ with respect to $\bT$, 
and let $\Pi:=\{\alpha_1, \dots, \alpha_r\}$ be the set of simple roots of $\Phi$, 
where $r$ is the rank of $\Phi$. 
Let $\Phi^+ \subseteq \Phi$ denote the set of positive roots in $\Phi$, and let $m$ be 
the number of positive roots. 
We fix a total ordering on $\Phi^+=\{\alpha_1, \dots, \alpha_{m}\}$ 
by refining the partial order on $\Phi^+$, defined 
by $\alpha < \beta$ if $\beta - \alpha$ 
is a sum of simple roots; this agrees with the ordering in \cite{GAP4}. If $\Phi$ is of type $\rY$ and rank $r$, we sometimes denote $U$ more explicitly by 
$\rU \rY_r(q)$. 

For each $\alpha \in \Phi^+$, there exist an $F$-stable subgroup $\bU_{\alpha} \subseteq \bU$ and an 
isomorphism $x_{\alpha}: k \rightarrow \bU_{\alpha}$, such that 
$$\bU_{\alpha}:=\{x_{\alpha}(t) \mid t \in k\} \cong (k,+), \qquad \text{and} 
\qquad X_{\alpha}:=\bU_{\alpha}^F=\{x_{\alpha}(t) \mid t \in \F_q\} \cong (\F_q,+).$$
The subgroup $X_{\alpha}$ of $G$ is called a \emph{root subgroup}, and an element 
of the form $x_{\alpha}(t)$ is called a \emph{root element}. 
We often abbreviate and write $X_i$ for $X_{\alpha_i}$ and $x_i$ for $x_{\alpha_i}$. The group 
$U$ is the product of all root subgroups labelled by positive roots, and each element of $U$ can be uniquely written as a product 
$x_1(t_1)\cdots x_{m}(t_{m})$ for some $t_1, \cdots, t_{m} \in \F_q$. 
A presentation for $U$ is given by the Chevalley relations
\begin{equation}\label{eq:comrel}
[x_{\alpha}(s), x_{\beta}(r)]=
\prod_{\substack{i, j \in \mathbb{Z}_{>0} \mid i\alpha+j\beta \in \Phi^+}}
x_{i\alpha+j\beta}(c_{i,j}^{\alpha,\beta}(-r)^js^i)
\end{equation}
for every $r, s \in \F_q$ and $\alpha, \beta \in \Phi^+$, and for some $c_{i,j}^{\alpha,\beta} \in \mathbb{Z}\setminus \{0\}$, 
called \emph{Lie structure constants}. As proved in \cite[Section 5.2]{Car}, the parametrizations of the root subgroups 
can be chosen so that the structure constants $c_{i,j}^{\alpha,\beta}$ are always 
$\pm 1$, $\pm 2$, $\pm 3$, where $\pm 2$ occurs only for $G$ of types $\rB_r$, $\rC_r$, $\rF_4$ or $\rG_2$, and $\pm 3$
only occurs for $G$ of type $\rG_2$. The signs are determined by fixing the ones corresponding to the so-called extraspecial pairs of roots; 
our choice agrees with the records in the computer algebra system \cite{MAGMA}.

\subsection{Quattern groups.} \label{ss:quattern} We now recall some 
properties that link the structure of $\Phi^+$ with that of $U$. If 
$\cA=\{\alpha_{i_1}, \dots, \alpha_{i_k}\}$ is a subset of $\Phi^+$ where $i_1 < \dots < i_k$, 
we define
$$X_{\cA}:=\prod_{j=1}^{k}X_{\alpha_{i_j}}.$$
This is in general not always a subgroup, but it will be in all cases of our interest. 

We recall some definitions and properties from \cite{HLM16}. We say that $\cP$ is 
a \emph{pattern} in $\Phi^+$ if $\alpha, \beta \in \cP$ and 
$\alpha + \beta \in \Phi^+$ imply $\alpha + \beta \in \cP$. 
Patterns are also known as \emph{closed subsets} of $\Phi^+$, see for example \cite[Definition 13.2]{MT}. 
It is easy to check, with no restrictions 
on the prime $p$, that if $\cP$ is a pattern, then $X_{\cP}$ is a subgroup of $U$. If $p$ is not a very 
bad prime for $\Phi^+$, the converse also holds. For very bad primes the converse does not hold in general. 
For example, if $p=2$ and $\alpha_2$ is the simple short root in type $\rB_2$, then $X_{\alpha_2}X_{\alpha_1+\alpha_2}$ is a subgroup of $\rU \rB _2(2^f)$, but $\{\alpha_2, \alpha_1+\alpha_2\}$ is not a pattern. 

Let $\cK, \cP$ be two patterns and $\cK \subseteq \cP$. We say that $\cK$ is \emph{normal} in $\cP$, 
denoted as $\cK \teq \cP$, if for all $\delta \in \cK$ and $\beta \in \cP$, $\delta+\beta \in \cP$ implies $\delta+\beta \in \cK$. 
If $\cK \teq \cP$, then we have that $X_{\cK} \teq X_{\cP}$. If $p$ is not 
a very bad prime for $\Phi^+$, then $X_{\cK} \teq X_{\cP}$ for two patterns $\cK \subseteq \cP$ 
also implies $\cK \teq \cP$. Again, this is not true in type $\rB_2$ when $p=2$, namely $X_{\alpha_1+\alpha_2} \teq 
\rU \rB_2 (2^f)=X_{\Phi^+}$, but $\{\alpha_1+\alpha_2\}$ is not a normal pattern in $\Phi^+$.

A subset $\cS \subseteq \Phi^+$ is called a \emph{quattern} if $\cS=\cP \setminus \cK$ for some 
pattern $\cP$ and $\cK \teq \cP$. 
Given a quattern $\cS \subseteq \Phi^+$ such that $\cS=\cP \setminus \cK$, 
we define the \emph{quattern group} $X_\cS$ associated to $\cS$ by
$$X_{\cS}:=X_{\cP} / X_{\cK}.$$
This subquotient of $U$ is well-defined, in the sense that if $\cS=\cP' \setminus \cK'$ for 
$\cP'$ a quattern and $\cK' \teq \cP'$, then $X_{\cS} \cong X_{\cS'}$. 

If $\cS$ is a quattern, we define 
$$\cZ(\cS): =\{\gamma \in \cS \mid \gamma+\alpha \notin \cS \text{ for all } \alpha \in \cS\}$$
the set of central roots in $\cS$, and 
$$\cD(\cS): = \{\gamma \in \cZ(\cS) \mid \alpha+\beta \ne \gamma \text{ for all } \alpha, \beta \in \cS\}$$
the set of roots parametrizing the root subgroups which are direct factors in $X_\cS$. We have 
$$Z(X_\cS) = X_{\cZ(\cS)} \qquad \text{and} \qquad X_\cS = X_{\cS \setminus \cD(\cS)} \times X_{\cD(\cS)}.$$
We define the set of irreducible characters of $X_{\cS}$ with central root support $\cZ \subseteq \cZ(\cS)$ by 
\begin{equation*}\label{eq:irr}
\Irr(X_\cS)_\cZ := \{\chi \in \Irr(X_\cS) \mid X_\alpha \not\sub \ker(\chi) \text{ for all } \alpha \in \cZ\}.
\end{equation*}
Hence we have
\begin{equation}\label{eq:decomp}
\Irr(X_\cS)_\cZ = \bigsqcup_{\lambda \in \Irr(X_{\cS})_{\cZ}} \Irr(X_\cS \mid \lambda), 
\end{equation}
and it is easy to see that 
\begin{equation}\label{eq:allfam}
\sum_{\chi \in \Irr(X_\cS)_\cZ}\chi(1)^2=q^{|\cS\setminus \cZ|}(q-1)^{|\cZ|}. 
\end{equation}

The importance of studying quatterns comes from the fact that we can partition $\Irr(U)$ 
into families of irreducible characters of quattern groups $X_{\cS}$ with central root support a certain 
$\cZ \subseteq \cZ(\cS)$. More precisely, let $\Sigma$ denote an antichain of $\Phi^+$, that is, 
a subset of $\Phi^+$ such that 
$$\alpha, \beta \in \Sigma \text{ and } \alpha\neq\beta \Longrightarrow \alpha \nleq \beta \text{ and } \beta \nleq \alpha.$$
The subset $\cK_{\Sigma}$ defined by
$$\cK_\Sigma:=\{\beta \in \Phi^+ \mid \beta \nleq \gamma \text{ for all } \gamma \in \Sigma \}$$
is a normal subset of $\Phi^+$. We define the \emph{standard quattern} $\cS_{\Sigma}$ associated to $\Sigma$ by 
$\cS_{\Sigma}:=\Phi^+ \setminus \cK_{\Sigma}$. Notice that $\Sigma=\cZ(\cS_{\Sigma})$. Finally, we define
$$\Irr(U)_{\Sigma}:=\{ \Inf_{X_{\cS_\Sigma}}^{U} (\chi) \mid \chi \in \Irr(X_{\cS_{\Sigma}})_{\Sigma}\}.$$
We can now state the partition of irreducible characters previously announced. 
 
\begin{prop}[\cite{HLM16}, Proposition 5.16] \label{prop:HLM}We have that 
$$\Irr(U)=\bigsqcup_{\Sigma \text{ antichain in } \Phi^+} \Irr(U)_{\Sigma}.$$
\end{prop}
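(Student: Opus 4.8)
The plan is to establish the asserted partition by first showing that each set $\Irr(U)_\Sigma$ genuinely consists of irreducible characters of $U$, then showing these sets are pairwise disjoint, and finally showing they exhaust $\Irr(U)$. For the first point I would use the fact, recorded earlier in the excerpt, that for $\cK_\Sigma \teq \Phi^+$ the inflation map
$$\Inf_{X_{\cS_\Sigma}}^{U} : \Irr(U/X_{\cK_\Sigma}) \longrightarrow \Irr(U)$$
is injective, and that $X_{\cS_\Sigma} = X_{\Phi^+}/X_{\cK_\Sigma} = U/X_{\cK_\Sigma}$ by definition of the quattern group. Since $\Sigma = \cZ(\cS_\Sigma)$, the set $\Irr(X_{\cS_\Sigma})_\Sigma$ is a well-defined subset of $\Irr(X_{\cS_\Sigma})$, and its image under inflation lands in $\Irr(U)$; thus each $\Irr(U)_\Sigma$ is indeed a set of irreducible characters, and the inflation restricts to a bijection onto $\Irr(U)_\Sigma$.

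The key structural step is to attach to each $\chi \in \Irr(U)$ a \emph{canonical antichain}, which will serve as the index witnessing that $\chi$ lies in exactly one $\Irr(U)_\Sigma$. Given $\chi$, consider the set of positive roots $\alpha$ for which $X_\alpha \not\sub \ker(\chi)$, or more precisely the set of \emph{maximal} such roots relative to the partial order on $\Phi^+$. I would define $\Sigma(\chi)$ to be a suitable antichain built from the support of $\chi$ on root subgroups, and verify two things: that $\cK_{\Sigma(\chi)} \sub \ker(\chi)$ so that $\chi$ factors through $X_{\cS_{\Sigma(\chi)}} = U/X_{\cK_{\Sigma(\chi)}}$, and that the deflated character lies in $\Irr(X_{\cS_{\Sigma(\chi)}})_{\Sigma(\chi)}$, i.e. no root subgroup $X_\alpha$ with $\alpha \in \Sigma(\chi)$ is contained in its kernel. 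The first condition places $\chi$ into $\Irr(U)_{\Sigma(\chi)}$; the second condition forces $\Sigma = \Sigma(\chi)$ for any $\Sigma$ with $\chi \in \Irr(U)_\Sigma$, which simultaneously yields both exhaustion and disjointness.

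For disjointness, suppose $\chi \in \Irr(U)_\Sigma \cap \Irr(U)_{\Sigma'}$. From membership in $\Irr(U)_\Sigma$ one reads off that $\cK_\Sigma \sub \ker(\chi)$ while $X_\alpha \not\sub \ker(\chi)$ for every $\alpha \in \Sigma$; the same holds with $\Sigma'$. I would argue that these kernel-theoretic conditions pin down the antichain uniquely as the set of roots maximal among those with $X_\alpha \not\sub \ker(\chi)$, forcing $\Sigma = \Sigma'$. The main obstacle I anticipate is precisely the verification that the deflated character satisfies the central-support condition $X_\alpha \not\sub \ker(\chi)$ for \emph{all} $\alpha \in \cZ(\cS_{\Sigma(\chi)})$ rather than merely some of them, since the centre $Z(X_{\cS_\Sigma}) = X_{\cZ(\cS_\Sigma)}$ may a priori be larger than $X_\Sigma$ when $\Sigma$ is not all of $\cZ(\cS_\Sigma)$; one must check that the construction of $\Sigma(\chi)$ from the support is compatible with the passage to the quotient and that no central root outside the support sneaks in. This is a finite combinatorial check on antichains together with a careful analysis of how kernels behave under the inflation/deflation correspondence, and it is where the definitions of $\cZ(\cS)$ and $\cK_\Sigma$ interact most delicately.
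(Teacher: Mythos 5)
Your proposal is correct, and it is essentially the standard argument; note that this paper does not prove the statement itself but cites it from \cite{HLM16}, and the proof there proceeds exactly as you outline: attach to each $\chi \in \Irr(U)$ the antichain $\Sigma(\chi)$ of maximal elements of the root support $\{\alpha \in \Phi^+ \mid X_\alpha \not\sub \ker(\chi)\}$, check that $X_{\cK_{\Sigma(\chi)}} \sub \ker(\chi)$ so that $\chi$ deflates to $X_{\cS_{\Sigma(\chi)}}$, and observe that membership in $\Irr(U)_\Sigma$ pins $\Sigma$ down as exactly these maximal support roots, giving exhaustion and disjointness at once.

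The one substantive comment is that the obstacle you anticipate in your final paragraph is vacuous: the paper records, immediately after defining $\cS_\Sigma$, that $\Sigma = \cZ(\cS_\Sigma)$, so the central-support condition is tested precisely on $\Sigma(\chi)$, where it holds by construction, and no central root outside $\Sigma$ can ``sneak in.'' This equality is itself a short root-poset argument: if $\gamma \in \cS_\Sigma$ is strictly below some $\gamma' \in \Sigma$, a saturated chain from $\gamma$ to $\gamma'$ produces a simple root $\alpha_s$ with $\gamma + \alpha_s \in \cS_\Sigma$ and $\alpha_s \le \gamma'$, hence $\alpha_s \in \cS_\Sigma$, so $\gamma \notin \cZ(\cS_\Sigma)$; conversely any $\gamma \in \Sigma$ with $\gamma + \alpha \in \cS_\Sigma$ would violate the antichain property. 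The only other points needing a line in a complete write-up are that every support root lies below a maximal one (finiteness of $\Phi^+$), and that $X_\beta \sub \ker(\chi)$ for all $\beta \in \cK_\Sigma$ implies $X_{\cK_\Sigma} \sub \ker(\chi)$ because $\cK_\Sigma$ is a closed normal subset, so $X_{\cK_\Sigma}$ is the subgroup generated by those root subgroups; both are immediate.
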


\subsection{Cores and the Reduction algorithm} \label{sub:cores} In order to describe the sets of the form $\Irr(U)_{\Sigma}$ for 
an antichain $\Sigma$, we use the following procedure, explained in \cite[Section 3]{GLMP16}, and 
implemented in \cite{GAP4}. Our goal is to reduce from the study of some 
$\Irr(X_{\cS})_{\cZ}$, with $\cZ \subseteq \cZ(\cS)$, to the study of $\Irr(X_{\cS'})_{\cZ'}$, 
with $\cZ' \subseteq \cZ(\cS')$, such that $|\cS'| \lneq |\cS|$. 

We recall the following result.

\begin{prop}[\cite{GLMP16}, Lemma 3.1]\label{prop:small} Let $\cS:=\cP \setminus \cK$ be a quattern, let $\cZ \sub \cZ(\cS)$ and let $\gamma \in \cZ$.
Suppose that there exist $\delta, \beta \in \cS \setminus \{\gamma\}$, such that:
\begin{itemize}
\item[(i)] $\delta+\beta=\gamma$,

\item[(ii)] $\alpha + \alpha' \ne \beta$ for every $\alpha, \alpha' \in \cS$, and 

\item[(iii)] $\delta+\alpha \not\in \cS$ for every $\alpha \in \cS \setminus \{\beta\}$.

\end{itemize}
Let $\cP':=\cP \setminus \{\beta\}$ and $\cK':=\cK \cup \{\delta \}$.  Then we have that $\cS' := \cP' \setminus \cK'$ is a quattern with $X_{\cS'} \cong X_{\cP'}/X_{\cK'}$, and
the map 
\begin{align*}
\Irr(X_{\cS'})_\cZ &  \to \Irr(X_\cS)_\cZ \\
\chi & \mapsto \Ind^\beta \Inf_\delta \chi
\end{align*}
is a bijection of irreducible characters.
\end{prop}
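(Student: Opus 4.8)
The plan is to verify the statement in three stages: first establish that $\cS' = \cP' \setminus \cK'$ is a well-defined quattern with $X_{\cS'} \cong X_{\cP'}/X_{\cK'}$, then construct the claimed bijection, and finally verify it is indeed a bijection. I would begin by checking the pattern/normality conditions. Since $\beta \in \cS$ has the property (ii) that it is not expressible as a sum $\alpha + \alpha'$ of two roots in $\cS$, removing $\beta$ from $\cP$ should leave $\cP'=\cP\setminus\{\beta\}$ closed: if $\alpha,\alpha'\in\cP'$ with $\alpha+\alpha'\in\Phi^+$, then $\alpha+\alpha'\in\cP$ since $\cP$ is a pattern, and $\alpha+\alpha'\neq\beta$ because neither summand lies in $\cK$ (one checks this carefully using that $\cP=\cK\sqcup\cS$ and condition (ii)). Dually, adding $\delta$ to $\cK$ should preserve normality $\cK'\teq\cP'$: the key is condition (iii), which guarantees $\delta+\alpha\notin\cS$ for $\alpha\in\cS\setminus\{\beta\}$, so any sum $\delta+\alpha'$ landing in $\cP'$ must already land in $\cK'$. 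I would treat the possible sums case by case according to whether the second summand is in $\cK$ or in $\cS$, using that $\gamma=\delta+\beta$ is central in $\cS$.

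\textbf{Constructing the map.} Once $\cS'$ is a valid quattern, the isomorphism $X_{\cS'}\cong X_{\cP'}/X_{\cK'}$ is the definition of the quattern group. The content is the character-theoretic map $\chi\mapsto\Ind^\beta\Inf_\delta\chi$. I read this as follows: starting from $\chi\in\Irr(X_{\cS'})_\cZ$, one first inflates $\chi$ along the quotient that kills $X_\delta$ (the notation $\Inf_\delta$), landing in a subgroup of $X_\cS$ of index $q$ obtained by omitting the root $\beta$, and then induces along $\beta$ (the notation $\Ind^\beta$) back up to $X_\cS$. The natural framework is to set $H:=X_{\cS\setminus\{\beta\}}$, observe that $\{\beta\}\teq\cS$ yields $H\teq X_\cS$ with $X_\cS/H\cong X_\beta\cong(\F_q,+)$, and that $X_{\cS'}\cong H/X_\delta$ via the relations. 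So $\Inf_\delta\chi$ is an irreducible character of $H$ with $X_\delta\le\ker$, and $\Ind^\beta$ is ordinary induction $\Ind_H^{X_\cS}$.

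\textbf{Proving bijectivity.} This is where I expect the real work, and the main obstacle is controlling the inertia and fibers of induction. The strategy is to apply the general machinery from Section \ref{ss:characters}. Because $\gamma=\delta+\beta$ is central with $X_\gamma\not\le\ker(\chi)$, the commutator relation \eqref{eq:comrel} shows that conjugation by $X_\beta$ acts on $\Inf_\delta\chi$ by translating the $X_\delta$-eigenvalue, so the inertia group of $\Inf_\delta\chi$ in $X_\cS$ is exactly $H$ (no nontrivial $X_\beta$-conjugate fixes it, precisely because $X_\gamma$ is not in the kernel). By Clifford theory as recalled in the excerpt, $\Ind_{I}^{X_\cS}=\Ind_H^{X_\cS}$ is then a bijection from $\Irr(X_\cS\mid\Inf_\delta\chi)$ onto $\Irr(X_\cS\mid \text{its orbit})$, and the induced character is irreducible. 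The remaining point is that every $\psi\in\Irr(X_\cS)_\cZ$ arises this way: one restricts $\psi$ to $H$, uses that $X_\gamma\not\le\ker(\psi)$ to force $X_\delta$ into the kernel of each irreducible constituent downstairs (again via the bracket $[X_\delta, X_\beta]=X_\gamma$ and centrality), and identifies the constituent with an inflation $\Inf_\delta\chi$. I would verify injectivity by checking distinct $\chi$ give inequivalent inductions, which follows from the restriction-back argument. The delicate step throughout is justifying that $X_\delta\le\ker$ of the relevant downstairs characters; this rests entirely on conditions (i)--(iii) ensuring $\gamma$ is the \emph{only} way $\delta$ interacts nontrivially inside $X_\cS$.
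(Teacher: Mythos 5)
Your proposal takes essentially the same route as the proof the paper relies on (the statement is imported from \cite[Lemma 3.1]{GLMP16}), and it is the same mechanism as the paper's own variant, Proposition \ref{prop:newrl}: pass to the index-$q$ normal subgroup $H=X_{\cS\setminus\{\beta\}}$, inflate over the central image of $X_\delta$, show the inertia group in $X_\cS$ of the inflated character is exactly $H$ via the bracket $[x_\delta(s),x_\beta(t)]\equiv x_\gamma(c\,st) \bmod X_\cK$ together with $X_\gamma\not\subseteq\ker\chi$, and finish with the Clifford correspondence. In the notation of Proposition \ref{prop:newrl}, your argument is precisely the special case $V=X_\cS$, $X=X_\beta$, $Y=X_\delta$, $Z=X_\gamma$, where $X'=Y'=1$, $H'=H$ and $\tY=Y$, applied fiberwise over each central character $\lambda$.

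Three local points deserve correction, none structural. (1) In the surjectivity step you claim that $X_\gamma\not\le\ker(\psi)$ forces $X_\delta$ into the kernel of \emph{each} irreducible constituent of $\psi|_H$; as written this is false and contradicts your own inertia computation. The constituents form a single $X_\beta$-orbit $\{{}^{x_\beta(t)}\theta \mid t\in\F_q\}$, and if $\theta|_{X_\gamma}=\theta(1)\phi_a$ with $a\ne 0$ and $\theta|_{X_\delta}=\theta(1)\phi_b$, then conjugation by $x_\beta(t)$ shifts $b$ by $act$; hence \emph{exactly one} constituent is trivial on $X_\delta$, and that one is the inflation $\Inf_\delta\chi$ you need (the same computation delivers injectivity). (2) ``$\{\beta\}\teq\cS$'' is not the relevant normality statement; what hypothesis (ii) actually yields is $\cP\setminus\{\beta\}\teq\cP$ (a sum with a summand in $\cK$ lands in $\cK$ by normality of $\cK$, while a sum of two roots of $\cS$ avoids $\beta$ by (ii)), and this is what gives $H\teq X_\cS$ with $X_\cS/H\cong X_\beta$. (3) You should make explicit that, modulo $X_\cK$, the bracket $[x_\delta(s),x_\beta(t)]$ reduces to the single term $x_\gamma(c\,st)$ — the possible higher terms $2\delta+\beta=\delta+\gamma$ and $\delta+2\beta=\gamma+\beta$ are excluded from $\cS$ by (iii) and by $\gamma\in\cZ(\cS)$ respectively — and that the structure constant $c$ is nonzero in $\F_q$; the latter is automatic in the simply laced cases this paper needs and is part of the setting of \cite{GLMP16}, but the whole inertia argument collapses without it.
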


The Reduction algorithm has been presented in \cite[Algorithm 3.3]{GLMP16} by applying repeatedly Proposition \ref{prop:small} to $\cS_{\Sigma}$. 
Here, we summarize it to recall tuples of the form $\fC=(\cS, \cZ, \cA, \cL, \cK)$ of positive roots, called \emph{cores}, and sets 
$\fO_1$, $\fO_2$ containing tuples of this form, such that we have a bijection 
$$\Irr(U)_{\Sigma} \longleftrightarrow \bigsqcup_{\fC \in \fO_1}\Irr(X_{\cS})_{\cZ} \sqcup \bigsqcup_{\fC \in \fO_2}\Irr(X_{\cS})_{\cZ}.$$
In particular, $X_{\cS}$ is abelian if and only if $\fC  \in \fO_1$, in which case we call $\fC$ an 
\emph{abelian core}; if $\fC  \in \fO_2$, we call $\fC$ a 
\emph{nonabelian core}. In the sequel we sometimes drop the whole notation $(\cS, \cZ, \cA, \cL, \cK)$ for 
$\fC$, and just refer to the pair $(\cS, \cZ)$ or to the quattern $\cS$. 

The reduction procedure is as follows. At each step of the procedure, we 
examine a tuple $(\cS, \cZ, \cA, \cL, \cK)$, where the set $\cS$ is a quattern with $\cZ \subseteq \cZ(\cS)$, the set 
$\cA$ (respectively $\cL)$ keeps a record of the roots of the form $\beta$ (respectively $\delta)$ 
at each step of the application of Proposition \ref{prop:small}, and the set $\cK$ keeps a record of the roots indexing 
root subgroups in the associated quattern group. The output of this procedure is 
the sets $\fO_1$ and $\fO_2$. We use in the sequel the notation $\Ind^\beta$, $\Inf_\delta$ and $\Ind^\cA$, $\Inf_\cK$ 
defined in \cite[\S2.3]{GLMP16}. 

\begin{itemize}
\item[\textbf{Setup.}] We initialize by putting $\cS=\cS_{\Sigma}$, $\cZ=\cZ(\cS_{\Sigma})$, and $\cA=\cL=\cK=\emptyset$ 
and $\fO_1=\fO_2=\emptyset$. 
\end{itemize}

Let us now assume $\fC=(\cS, \cZ, \cA, \cL, \cK)$ is constructed and taken into examination 
at this step of the procedure.

\begin{itemize}
\item[\textbf{Step 1.}] Let us assume that $\cS=\cZ(\cS)$. Then $X_{\cS}$ is abelian, and we can easily 
parametrize $\Irr(X_{\cS})_{\cZ}$. Namely, assume that $\cZ=\{\alpha_{i_1}, \dots, \alpha_{i_m}\}$ and 
$\cS \setminus \cZ=\{\alpha_{j_1}, \dots, \alpha_{j_n}\}$. Let us define 
$$\chi_{\ub}^{\ua}:=\Ind^{\cA} \Inf_{\cK} \lambda_{\ub}^{\ua},$$
where $\ub=(b_{j_1}, \dots, b_{j_n}) \in \F_q^n$, and $\ua=(a_{i_1}, \dots, a_{i_m}) \in (\F_q^{\times})^m$, 
and $\lambda_{\ub}^{\ua}$ is such that 
$$
\lambda_\ub^\ua(x_{\alpha_{i_k}}(t))=\phi(a_{i_k}t) \qquad  \text{ and } \qquad  \lambda_\ub^\ua(x_{\alpha_{j_h}}(t))=\phi(b_{j_h}t)
$$
for every $k=1, \dots, m$ and $h=1, \dots, n$. Then we have that 
$$\Irr(X_{\cS})_{\cZ}=\{\chi_{\ub}^{\ua} \mid \ub \in \F_q^n, \ua \in (\F_q^{\times})^m\}.$$
We add the element $\fC$ to the set $\fO_1$. 

\item[\textbf{Step 2.}] Let $\cS \ne \cZ(\cS)$, and let $\cR(\cS)$ be the set of pairs of the form $(\beta, \delta)$ 
satisfying the assumptions of Proposition \ref{prop:small}. Assume $\cR(\cS) \ne \emptyset$. We 
choose one particular element $(\beta, \delta) \in \cR(\cS)$, namely we choose $\delta$ to be maximal 
with respect to the linear ordering fixed on $\Phi^+$, and if $(\beta_1, \delta), \dots, (\beta_s, \delta)$ are in 
$\cR(\cS)$, we choose $\beta_i$ minimal with respect to the linear ordering on $\Phi^+$. Let us put 
$\fC':=(\cS', \cZ', \cA', \cL', \cK')$, with 
$$\cS'=\cS \setminus \{\beta, \delta\}, \quad \cZ'=\cZ, \quad \cA'=\cA \cup \{\beta\}, \quad \cL'=\cL \cup \{ \delta\}, \quad 
\cK'= \cK \cup \{\delta\}.$$
Then we have that
$$\Ind^\beta \Inf_\delta: \Irr(X_{\cS'})_\cZ \longrightarrow \Irr(X_\cS)_\cZ$$
is a bijection of irreducible characters. We continue by going back to Step 1 with $\fC'$ in place of $\fC$. 

\item[\textbf{Step 3.}] Let $\fC$ be such that $\cS \ne \cZ(\cS)$ and $\cR(\cS) = \emptyset$. Assume that 
$\cZ(\cS) \setminus (\cZ \cup \cD(\cS)) \ne \emptyset$, and let $\gamma$ be its maximal 
element with respect to the usual linear ordering. It is easy to see, 
as in \cite[Lemma 3.2]{GLMP16}, that 
$$\Irr(X_{\cS})_{\cZ}=\Irr(X_{\cS \setminus \{\gamma\}})_{\cZ} \sqcup \Irr(X_{\cS})_{\cZ \cup \{\gamma\}}.$$
Correspondingly, we continue by going back to Step 1 with each of the tuples 
$$\fC':=(\cS \setminus \{\gamma\}, \cZ, \cA, \cL, \cK \cup \{\gamma\}) \quad \text{and} \quad \fC'':=(\cS , \cZ\cup \{\gamma\}, \cA, \cL, \cK ).$$

\item[\textbf{Step 4.}] Let $\cS$ be such that $\cS \ne \cZ(\cS)$, $\cR(\cS) = \emptyset$ and 
$\cZ(\cS) \setminus (\cZ \cup \cD(\cS)) = \emptyset$. Then $X_{\cS}$ is not abelian 
and it cannot be reduced further using Proposition \ref{prop:small}. We add $\fC$ to $\fO_2$. 
The set $\Irr(X_{\cS})$ has to be investigated with different methods.

\end{itemize}

\smallskip

This algorithm has been implemented in \cite{GAP4} for all groups of rank $7$ or less. 
The numbers of nonabelian cores in each case are recorded in Table \ref{tab:corr}. 
The convention for the choice of $(\beta, \delta)$ as in Step 2 is slightly different from the one in \cite{GLMP16}, hence there are some different numbers of nonabelian cores for ranks $5$ or higher. 

We notice that if $\cD(\cS) \ne \emptyset$, then 
$$X_{\cS}=X_{\cS \setminus \cD(\cS)} \times X_{\cD(\cS)}, \,\,\,\, \text{hence} \,\,\,\, 
\Irr(X_{\cS})_{\cZ}=\Irr(X_{\cS \setminus \cD(\cS)})_{\cZ \setminus \cD(\cS)} \times 
\Irr(X_{\cD(\cS)})_{\cZ \cap \cD(\cS)}, $$
and $\Irr(X_{\cD(\cS)})_{\cZ \cap \cD(\cS)}$ is easily parametrized, as $X_{\cD(\cS)}$ is 
a direct product of its root subgroups. Then we assume in the sequel 
that we have a record of the set $\cD(\cS)$, 
and by slight abuse we 
identify $\cS$ with $\cS\setminus \cD(\cS)$ and $\cZ$ with $\cZ\setminus \cD(\cS)$.  

\begin{small}
\begin{center}
\begin{table}[t]
\begin{tabular}{|c|} 
\hline
$\rY_{r \le 3}$\\
\hline
0\\
\hline
\end{tabular}\quad\begin{tabular}{|c|c|c|c|}
\hline
$\rB_4$ & $\rC_4$ & $\rD_4$ & $\rF_4$\\
\hline
1 & 0 & 1 & 6\\
\hline
\end{tabular}\quad\begin{tabular}{|c|c|c|}
\hline
$\rB_5$ & $\rC_5$ & $\rD_5$\\
\hline
7 & 1 & 6\\
\hline
\end{tabular}\quad\begin{tabular}{|c|c|c|c|}
\hline
$\rB_6$ & $\rC_6$ & $\rD_6$ & $\rE_6$ \\
\hline
36 & 16 & 27 & 105 \\
\hline
\end{tabular}\quad\begin{tabular}{|c|c|c|c|}
\hline
$\rB_7$ & $\rC_7$ & $\rD_7$ & $\rE_7$ \\
\hline
245 & 129 & 160 & 3401 \\
\hline
\end{tabular}
\caption{The number of nonabelian cores in $U$, for $G$ of rank $7$ or less.}
\label{tab:corr}
\end{table}
\end{center}
\end{small}

\section{Isomorphism of nonabelian cores} \label{sec:nonc}

The behavior of the nonabelian cores is determined 
by the relations in the underlying quattern $\cS$. 
We want to record some invariants associated to 
such quatterns.

\begin{definition} \label{def:zmc} Let $\fC=(\cS, \cZ, \cA, \cL, \cK)$ be a nonabelian core. We say that $\fC$ is a \emph{$[z, m, c]$-core} if
\begin{itemize}
\item $|\cZ|=z$,
\item $|\cS| = m$, and
\item there are $c$ triples $(i, j, k)$, with $i < j$ and $\alpha_i, \alpha_j, \alpha_k \in \cS$, such that $\alpha_i+\alpha_j=\alpha_k$.
\end{itemize}
The triple $[z, m, c]$ associated to $\fC$ is called 
the \emph{form} of the nonabelian core $\fC$. 
\end{definition}

The focus of this section is to show that a triple $[z, m, c]$ uniquely determines the isomorphism 
type of a core $\fC$ in simply laced type when the rank of $G$ is $6$ or less. 

\begin{theorem} \label{theo:iso} Let $\Phi^+=\rD_6$ or $\Phi^+=\rE_6$, 
and let $\cS$, $\cS'$ be two quatterns of $\Phi^+$ corresponding 
to nonabelian cores of the form $[z, m, c]$. Then we have that $X_{\cS} \cong X_{\cS'}$.
\end{theorem}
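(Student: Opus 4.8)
My plan is to strip $X_\cS$ down to the purely combinatorial addition structure of $\cS$ and then to check that, over the limited supply of nonabelian cores occurring in rank at most $6$, the three numbers $[z,m,c]$ recover that structure. Concretely, I would first use that $\rD_6$ and $\rE_6$ are simply laced: no positive root equals $2\alpha+\beta$, so the only surviving term of the Chevalley relations \eqref{eq:comrel} is the one with $i=j=1$, and every structure constant is $\pm1$. Passing to the subquotient $X_\cS=X_\cP/X_\cK$ one obtains the generators $x_\alpha(t)$ ($\alpha\in\cS$, $t\in\F_q$) subject to
\begin{equation*}
[x_\alpha(s),x_\beta(r)]=
\begin{cases}
x_{\alpha+\beta}(\eta_{\alpha,\beta}\,sr), & \alpha+\beta\in\cS,\\
1, & \text{otherwise,}
\end{cases}
\end{equation*}
with $\eta_{\alpha,\beta}\in\{\pm1\}$, the second case absorbing both $\alpha+\beta\notin\Phi^+$ and $\alpha+\beta\in\cK$. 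Since $\{x_\alpha(t):\alpha\in\cS\}$ provides a normal form for $X_\cS$, these relations constitute a presentation, obtained from that of $X_\cP$ by setting $x_\gamma=1$ for $\gamma\in\cK$. Hence $X_\cS$ is determined by the \emph{addition datum} $(\cS,\cA_\cS,\eta)$, where $\cA_\cS:=\{(\alpha,\beta,\gamma):\alpha,\beta,\gamma\in\cS,\ \alpha+\beta=\gamma\}$ records the nontrivial commutators (so $c=|\cA_\cS|$, each unordered pair counted once) and $\eta$ records their signs.

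Next I would argue that $\eta$ is immaterial. Rescaling the parametrisations by $x_\alpha(t)\mapsto x_\alpha(c_\alpha t)$ with $c_\alpha\in\F_q^\times$ fixes $\cS$ and $\cA_\cS$ while replacing $\eta_{\alpha,\beta}$ by $\eta_{\alpha,\beta}\,c_\alpha c_\beta c_{\alpha+\beta}^{-1}$. Choosing the $c_\alpha$ along a generating set of $\cS$ and propagating through $\cA_\cS$ shows that any two sign systems attached to the same $(\cS,\cA_\cS)$ give isomorphic groups, the only potential obstruction being a consistency condition at roots admitting several decompositions; since both sign systems descend from genuine Chevalley constants they satisfy the same such conditions, and for the cores at hand this is verified directly. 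Consequently the isomorphism type of $X_\cS$ depends only on $(\cS,\cA_\cS)$, and any bijection $\cS\to\cS'$ carrying $\cA_\cS$ onto $\cA_{\cS'}$ lifts to an isomorphism $X_\cS\cong X_{\cS'}$.

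I would then read off the invariants. After discarding the direct factors indexed by $\cD(\cS)$ one has $\cZ(\cS)=\cZ$ and $Z(X_\cS)=X_{\cZ(\cS)}$, so that $z=|\cZ|=\log_q|Z(X_\cS)|$, $m=|\cS|=\log_q|X_\cS|$, and $c=|\cA_\cS|$ are all invariants of the combinatorial datum $(\cS,\cA_\cS)$. Thus the theorem becomes equivalent to the assertion that, for nonabelian cores of $\rD_6$ and $\rE_6$, the triple $[z,m,c]$ determines $(\cS,\cA_\cS)$ up to an $\cA$-preserving bijection.

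This last assertion is the crux, and it is where the rank hypothesis enters. I would enumerate, via the Reduction algorithm of Section \ref{sub:cores} as implemented in \cite{GAP4}, the $27$ cores for $\rD_6$ and the $105$ for $\rE_6$, compute the form of each, and for every value of $[z,m,c]$ exhibit an explicit $\cA_\cS$-preserving bijection between any two cores sharing that form, thereby producing the group isomorphism through the two previous steps. The main obstacle is precisely that three integers do \emph{not} in general pin down the hypergraph $(\cS,\cA_\cS)$: the rigidity that makes the implication hold is special to rank $\le 6$, and indeed the example in ${\rm UE}_7(q)$ mentioned in the introduction exhibits two cores of equal form with non-isomorphic addition data. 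The essential and most delicate part of the proof is therefore this finite but structured case analysis, which is naturally carried out on the computer.
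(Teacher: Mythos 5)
Your proposal is correct and takes essentially the same route as the paper: Theorem \ref{theo:iso} is likewise proved by a machine-checked case analysis (in \cite{GAP4}) over the $27+105$ cores, in which for each form $[z,m,c]$ one constructs an addition-preserving bijection $\rho$ between the quatterns --- the paper builds $\rho$ layer by layer through the local height classes of the lower central series --- and lifts it to $\varphi(x_\alpha(t))=x_{\rho(\alpha)}(\epsilon t)$ subject to a sign-consistency check across all decompositions $\delta=\alpha'+\beta'$, which is exactly your rescaling/consistency step. The only overstatement, that two Chevalley-derived sign systems on the same addition datum are automatically equivalent (a coboundary-type claim that is not automatic, since the paper's algorithm explicitly allows the lift to return \emph{no solution} and records as a computational fact that the all-`$+$' setup happens to succeed in types $\rD_6$ and $\rE_6$, trivially so for $p=2$), is harmless because you defer to direct verification at precisely that point.
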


The above theorem is proved computationally, following the procedure explained below. 
We start by recording in Table \ref{tab:coretype} the number of occurrences of each form 
of nonabelian cores in types $\rD_6$ and $\rE_6$. Notice that in these cases, for each 
$z$ and $m$ there exists a unique $c$ such that $[z, m, c]$ is a nonabelian core, then 
by Theorem \ref{theo:iso} the knowledge of $|\cS|$ and $|\cZ|$ tells apart the isomorphism 
type of $X_{\cS}$. 

\begin{table}[h]
\begin{center}
\begin{tabular}{|c|c|c|c|c||c|c|}
\cline{1-2} \cline{4-7} 
 \multicolumn{2}{|c|}{$\rD_6$} & \hspace{1.4cm} & \multicolumn{4}{|c|}{$\rE_6$} \\
\cline{1-2} \cline{4-7} 
 $[z, m, c]$ & $\#$ & \qquad & $[z, m, c]$ & $\#$ & $[z, m, c]$ & $\#$
 \\
\cline{1-2}  \cline{4-7} 
 $[3, 9, 6]$	 & $7$ 	 && $[3, 9, 6]$	 & $24$ & $[3, 10, 9]$	 & $45$ 	 \\
\cline{1-2} \cline{4-7} 
 $[3, 10, 9]$	 & $15$ 	 && $[4, 8, 4]$	 & $11$ & $[5, 10, 5]$	 & $1$ 	  \\
\cline{1-2} \cline{4-7} 
$[4, 18, 18]$	 & $1$ 	 && $[5, 12, 8]$	 & $2$ & $[5, 15, 11]$	 & $3$ 	  \\
\cline{1-2} \cline{4-7} 
$[4, 21, 28]$	 & $1$ 	&& $[5, 16, 15]$	 & $1$	&$[5, 20, 25]$	 & $1$   \\
\cline{1-2} \cline{4-7}
$[4, 24, 43]$	 & $1$ 	 && $[5, 21, 30]$	 & $1$  	&  $[6, 12, 6]$	 & $5$   \\
\cline{1-2} \cline{4-7}
$[5, 18, 18]$	 & $1$ 	 && $[6, 13, 7]$	 & $1$ 	& $[6, 14, 8]$	 & $3$   \\
\cline{1-2} \cline{4-7} 
$[6, 19, 20]$	 & $1$ 	 && $[6, 15, 12]$	 & $2$  	& $[6, 16, 12]$	 & $1$   \\
\cline{1-2} \cline{4-7} 
 \multicolumn{1}{c}{}		 & \multicolumn{1}{c}{}	  & \multicolumn{1}{c|}{}& $[6, 17, 17]$	 & $1$ 	& $[7, 15, 9]$	 &$3$   \\
 \cline{4-7}
\end{tabular}

\caption{The numbers of $[z, m, c]$-cores in types $\rD_6$ and $\rE_6$.}
\label{tab:coretype}
\end{center}
\end{table}

We recall the lower central series of $X_\cS$. For all $k\in\Z_{>0}$, the $k$-th member of the lower central series is denoted by $X_\cS^{(k)}$, recursively defined by $X_\cS^{(1)}:=X_\cS$ and $X_\cS^{(k+1)}:=[X_\cS,X_\cS^{(k)}]$. 
Notice that $X_\cS^{(k)}$ is always a quattern group, and $X_\cS^{(k+1)}<X_\cS^{(k)}$ whenever $X_\cS^{(k)}$ is nontrivial. 
We denote by $d$ the \emph{nilpotency class} of $X_\cS$, that is the unique 
positive integer such that $X_{\cS}^{(d)}\ne 1$ and $X_{\cS}^{(d+1)}= 1$.

The isomorphism test for cores is mainly based on the concept of local height, which is defined as follows.

\begin{definition}
\label{defn:loc_ht}
Let $\fC=(\cS, \cZ, \cA, \cL, \cK)$ be a core. 
A root $\alpha\in\cS$ is said to have \textit{local height} $k$ if $X_\alpha\subseteq X_\cS^{(k)}$ and $X_\alpha\not\subseteq X_\cS^{(k+1)}$.
\end{definition}

The roots in $\cS$ are then partitioned into their local height classes as $[\cS_1,\dots, \cS_d ]$, where $\cS_k$ denotes the set of all roots of local height $k$ in $\cS$ for every $k=1, \dots, d$. For any two cores $\fC$ and $\fC'$, it is clear that their quattern groups $X_{\cS}$ and $X_{\cS'}$ are not isomorphic if there exists a $k\ge 1$ such that $|\cS_{k}|\neq |\cS'_{k}|$. 

We say that a root $\alpha \in \cS_k$ is a \emph{lower bound} 
of a root $\delta \in \cS_{k+1}$ if there exists 
a root $\beta\in \cS_1$ such that $\alpha+\beta=\delta$. 
This naturally defines a lattice structure on $\cS$, whose 
suprema are the elements of $\cZ(\cS)$. Notice that 
if $\cS$ is a disconnected lattice with $\cS=\cS_a \sqcup 
\cS_b$, then $X_\cS=X_{\cS_a}\times X_{\cS_b}$; without 
loss of 
generality, we assume from now on that $\cS$ is 
connected. 

To show that the quattern groups of 
two cores 
of the form $[z, m, c]$ corresponding to $\cS$ and $\cS'$ 
are isomorphic, we proceed as follows.
\begin{itemize}
  \item[(a)] We find a lattice homomorphism between $\cS$ and $\cS'$. If it exists, we go to step (b).
  \item[(b)] Let $\rho$ be a lattice homomorphism between $\cS$ and $\cS'$. We try to lift $\rho$ to a group homomorphism $\varphi: X_{\cS} \to X_{\cS'}$ by checking the compatibility of the signs in the commutator relations between 
  root elements.
\end{itemize}

Let $\cS=[\cS_{1},\dots, \cS_d]$ and $\cS'=[\cS'_{1},\dots, \cS'_d]$ be two quatterns corresponding to a nonabelian core of the form $[z,m,c]$, with $|\cS_{k}|=|\cS'_{k}|$ for all $k=1, \dots, d$. For constructing a lattice homomorphism $\rho$ and lifting it up as a group isomorphism $\varphi$, we use the following algorithm. 

\textbf{Setup and base step.} 
For (a), we start with a setup of roots at the first local height layers of $\cS_{1}$ and $\cS_{1}'$, i.e. we 
choose a bijection $\rho$ from $\cS_{1}$ to $\cS_{1}'$. 

For (b), we set  $\varphi(x_\alpha(t)):=x_{\rho(\alpha)}(\pm t)$ for all $\alpha\in\cS_{1}$ and all $t\in\F_q$. This gives a setting for the first local height layer. Notice that the chosen sign `$+$' works in types $\rD_6$ and 
$\rE_6$, instead of trying every choice of 
the signs `$+$' and `$-$'. 

\textbf{Iterative step.} Assume that we constructed the $k$-th local height layer map, and $\cS_{k+1}$ and $\cS'_{k+1}$ are nonempty. We construct the maps $\rho$ for 
$(k+1)$-th local height layers, and $\varphi$ for root groups at $(k+1)$-th local height layers.

For (a), let $\delta\in\cS_{k+1}$, where $\rho(\delta)$ is yet to be defined. We find $\alpha\in\cS_{k}$ and $\beta\in\cS_{1}$ such that $\delta=\alpha+\beta$. If $\rho(\alpha)+\rho(\beta) \not\in \cS'_{k+1}$, then this construction ends here, and we return \emph{no solution} for the choice of $\rho$ from the first local height layer.
If $\rho(\alpha)+\rho(\beta) \in \cS'_{k+1}$, then we define $\rho(\delta):=\rho(\alpha)+\rho(\beta)\in\cS'_{k+1}$, and proceed further. 

For (b), we set $\varphi(x_\delta(t)):=x_{\rho(\delta)}(\epsilon_{\alpha,\beta} t)$ for every $t\in \F_q$, where $\epsilon_{\alpha,\beta}$ is determined as follows. 
If $\varphi(x_\alpha(t))=x_{\rho(\alpha)}(\epsilon_1 t)$, $\varphi(x_\beta(t))=x_{\rho(\beta)}(\epsilon_2 t)$
and $[x_\alpha(1),x_\beta(t)]=x_\delta(\epsilon_3 t)$ for all $t\in\F_q$, then  $\epsilon_{\alpha,\beta}:=\epsilon_1\epsilon_2 \epsilon_3$. Notice that 
in types $\rD_6$ and $\rE_6$ we have $\epsilon_1, \epsilon_2, \epsilon_3 \in\{\pm 1\}$. 

We check the compatibility of this setting  (that is, that the extension of $\varphi$ on $X_\delta$ is well-defined) by checking all other pairs $(\alpha',\beta') \in\cS_{k-i}\times\cS_{1+i}$ for $i=1, \dots, k-1$ such that $\alpha'+\beta'=\delta$. 
If the value $\epsilon_{\alpha,\beta}$ is unique, i.e. there is no pair $(\alpha',\beta')$ giving $\epsilon_{\alpha',\beta'} \ne \epsilon_{\alpha,\beta}$, then the mapping $x_\delta(t)\mapsto x_{\rho(\delta)}(\epsilon_{\alpha,\beta} t)$ is well-defined. Otherwise, this extension to $X_\delta$ is not well-defined, the construction ends here, and we return \emph{no solution} for this choice of $\rho$. 

Notice that when $p=2$, each choice for $\epsilon_{\alpha, \beta}$ is valid, thus the extension is always well-defined whenever we 
find an extension of $\rho$ such that $\rho(\delta)=\rho(\alpha)+\rho(\beta)\in\cS'_{k+1}$ as above.

If there is any other root in $\cS_{k+1}$, then we go back to the iterative step. 

\textbf{Output.} Assume that we constructed the $k$-local height layer map, and 
$\cS_{k+1}=\cS'_{k+1}=\emptyset$. 
We obtain the required group isomorphism $\varphi$ from $X_{\cS}$ onto $X_{\cS'}$.

\vspace{0.5cm} 
By using \cite{GAP4}, we apply the isomorphism test to types $\rD_6$ and $\rE_6$, and we check that every two nonabelian cores corresponding to the same triple $[z, m, c]$ are in fact isomorphic. 
Moreover, as previously remarked, in types $\rD_6$ and $\rE_6$ we only need the setup $\varphi(x_\alpha(t)):=x_{\rho(\alpha)}(t)$ for all $t\in \F_q$ in the 
base step, i.e. we do not have to check the negative sign choices.

However, Theorem \ref{theo:iso} does not generalize to higher rank, as we demonstrate with an example. 

\begin{example}\label{ex:E7} There exists 
a $[3, 9, 6]$-core $\fC$ in 
type $\rE_7$ such that 
\begin{itemize}
\item $\cS=\{\alpha_1,\alpha_5,\alpha_{14},\alpha_{17},\alpha_{20}, \alpha_{21}, \alpha_{22}, \alpha_{26}, \alpha_{37}\}$, 
\item $\cZ=\{\alpha_{21}, \alpha_{26},  \alpha_{37}\}$, 
\item $\cA = \{\alpha_2, \alpha_3, \alpha_6, \alpha_{7}, \alpha_8, \alpha_{10}, \alpha_{12}, \alpha_{13}, \alpha_{15}, \alpha_{24}, \alpha_{29}, \alpha_{31}, \alpha_{35}, \alpha_{36}\}$ and
$\cL = \{\alpha_{18}, \alpha_{19}, \alpha_{23},$  $\alpha_{25}, \alpha_{27}, \alpha_{28}, \alpha_{30}, \alpha_{34}, \alpha_{39}, \alpha_{40}, \alpha_{41}, \alpha_{42}, \alpha_{44}, \alpha_{45}\}$.
\end{itemize}
 Using the methods described in $\S \ref{sub:3dim}$, we obtain that $\Irr(X_{\cS })_{\cZ(\cS)}$ consists of $q^2(q-1)^3$ irreducible characters of degree $q^2$ for every prime $p$. On the other hand, we find in $\rE_7$ another nonabelian 
$[3, 9, 6]$-core whose underlying quattern group $X_{\cS'}$ is conjugate to the 
quattern group arising from the unique nonabelian core lying inside the 
natural quotient $\rU\rD_4(q)$ of $\rU\rE_7(q)$; 
this has been studied in \cite[\S4]{HLM11}. As recorded in Table \ref{tab:coresD6}, we have that $\Irr(X_{\cS' })_{\cZ(\cS')}$ consists instead of $(q-1)^3$ irreducible characters of degree $q^3$ if $p \ge 3$. 

On the one hand, under the action of the Weyl group of $\rE_7$ on the root system we get that $X_{\cS}$ is isomorphic to the quattern group corresponding to  $\Phi^+\setminus\{\alpha_1,\alpha_3,\alpha_5=\alpha_1+\alpha_3\}$ in the root system of type $\rD_4$. On the other hand, it is noted that the 
$[3, 9, 6]$-cores we obtain in types $\rD_6$ and $\rE_6$ are conjugate to the quattern $\{\alpha_1,\dots,\alpha_{10}\} \setminus \{\alpha_3\}$ in type 
$\rD_4$, which does 
correspond to 
the only nonabelian core in type $\rD_4$. 
\end{example}

\section{A reduction process for nonabelian cores}\label{sec:cors}

\subsection{A reduction lemma.} \label{sub:3dim}

A method for the study of nonabelian cores 
is presented in \cite[\S4.2]{GLMP16}. In this subsection 
we present a slight variation of the setup and the 
method, in order to have a direct 
algorithmic application to the study of 
the corresponding quatterns. 

Throughout the rest of this subsection, we assume that $V$ is a finite group, 
$H$ is a subgroup of $V$ with fixed transversal $X$ in $V$, and $Y, Z$ are subgroups of $V$, such that 
\begin{itemize}
\item[(i)] $Z \subseteq Z(V)$, 
\item[(ii)] $Y \subseteq Z(H)$,
\item[(iii)] $Z \cap Y=1$, and
\item[(iv)] $[X, Y] \subseteq Z$. 
\end{itemize}
Moreover, we fix $\lambda \in \Irr(Z)$, and we define 
$$X':=\{x \in X \mid \lambda([x, y])=1 \text{ for all } y \in Y\}, 
\quad Y':=\{y \in Y \mid \lambda([x, y])=1 \text{ for all } x \in X\},$$
and $\hlm:=\Inf_Z^{YZ} \lm$. As $[Y, H]=1$, we have $[Y, V]=[Y, HX]=[Y, X] \subseteq Z$, hence $YZ \teq V$ and $V$ acts on $\Irr(YZ)$.

\begin{lemma} \label{lem:stab}
We have $I_V(\hlm) =HX'$.
\end{lemma}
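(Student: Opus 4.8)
The plan is to compute $I_V(\hlm)$ directly from the definition of the conjugation action, exploiting that $\hlm$ is trivial on $Y$ and restricts to $\lm$ on the central subgroup $Z$. First I would record the structure of $YZ$: since $Z \subseteq Z(V)$ and $Z \cap Y = 1$, the product $YZ$ is the internal direct product $Y \times Z$, so $\hlm$ is well defined by $\hlm(yz) = \lm(z)$ for $y \in Y$, $z \in Z$, being trivial on $Y$. Because $YZ \teq V$, the group $V$ acts on $\Irr(YZ)$ and $I_V(\hlm) = \{g \in V \mid \hlm(g^{-1}ug) = \hlm(u) \text{ for all } u \in YZ\}$.

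Next I would split the stabilizer condition across the two factors. On $Z$ there is nothing to check: $Z \subseteq Z(V)$ forces $g^{-1}zg = z$, so $\hlm$ is automatically fixed on the $Z$-part for every $g$. The whole content therefore lies in the $Y$-part. For $y \in Y$ and $g \in V$ I would write $g^{-1}yg = y[y,g]$ and observe that $[y,g] \in [Y,V] \subseteq Z$, as established in the paragraph preceding the lemma. Hence $g^{-1}yg = y\,z_0$ with $z_0 = [y,g] \in Z$, so that ${}^g\hlm(y) = \hlm(y[y,g]) = \lm([y,g])$. Consequently $g \in I_V(\hlm)$ if and only if $\lm([y,g]) = 1$ for all $y \in Y$.

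The final step is to evaluate $[y,g]$ on the transversal decomposition. Writing $g = hx$ with $h \in H$ and $x \in X$, the commutator identity $[y,hx] = [y,x]\,[y,h]^{x}$ together with $Y \subseteq Z(H)$ (so that $[y,h] = 1$) gives $[y,g] = [y,x]$, independently of $h$. Since $\lm$ is a linear character and $[x,y] = [y,x]^{-1}$, the condition $\lm([y,x]) = 1$ for all $y \in Y$ is exactly the defining condition $x \in X'$. Thus $g = hx$ stabilizes $\hlm$ precisely when $x \in X'$, with $h$ unrestricted; since $V = HX$ with unique factorization, I would conclude $I_V(\hlm) = HX'$.

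I expect no serious obstacle here, as the argument is a short and direct computation. The two points requiring care are the commutator identity $[y,hx] = [y,x][y,h]^x$ combined with $Y \subseteq Z(H)$ to annihilate the $H$-contribution --- this is what makes the stabilizer condition depend only on the transversal component $x$, and it is the crux of the lemma --- and keeping track of the conjugation convention ${}^g\eta(u) = \eta(g^{-1}ug)$ together with the symmetry $\lm([y,x]) = \lm([x,y])^{-1}$, so that the condition matches the definition of $X'$ rather than its inverse. I would also note that the argument is insensitive to the order of the factorization $g = hx$, since $[y,x] \in Z$ is central and hence fixed under conjugation by $h$.
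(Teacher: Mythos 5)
Your proof is correct and follows essentially the same route as the paper's: both reduce the stabilizer condition to $y^{hx}=y^x=y[y,x]$ using $Y\subseteq Z(H)$ and $[Y,V]\subseteq Z$, then match $\lm([y,x])=1$ against the definition of $X'$. Your version merely makes explicit a few points the paper leaves tacit (the direct-product structure $YZ=Y\times Z$, the triviality of the check on the $Z$-part, and the symmetry $\lm([x,y])=\lm([y,x])^{-1}$), all of which are fine.
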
 

\begin{proof}
Let $h \in H$ and $x \in X$. For every $y \in Y$ and $z \in Z$, we have that 
$${}^{hx}\hlm(yz)=\hlm(y^{hx}z)=\hlm(y^{x}z)=\hlm(y[y, x]z)=\hlm(yz)\lm([y, x]).$$
Then ${}^{hx}\hlm=\hlm$ if and only if $\lambda([x, y])=1$ for every $y \in Y$, that is $x \in X'$. 
\end{proof}

Let us define $H':=HX'$, and let us fix a transversal $\tilde X$ of 
$H'$ in $V$. For every $\tilde x \in \tilde X$, let us put $\psi_{\tilde x}:=({}^{\tilde x}\hlm)|_Y \in \Irr(Y)$. 
It is easy to check that $\psi_{\tilde x}(y)=\hlm ( [y, \tilde{x}])$ 
for every $y \in Y$. Let us put 
$W_{\tilde{X}}:=\{\psi_{\tilde x} \mid \tilde x \in \tilde X\}$. 

\begin{lemma}\label{lem:dual}
 $W_{\tilde{X}}$ is a subgroup of $\Irr(Y)$, and 
$$|\tilde{X}|=|W_{\tilde{X}}|=|Y:Y'|.$$
\end{lemma}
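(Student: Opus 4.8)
The plan is to realise $W_{\tX}$ as the image of a single group homomorphism from $V$ into the abelian dual group $\Irr(Y)$, and then to extract all three cardinalities from the first isomorphism theorem together with a nondegeneracy argument. Since $Y \subseteq Z(H)$, the group $Y$ is abelian, so $\Irr(Y)$ is itself an abelian group under pointwise multiplication. First I would record that, by hypothesis (iv) and $[Y,H]=1$, one has $[Y,V]=[Y,HX]=[Y,X]\subseteq Z$, so that $[y,v]\in Z$ for all $y\in Y$ and $v\in V$. This allows me to define $\theta\colon V\to\Irr(Y)$ by $\theta(v)(y):=\lm([y,v])$; in particular $\theta(\tx)=\psi_{\tx}$ for every $\tx\in\tX$, using the recorded identity $\psi_{\tx}(y)=\hlm([y,\tx])=\lm([y,\tx])$ (the last equality because $[y,\tx]\in Z$ and $\hlm|_Z=\lm$).

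Next I would verify that $\theta$ is a homomorphism. From the commutator identity $[y,vw]=[y,w]\,[y,v]^{w}$ and the fact that $[y,v]\in Z\subseteq Z(V)$ is central (so $[y,v]^{w}=[y,v]$ and the two factors commute), one gets $[y,vw]=[y,v][y,w]$ inside the abelian group $Z$, whence $\theta(vw)(y)=\lm([y,v])\lm([y,w])=(\theta(v)\theta(w))(y)$. Thus $\theta$ is a homomorphism and its image $\im\theta$ is a subgroup of $\Irr(Y)$. To identify the kernel I would reuse the computation in the proof of Lemma \ref{lem:stab}: $\theta(v)$ is trivial exactly when $\lm([y,v])=1$ for all $y\in Y$, which that computation shows is equivalent to ${}^{v}\hlm=\hlm$; hence $\ker\theta=I_V(\hlm)=HX'=H'$, and in particular $H'\teq V$. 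Since $\tX$ is a transversal of $H'=\ker\theta$ in $V$ and $\theta$ is constant on each coset of $\ker\theta$, the assignment $\tx\mapsto\theta(\tx)=\psi_{\tx}$ is a bijection from $\tX$ onto $\theta(\tX)=\im\theta$. This already shows $W_{\tX}=\im\theta$ is a subgroup of $\Irr(Y)$ and that $|\tX|=|W_{\tX}|=|V:H'|$.

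It then remains to prove $|V:H'|=|Y:Y'|$, which I expect to be the one step requiring genuine care. I would interpret $(y,v)\mapsto\lm([y,v])$ as a bilinear pairing $Y\times V\to\C^{\times}$: bilinearity in $v$ is the homomorphism property of $\theta$ just established, and bilinearity in $y$ follows identically from $[yy',v]=[y,v][y',v]$ (again using centrality of $[y,v]$). Writing an arbitrary $v$ as $v=hx$ with $h\in H$ and $x\in X$ gives $[y,hx]=[y,x]\,[y,h]^{x}=[y,x]$, since $[y,h]=1$; therefore the left radical $\{y\in Y\mid \lm([y,v])=1\ \text{for all}\ v\in V\}$ equals $\{y\in Y\mid \lm([x,y])=1\ \text{for all}\ x\in X\}=Y'$, while the right radical is $\ker\theta=H'$. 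The descended pairing $Y/Y'\times V/H'\to\C^{\times}$ is thus nondegenerate on both sides, and $V/H'\cong\im\theta$ is abelian; the induced maps $Y/Y'\hookrightarrow\Irr(V/H')$ and $V/H'\hookrightarrow\Irr(Y/Y')$ are injective, forcing $|Y:Y'|=|V:H'|$. Combining this with the previous paragraph yields $|\tX|=|W_{\tX}|=|Y:Y'|$, as required. The main obstacle is precisely this final verification that the two radicals are exactly $Y'$ and $H'$, so that the descended pairing is perfect and the counting closes up.
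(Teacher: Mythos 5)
Your proof is correct, and while it ends at the same three equalities, it is organized differently from the paper's argument. The paper works directly inside $\Irr(Y)$: it checks by hand that $W_{\tX}$ is closed under multiplication (using $[Y,V]\subseteq Z$ and $\lm([h',y])=1$ for $h'\in H'$), that $\psi_{\tx_1}\ne\psi_{\tx_2}$ for distinct transversal elements (so $|\tX|=|W_{\tX}|$), and then computes the common kernel $Y'=\bigcap_{\eta\in W_{\tX}}\ker(\eta)$, concluding $|W_{\tX}|=|Y:Y'|$ from the standard one-sided duality fact that a subgroup of the character group of a finite abelian group has order equal to the index of its common kernel. You instead package everything into the homomorphism $\theta\colon V\to\Irr(Y)$, $\theta(v)(y)=\lm([y,v])$, identify $\ker\theta=H'$ via the computation of Lemma \ref{lem:stab}, and obtain the subgroup property, the injectivity on $\tX$, and $|W_{\tX}|=|V:H'|$ all at once from the first isomorphism theorem; the remaining equality $|V:H'|=|Y:Y'|$ you then get from a two-sided perfect-pairing argument on $Y/Y'\times V/H'$. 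Both proofs rest on the same duality for finite abelian groups, but your organization buys a little extra: it exhibits $H'=I_V(\hlm)$ as the kernel of a homomorphism, hence normal in $V$, and makes explicit why $\psi_{\tx}$ depends only on the coset of $\tx$ modulo $H'$, points the paper leaves implicit. Conversely, the paper's route is shorter because the single count $|W_{\tX}|=|Y:\bigcap_{\eta\in W_{\tX}}\ker(\eta)|$ replaces your double-injection argument; your radical computations (left radical equal to $Y'$ via $[y,hx]=[y,x]$, right radical equal to $H'$) are precisely the verifications compressed into the paper's ``it easily follows,'' so your writeup has no gaps, only more scaffolding.
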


\begin{proof}
From the fact that $[Y, V]\subseteq Z$ and  $\lambda([h', y])=1$ for every $h' \in H'$, it easily follows that $W_{\tX}$ is 
a subgroup of $\Irr(Y)$, and that $\psi_{\tx_1} \ne \psi_{\tx_2}$ if 
$\tx_1 \ne \tx_2$, hence also $|\tX|=|W_{\tX}|$.  
Finally, notice that 
$$Y'=\bigcap_{\tx \in \tX} \psi_{\tx}=\bigcap_{\eta \in W_{ \tX}} \ker(\eta),$$
hence $|W_{\tX}|=|Y:Y'|$.
\end{proof}

From now on we need an extra assumption on $Y'$, 
namely 
\begin{itemize}
\item[(v)] $Y'$ has a complement $\tY$ in $Y$.
\end{itemize}

\begin{prop} \label{prop:newrl}
We have a bijection 
\begin{equation}\label{eq:bijn1}
\Ind_{H'}^{V} \Inf_{H'/\tY}^{H'}: \Irr(H'/\tY \mid \lambda) \longrightarrow \Irr(V \mid \lambda).
\end{equation}
\end{prop}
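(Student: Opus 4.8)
The plan is to decompose the composite into its two natural halves --- a Clifford correspondence for the abelian normal subgroup $YZ \teq V$, and an inflation that kills $\tY$ --- and to check that they fit together. First I would record the structure of $YZ$: since $Z$ is central with $Z\cap Y=1$, we have $YZ=Y\times Z$, so the irreducible characters of $YZ$ over $\lm$ are exactly the extensions $\hat\nu:=\nu\boxtimes\lm$ with $\nu\in\Irr(Y)$ (meaning $\hat\nu|_Y=\nu$, $\hat\nu|_Z=\lm$), and $\hlm=\widehat{1_Y}$. A computation as in the proof of Lemma~\ref{lem:stab} gives ${}^v\hat\nu(y)=\nu(y)\lm([y,v])$ for $y\in Y$, so ${}^v\hat\nu=\hat\nu\cdot({}^v\hlm|_Y)$. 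Hence ${}^v\hat\nu=\hat\nu$ precisely when ${}^v\hlm=\hlm$, that is $I_V(\hat\nu)=I_V(\hlm)=H'$ for \emph{every} $\nu\in\Irr(Y)$: the inertia group is the same for all extensions of $\lm$. Moreover the $V$-orbit of $\hat\nu$ is $\{\widehat{\nu\cdot\eta}\mid\eta\in W_{\tX}\}$, where by Lemma~\ref{lem:dual} the translating group is $W_{\tX}=\Irr(Y/Y')$.

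Next I would run the Clifford correspondence. Since $\lm$ is $V$-invariant and $YZ\teq V$, one has $\Irr(V\mid\lm)=\bigsqcup\Irr(V\mid\hat\nu)$, the union over a set of $V$-orbit representatives among the extensions. Using $W_{\tX}=\Irr(Y/Y')$ together with $Y=Y'\times\tY$ (assumption (v)), the extensions trivial on $\tY$, namely the $\hat\nu$ with $\nu\in\Irr(Y')$, form exactly such a transversal. For each of these, $I_V(\hat\nu)=H'$, so the Clifford correspondence recalled in \S\ref{ss:characters} gives a bijection $\Ind_{H'}^V:\Irr(H'\mid\hat\nu)\to\Irr(V\mid\hat\nu)$. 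Taking the disjoint union over $\nu\in\Irr(Y')$ yields a bijection
$$\Ind_{H'}^V:\bigsqcup_{\nu\in\Irr(Y')}\Irr(H'\mid\hat\nu)\longrightarrow\Irr(V\mid\lm).$$

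It then remains to identify the source with $\Irr(H'/\tY\mid\lm)$ under inflation. Because $H'$ fixes every $\hat\nu$, Clifford's theorem for $YZ\teq H'$ forces $\eta|_{YZ}=\eta(1)\hat\nu$ for $\eta\in\Irr(H'\mid\hat\nu)$; thus each $\eta$ over $\lm$ lies over a \emph{unique} extension, and $\tY\sub\ker\eta$ holds iff that extension is trivial on $\tY$, i.e. iff $\nu\in\Irr(Y')$. This gives $\bigsqcup_{\nu\in\Irr(Y')}\Irr(H'\mid\hat\nu)=\{\eta\in\Irr(H'\mid\lm)\mid\tY\sub\ker\eta\}$, which is precisely the image of $\Inf_{H'/\tY}^{H'}$ on $\Irr(H'/\tY\mid\lm)$. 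Composing with the Clifford bijection finishes the proof.

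The point requiring the most care is the interaction of $\tY$ with $H'$. Although $\tY\sub Z(H)$, one only has $[\tY,X']\sub Z\cap\ker\lm=:N$, which need not vanish, so $\tY$ is not literally normal in $H'$ and the symbol $\Inf_{H'/\tY}^{H'}$ is \emph{a priori} ambiguous. This is harmless: every character over $\lm$ has $N$ in its kernel, so $\tY\sub\ker\eta$ is equivalent to $\tY N\sub\ker\eta$, and $\tY N\teq H'$ is genuine (indeed $H$ centralizes $\tY$, $N\sub Z(V)$, and $[\tY,X']\sub N$). Equivalently one may pass at the outset to $V/N$, in which the image of $\tY$ is central --- hence normal --- in the image of $H'$. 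I would make this reduction explicit before invoking the inflation, after which the two displayed bijections combine to give the stated correspondence.
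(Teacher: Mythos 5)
Your proof is correct, but it is organized differently from the paper's. The paper works with the \emph{small} normal subgroup $Z\tY \teq V$ and the single character $\tlm:=\Inf_Z^{Z\tY}\lm$: by Lemmas \ref{lem:stab} and \ref{lem:dual} the $\tX$-conjugates of $\tlm$ exhaust $\Irr(Z\tY\mid\lm)$, so one application of the Clifford correspondence gives $\Ind_{H'}^V:\Irr(H'\mid\tlm)\to\Irr(V\mid\tlm)$, and the real work is the final containment $\Irr(V\mid\lm)\sub\Irr(V\mid\tlm)$, proved by observing that the $\tX$-orbit of any $\eta\in\Irr(Z\tY\mid\lm)$ under a constituent of $\chi|_{Z\tY}$ already contains $\tlm$. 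You instead work with the \emph{large} normal subgroup $YZ=Y\times Z$ and all extensions $\hat\nu=\nu\boxtimes\lm$ of $\lm$: you show they share the common inertia group $H'$, that $V$-conjugation translates them by $W_{\tX}=\Irr(Y/Y')$, that the extensions trivial on $\tY$ (i.e.\ those with $\nu\in\Irr(Y')$, using assumption (v)) form a transversal of the orbits, and you then apply the Clifford correspondence once per representative and glue. The two arguments consume the same inputs (Lemmas \ref{lem:stab} and \ref{lem:dual} and the splitting $Y=Y'\times\tY$); the paper's version is shorter because only one orbit on $\Irr(Z\tY)$ ever appears, while yours makes the partition of $\Irr(V\mid\lm)$ by characters of $Y'$ explicit, which in effect pre-proves the refinement of Corollary \ref{cor:cornewrl} ($\Irr(V\mid\lambda)\cong\bigsqcup_{\ub}\Irr(V_{\ub}\mid\lambda\otimes\mu_{\ub})$) rather than deriving it afterwards. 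Your closing remark is a genuine repair rather than pedantry: since $[\tY,X']$ need only lie in $\ker\lm$, the subgroup $\tY$ is in general not normal in $H'$, so the symbol $\Inf_{H'/\tY}^{H'}$ must be read as inflation from $H'/(\tY\ker\lm)$; the paper silently does exactly this, as its definition $\overline{V}:=H'/(\tY\ker\lm)$ immediately after the proposition shows, but its identification of $\Irr(H'\mid\tlm)$ with $\{\eta\in\Irr(H')\mid\tY\sub\ker(\eta)\}$ glosses over the point you make explicit.
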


\begin{proof} 
Let $\tlm:=\Inf_{Z}^{Z\tY}\lm$. 
Lemma \ref{lem:stab} 
yields 
$I_V(\tlm)=HX'$ and ${}^{\tx_1}\tlm \ne {}^{\tx_1}\tlm$ if 
$\tx_1 \ne \tx_2$. Hence
\begin{equation}\label{eq:sup}
\Irr(Z\tY \mid \lm)=\{ {}^{\tx}\tlm \mid \tx \in \tX\}.
\end{equation} 
Since $Z\tY \teq V$, by Clifford's theory we have a bijection
$$\Ind_{H'}^{V}: \Irr(H' \mid \tlm) \longrightarrow \Irr(V \mid \tlm).$$
By identifying $\Irr(H' \mid \tlm)$ with $\{\eta \in \Irr(H') \mid \tY \subseteq \ker(\eta) \}$, the above yields the bijection
$$\Ind_{H'}^{V} \Inf_{H'/\tY}^{H'}: \Irr(H'/\tY \mid \lm) \longrightarrow \Irr(V \mid \tlm).$$
We have that $\Irr(V \mid \tlm)=\Irr(V \mid \lm) \cap \Irr(V \mid 1_{\tY}) \subseteq \Irr(V \mid \lm)$. 

The claim is then proved if we show $\Irr(V \mid \lm) \subseteq \Irr(V \mid \tlm)$. If 
$\chi \in \Irr(V \mid \lm)$, we have $\langle \chi|_{Z\tY}, \lm_{Z\tY}\rangle \ne 0$. Let then $\eta \in \Irr(Z\tY \mid \lm)$ such 
that $\chi \in \Irr(V \mid \eta)$. Then 
$$\tlm \in \{{}^{\tx}\tlm \mid \tx \in \tX\}
= \{{}^{\tx}\eta \mid \tx \in \tX\}
\subseteq \{{}^{g}\eta \mid g \in V  \}
=\{\mu \in \Irr(Z\tY) \mid \langle \chi|_{Z\tY}, \mu\rangle \ne 0\},$$
where the first equality holds by Equation 
\eqref{eq:sup}, and the second equality holds by Clifford's theory. Hence $0 \ne 
\langle \chi|_{Z\tY}, \tlm\rangle$, that is, $\chi \in \Irr(V \mid \tlm)$. \end{proof}

\begin{definition}
The $X$ and $Y$ defined at the start of this section 
are called \emph{candidate for an arm} and \emph{candidate for a leg} respectively, and $\tilde{X}$ and $\tilde{Y}$ are called \emph{arm} and \emph{leg} respectively. 
\end{definition}

The terminology of arms and legs is 
motivated by the case $U=\rU\rA_r(q)$, as remarked 
in \cite[Section 6]{HLM16}.

Let $\overline{V}:=H'/(\tY\ker\lm)$. We observe 
that $Y' \subseteq Z(\overline{V})$. Before stating a consequence of 
Proposition \ref{prop:newrl}, we introduce some notation that is 
frequently used in the sequel. 

\begin{definition}\label{def:genroot} Let $\cS$ be a quattern, and let $i_1, \dots, i_m$ be such that the subset 
$\{\alpha_{i_1}, \dots, \alpha_{i_m}\}$ of $\cS$ satisfies $\cZ(\{\alpha_{i_1}, \dots, \alpha_{i_m}\})=\{\alpha_{i_1}, \dots, \alpha_{i_m}\}$. 
For fixed $c_1, \dots, c_m \in \F_q^\times$, we define 
$$x_{i_1, \dots, i_m}^{c_1, \dots, c_m}(t):=x_{i_1}(c_1t)\cdots x_{i_m}(c_mt)\ \ \text{ for all }t\in\F_q.$$
Moreover, we put 
$$X_{i_1, \dots, i_m}^{c_1, \dots, c_m}:=\{x_{i_1, \dots, i_m}^{c_1, \dots, c_m}(t) \mid t \in \F_q\}.$$
\end{definition}

We usually drop the explicit labels $c_1, \dots, c_m \in \F_q^\times$ and we write 
$x_{i_1, \dots, i_m}(t)$ instead of $x_{i_1, \dots, i_m}^{c_1, \dots, c_m}(t)$, and $X_{i_1, \dots, i_m}$ 
instead of $X_{i_1, \dots, i_m}^{c_1, \dots, c_m}$; the choice of $c_1, \dots, c_m$ will be made explicit 
when needed. Notice that 
as $\cZ(\{\alpha_{i_1}, \dots, \alpha_{i_m}\})=\{\alpha_{i_1}, \dots, \alpha_{i_m}\}$, we have 
that $X_{i_1, \dots, i_m} \cong (\F_q, +)$. Moreover, if $\ui:=(i_1, \dots, i_m)$ with $1 \le i_1 < \dots < i_m \le |\cS|$, 
we denote by $x_{\ui}(t)$ the element $x_{i_1, \dots, i_m}(t)$, similarly for $X_{\ui}$. 

\begin{cor} \label{cor:cornewrl}
Assume that $Y'=X_{\ui_1} \times \cdots \times X_{\ui_s}$, where 
$\ui_1, \dots, \ui_s$ are lexicographically ordered. For every $\ub=(b_1, \dots, b_s) \in \F_q^s$, let 
$$
K_{\ub}:=\prod_{j=1, \dots, s \mid  b_j=0} X_{\ui_j} \quad \text{and} \quad V_{\ub}:=\overline{V}/K_{\ub},$$
and let $\mu_{\ub} \in \Irr(Y'/K_{\ub})$ be such that 
$\mu_{\ub}(x_{\ui_j}(t))=\phi(b_jt)$ for every $j=1, \dots, s$. 
Then 
$$\Irr(V \mid \lambda)\cong\bigsqcup_{\ub \in \F_q^s} \Irr(V_{\ub} \mid \lambda \otimes \mu_{\ub}).$$
\end{cor}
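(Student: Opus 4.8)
The plan is to feed Proposition \ref{prop:newrl} into a Clifford-theoretic splitting governed by the central subgroup $Y'$. First I would rewrite the target of the bijection in Proposition \ref{prop:newrl}. That proposition gives $\Irr(V \mid \lm) \cong \Irr(H'/\tY \mid \lm)$ via $\Ind_{H'}^V \Inf_{H'/\tY}^{H'}$. Since $Z \subseteq Z(V) \subseteq Z(H')$, every $\chi \in \Irr(H'/\tY \mid \lm)$ restricts on $Z$ to a multiple of $\lm$, so $\ker \lm \subseteq \ker \chi$; hence $\Irr(H'/\tY \mid \lm) = \Irr\bigl(H'/(\tY\ker\lm) \mid \lm\bigr) = \Irr(\overline V \mid \lm)$. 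This reduces the claim to exhibiting a bijection $\Irr(\overline V \mid \lm) \cong \bigsqcup_{\ub \in \F_q^s} \Irr(V_{\ub} \mid \lm \otimes \mu_{\ub})$.

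Next I would use the standing observation that $Y' \subseteq Z(\overline V)$. Because $Y'$ is central, each $\chi \in \Irr(\overline V)$ satisfies $\chi|_{Y'} = \chi(1)\,\nu$ for a unique $\nu \in \Irr(Y')$, which partitions $\Irr(\overline V)$ along $\Irr(Y')$. As $Y' = X_{\ui_1} \times \cdots \times X_{\ui_s}$ with each $X_{\ui_j} \cong (\F_q,+)$, the parametrization of $\Irr(\F_q)$ recalled in \S\ref{ss:characters} identifies $\Irr(Y')$ precisely with $\{\mu_{\ub} \mid \ub \in \F_q^s\}$ (as $\ub$ ranges over $\F_q^s$, the assignment $x_{\ui_j}(t) \mapsto \phi(b_j t)$ runs bijectively over the characters of $Y'$). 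Intersecting with the condition of lying over $\lm$ then yields the disjoint union $\Irr(\overline V \mid \lm) = \bigsqcup_{\ub \in \F_q^s} \bigl(\Irr(\overline V \mid \lm) \cap \Irr(\overline V \mid \mu_{\ub})\bigr)$, the disjointness being automatic since distinct $\mu_{\ub}$ are distinct central characters of $\overline V$.

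Finally, fixing $\ub$, a character lying over $\mu_{\ub}$ restricts on the central $Y'$ to a multiple of $\mu_{\ub}$, so $K_{\ub} = \ker\mu_{\ub}$ lies in its kernel; inflation therefore puts $\Irr(\overline V \mid \mu_{\ub})$ in bijection with $\Irr(V_{\ub} \mid \mu_{\ub})$, where $\mu_{\ub}$ is now the faithful character of $Y'/K_{\ub}$. It remains to merge ``over $\lm$'' and ``over $\mu_{\ub}$'' into the single condition ``over $\lm \otimes \mu_{\ub}$''. I expect the bookkeeping of the central structure through the successive quotients $\overline V$ and $V_{\ub}$ to be the main obstacle: I would check that $Z \cap Y' \subseteq Z \cap Y = 1$ survives, that the images of $Z$ and $Y'$ in $V_{\ub}$ remain central and intersect trivially, and hence that their product is an internal direct product on which $\lm \otimes \mu_{\ub}$ is a well-defined linear character. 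Granting this, centrality forces the restriction of any $\chi$ to this product to be a multiple of one linear character, so lying over $\lm$ and over $\mu_{\ub}$ simultaneously is equivalent to lying over $\lm \otimes \mu_{\ub}$. Composing the bijections of the three steps — each a bijection of character sets — delivers the displayed disjoint union, completing the proof.
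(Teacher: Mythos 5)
Your argument is correct and is essentially the reasoning the paper leaves implicit: the corollary appears there without a written proof, the intended justification being Proposition \ref{prop:newrl} together with the decomposition of $\Irr(\overline{V} \mid \lambda)$ along the central characters of $Y' \subseteq Z(\overline{V})$ and inflation through $K_{\ub}$, which is exactly your three-step argument (including the bookkeeping check that the images of $Z$ and $Y'$ stay central with trivial intersection, which goes through since $\tY \cap Y' = 1$ and $\ker\lambda \subseteq Z$). One wording slip worth fixing: for $s \ge 2$ one has only $K_{\ub} \subseteq \ker\mu_{\ub}$ rather than equality (e.g.\ $b_1 = b_2 \ne 0$ gives a diagonal subgroup in the kernel), so $\mu_{\ub}$ need not be faithful on $Y'/K_{\ub}$ --- but your argument only uses the containment, so nothing breaks.
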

For example, let us suppose that $Y'$ is 
a diagonal subgroup of $X_{\cJ}$ isomorphic to $\F_q$, that is $s=1$. Then
$$\Irr(V \mid \lambda)\cong\Irr(\overline{V}/Y' \mid  \lambda) \sqcup  \bigsqcup_{\mu \in \Irr(Y')\setminus \{1_{Y'}\}}\Irr(\overline{V} \mid \lambda \otimes \mu).$$

We are interested in applying 
Proposition \ref{prop:newrl} and 
Corollary \ref{cor:cornewrl} to the setting of quattern 
groups. Given a quattern in rank $6$ or less, the 
validity of the assumptions of the following result is easy to check by 
using \cite{GAP4}. 

\begin{cor}\label{cor:plus}
Let $\cS=\cP \setminus \cK$ be a quattern. 
Assume that there exist subsets 
$\cZ$, $\cI$ and $\cJ$ of $\cS$, such that 
\begin{itemize}
\item[(0)] $\cS \setminus \cI$ is a quattern, 
\item[(i)] $\cZ\subseteq \cZ(\cS)$, 
\item[(ii)] $\cJ\subseteq \cZ(\cS \setminus \cI)$, 
\item[(iii)] $\cJ \cap \cZ = \emptyset$, and 
\item[(iv)] $\alpha \in \cI, \beta \in \cJ, \alpha + \beta \in \cS$ 
$\Rightarrow$ $\alpha+\beta \in \cZ$. 
\end{itemize}
Let us put $Z=X_\cZ$, $X=X_\cI$, $Y=X_\cJ$ and $H=X_{\cS \setminus \cI}$, 
and define $X'$, $Y'$ and $H'$ as in Proposition \ref{prop:newrl} and $\overline{V}$ as in Corollary \ref{cor:cornewrl}. Then we have a bijection 
$$\Ind_{\tilde{H}}^G \Inf_{\overline{V}}^{H'}: \Irr(\overline{V} \mid \lambda) \longrightarrow \Irr(X_{\cS} \mid \lambda).$$
\end{cor}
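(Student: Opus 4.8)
The plan is to deduce the statement directly from Proposition \ref{prop:newrl} by checking that, under the identifications $V:=X_\cS$, $H:=X_{\cS\setminus\cI}$, $X:=X_\cI$, $Y:=X_\cJ$ and $Z:=X_\cZ$ already indicated in the statement, the combinatorial hypotheses (0)--(iv) of the corollary force the abstract hypotheses (i)--(v) imposed at the start of \S\ref{sub:3dim}. First I would record that, by (0), the set $\cS\setminus\cI$ is a quattern, so $H=X_{\cS\setminus\cI}$ is a subgroup of $V=X_\cS$, and that unique factorization of the elements of $X_\cS$ in the fixed root ordering exhibits $X=X_\cI$ as a transversal of $H$ in $V$, as required for the setup of \S\ref{sub:3dim}.

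Next I would verify the four structural conditions. Condition (i), $Z\subseteq Z(V)$, follows from $Z(X_\cS)=X_{\cZ(\cS)}$ together with the hypothesis $\cZ\subseteq\cZ(\cS)$; similarly (ii), $Y\subseteq Z(H)$, follows from $Z(X_{\cS\setminus\cI})=X_{\cZ(\cS\setminus\cI)}$ and $\cJ\subseteq\cZ(\cS\setminus\cI)$. Condition (iii), $Z\cap Y=1$, is immediate from $\cZ\cap\cJ=\emptyset$ and unique factorization. For condition (iv), $[X,Y]\subseteq Z$, I would invoke the Chevalley relations \eqref{eq:comrel}: since $\Phi$ is simply laced, for $\alpha\in\cI$ and $\beta\in\cJ$ the commutator $[x_\alpha(s),x_\beta(r)]$ is trivial unless $\alpha+\beta\in\Phi^+$, in which case it equals $x_{\alpha+\beta}(\pm rs)$, with no higher terms $x_{i\alpha+j\beta}$ occurring. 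As $\cP$ is closed we have $\alpha+\beta\in\cP$, so in the quotient $X_\cP/X_\cK$ this element is trivial if $\alpha+\beta\in\cK$ and lies in $X_\cZ=Z$ if $\alpha+\beta\in\cS$, the latter by hypothesis (iv); hence $[X,Y]\subseteq Z$.

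The one hypothesis requiring a separate observation is (v), the existence of a complement $\tY$ of $Y'$ in $Y$. Here I would note that $Y=X_\cJ$ is abelian, since $\cJ\subseteq\cZ(\cS\setminus\cI)$ places $X_\cJ$ inside the abelian group $Z(X_{\cS\setminus\cI})$, and of exponent $p$, being a product of groups isomorphic to $(\F_q,+)$. Thus $Y$ is elementary abelian, that is an $\F_p$-vector space, so every subgroup, in particular $Y'$, admits a complement, and (v) holds automatically with no restriction on $p$.

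With (i)--(v) in hand, Proposition \ref{prop:newrl} supplies the bijection $\Ind_{H'}^{V}\Inf_{H'/\tY}^{H'}\colon\Irr(H'/\tY\mid\lambda)\to\Irr(V\mid\lambda)$. To match the asserted form I would finally identify $\Irr(H'/\tY\mid\lambda)$ with $\Irr(\overline V\mid\lambda)$: any $\eta\in\Irr(H'\mid\lambda)$ restricts on the central subgroup $Z$ to a multiple of $\lambda$, whence $\ker\lambda\subseteq\ker\eta$, and combined with $\tY\subseteq\ker\eta$ this gives $\tY\ker\lambda\subseteq\ker\eta$, so that $\eta$ is inflated from $\overline V=H'/(\tY\ker\lambda)$. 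Recalling $V=X_\cS$ and writing $H'=\tilde H$, this is exactly the claimed bijection. I expect the only genuinely delicate points to be the transversal claim and the commutator computation for (iv); both are clean in the simply laced types $\rD_6$ and $\rE_6$ precisely because no term $i\alpha+j\beta$ with $i+j>2$ arises, and for any concrete quattern they are readily confirmed with \cite{GAP4}.
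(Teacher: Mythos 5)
Your proposal is correct and follows the same route as the paper: the paper's proof likewise consists of observing that hypothesis (0) makes $H=X_{\cS\setminus\cI}$ a subgroup, that (i)--(iv) translate via the quattern formalism of \S\ref{ss:quattern} into the hypotheses of Proposition \ref{prop:newrl}, and that (v) is automatic because $Y$ is elementary abelian. Your write-up merely makes explicit what the paper leaves as ``easy to check'' (the transversal property via unique factorization, the single-term commutator in simply laced type, and the identification of $\Irr(H'/\tY\mid\lambda)$ with $\Irr(\overline V\mid\lambda)$ via $\ker\lambda\subseteq\ker\eta$), all of which is sound.
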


\begin{proof}
By $\S$\ref{ss:quattern}, it is easy to check that (0) is equivalent 
to $H$ being a subgroup of $X_{\cS}$ and each of (i) to (iv) is 
equivalent to the corresponding assumption in Proposition \ref{prop:newrl}. Moreover, (v) is clear as 
$Y$ is elementary abelian. 
\end{proof}

\begin{remark} \label{rem:zuij}
Let us assume that $\cS=\cZ \cup \cI \cup \cJ$, such that assumption (ii) of 
Corollary \ref{cor:plus} is satisfied. Then $\cS \setminus \cI$ is 
automatically a quattern. 
\end{remark}

\subsection{Graphs of nonabelian cores} \label{sub:graph}
Let us fix a nonabelian core $\fC$ corresponding to 
$\cS$ and $\cZ$. In order to check the assumptions 
of Corollary \ref{cor:plus}, 
we define a graph associated to $\fC$. 

\begin{definition} \label{def:graph}
Let $\fC$ be a nonabelian core corresponding 
to $\cS$ and $\cZ$. 
We say that 
$\alpha, \beta \in \cS$ are \emph{$\cZ$-connected}, or just \emph{connected}, if $\alpha+\beta=\gamma$ 
with $\gamma \in \cZ$. 
\end{definition}

With this definition, we regard $\fC$ as a graph whose vertices are the 
elements of $\cS \setminus \cZ$, and there is an edge between 
$\alpha$ and $\beta$ if and only if $\alpha$ and 
$\beta$ are $\cZ$-connected. We then have the usual notion of 
\emph{connected components}. We say that the \emph{heart} $\cH$ of the core $\fC$ is 
the set of roots $\alpha \in \cS \setminus \cZ$ such that $\{ \alpha \}$ is a connected 
component on its own. If $\cH = \emptyset$, we say that the underlying core 
$\fC$ is a \emph{heartless core}. Otherwise, we call $\fC$ a \emph{core with a heart}. 

We now define some important cycles in $\fC$, whose analysis 
allows us to have a systematic procedure to reduce to 
the study of irreducible characters of smaller subquotients of $X_{\cS}$. 

\begin{definition}\label{def:circle}
We say that 
$\cC:=\{\beta_1, \dots, \beta_s\}\subseteq \cS$ with $\beta_1, \dots, \beta_s$ distinct is a 
\emph{circle} in $\cS$ if 
$\beta_i$ is connected 
to 
$\beta_{i+1}$ for $i=1, \dots, s-1$, and $\beta_s$ is connected 
to $\beta_1$.
\end{definition}

\begin{figure}[h]
\begin{center}
\begin{minipage}{0.5\textwidth}

\begin{center}
\begin{tikzpicture}[scale=0.75, crc/.style={circle,draw=black!50,thick,
inner sep=0pt,minimum size=8mm}, td/.style={rectangle,draw=black,thick, dotted,
inner sep=0pt,minimum size=8mm},  rrt/.style={circle,draw=red!50,thick,
inner sep=0pt,minimum size=8mm}, brt/.style={rectangle,draw=blue!50,thick,
inner sep=0pt,minimum size=8mm}, transform shape]
 \node (a) at (0:2.25)   {$\alpha_2$};
 \node (b) at (36:2.25)   {$\alpha_{12}$};
  \node (c) at (72:2.25)   {$\alpha_{5}$};
 \node (d) at (108:2.25)     {$\alpha_9$};
 \node (e) at (144:2.25)    {$\alpha_{11}$};
 \node (f) at (180:2.25)  {$\alpha_8$};
 \node (g) at (216:2.25)   {$\alpha_7$};
  \node (h) at (252:2.25)    {$\alpha_{10}$};
 \node (i) at (288:2.25)     {$\alpha_3$};
 \node (j) at (324:2.25)   {$\alpha_{16}$};
 \node (z) at (0, 0)   {$\alpha_4$};
\draw (a) -- (b) -- (c) -- (d) -- (e) -- (f) -- (g) -- (h) -- (i) -- (j) -- (a) ;
\end{tikzpicture}
\end{center}
\end{minipage}\vline\begin{minipage}{0.5\textwidth}
\begin{center}
\begin{tikzpicture}[scale=0.75, crc/.style={circle,draw=black!50,thick,
inner sep=0pt,minimum size=8mm}, td/.style={rectangle,draw=black,thick, dotted,
inner sep=0pt,minimum size=8mm},  rrt/.style={circle,draw=red!50,thick,
inner sep=0pt,minimum size=8mm}, brt/.style={rectangle,draw=blue!50,thick,
inner sep=0pt,minimum size=8mm}, transform shape]
 \node (a) at (3/2, 1.5/2)   {$\alpha_{15}$};
 \node (b) at (1.5/2, 4.5/2)  {$\alpha_2$};
  \node (c) at (-1.5/2, 4.5/2)  {$\alpha_9$};
 \node (d) at (-3/2, 1.5/2)     {$\alpha_7$};
 \node (e) at (-3/2, -1.5/2)   {$\alpha_{20}$};
  \node (f) at (-1.5/2, -4.5/2) {$\alpha_4$};
 \node (g) at (1.5/2, -4.5/2)   {$\alpha_8$};
  \node (h) at (3/2, -1.5/2)    {$\alpha_{10}$};
 \node (i) at (-5/2, 1.5/2)     {$\alpha_{16}$};
 \node (j) at (-5/2, -1.5/2)    {$\alpha_{12}$};
 \node (k) at (5/2, -1.5/2)    {$\alpha_{11}$};
 \node (l) at (5/2, 1.5/2)    {$\alpha_{14}$};
  \node (z) at (0, 0)   {$\alpha_3$};
\draw (a) -- (b) -- (c) -- (d) -- (e) -- (f) -- (g) -- (h) -- (a);
\draw (c) -- (i) -- (j) -- (f);
\draw (g) -- (k) -- (l) -- (b);
\end{tikzpicture}
\end{center}
\end{minipage}
\end{center}

\caption{The graphs of the core of the form $[5, 16, 15]$ in $\rD_6$ and of 
the core of the form $[6, 19, 20]$ in $\rE_6$.}
\label{fig:exgra}
\end{figure}
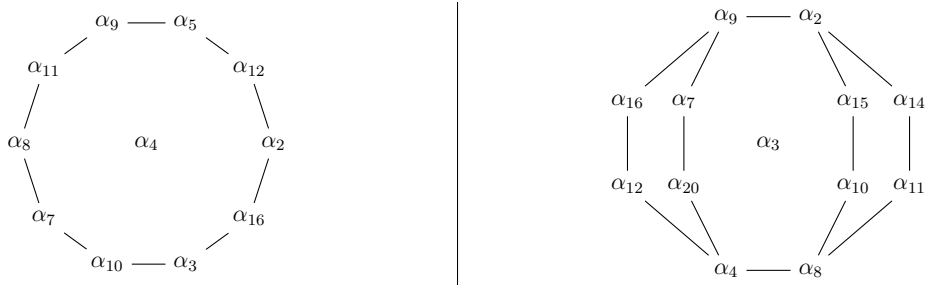

\vspace{-2mm}

The goal of the rest of this section is to construct \emph{unique} 
$\cI$ and $\cJ$ satisfying the assumptions 
of Corollary \ref{cor:plus} for each nonabelian core $\fC$. 

\begin{remark} \label{rem:1circ}
Let us assume that $\cS$ contains just a \emph{single} circle $\cC$. 
Let 
$\cC=\{\beta_1 \dots, \beta_s\}$ in the notation of Definition \ref{def:circle}. 
We start by describing a construction of such $\cI$ and $\cJ$ in this particular case. 
Let us define $\beta_0:=\beta_s$ and $\beta_{s+1}:=\beta_1$. 
We first define $\delta_1$ to be the minimal root in $\cC$ with 
respect to the usual linear ordering on roots. If $\delta_1=\beta_{j_1}$, then we choose 
$\delta_2$ to be the maximum of $\beta_{j_1-1}$ and $\beta_{j_1+1}$. 

Now assume that 
$\delta_i$ is constructed for $2 \le i \le s-1$. Then $\delta_i=\beta_{j_i}$ 
for some $\beta_{j_i}$, hence $\delta_{i-1} \in \{\beta_{j_i-1}, 
\beta_{j_i+1}\}$. 
If $\delta_{i-1}=\beta_{j_i-1}$, we define $\delta_{i+1}:=\beta_{j_i+1}$. Viceversa 
if $\delta_{i-1}=\beta_{j_i+1}$, we define $\delta_{i+1}:=\beta_{j_i-1}$.
Notice that $\delta_{s}$ is connected to 
$\delta_1$. If $s=2m$ is even, then we put 
$\cI:=\{\delta_1, \delta_3, \ldots, \delta_{2m-1}\}$ and 
$\cJ:=\{\delta_2, \delta_4, \ldots, \delta_{2m}\}$. 
If $s=2m+1$ is odd, then we put 
$\cI:=\{\delta_1, \delta_3, \ldots, \delta_{2m-1}, \delta_{2m+1}\}$ and 
$\cJ:=\{\delta_2, \delta_4, \ldots, \delta_{2m}\}$. 

We now check the conditions of Corollary \ref{cor:plus}. Assumptions (i), (iii) and (iv) clearly hold. 
If two roots in $\cJ$ were connected to each other, 
we would have a smaller circle $\cC'$ in $\cC$, which contradicts the assumption of $\cC$ being 
the unique circle in $\cS$. Hence $\cZ(\cJ)=\cJ$, which implies $\cJ 
\subseteq \cZ(\cS \setminus \cI)$. Then (ii) is satisfied, and by Remark \ref{rem:zuij} 
we have that (0) is also satisfied. Therefore, 
$\cI$ and $\cJ$  
satisfy all the assumptions of the corollary.
\end{remark}

\begin{figure}[h] 
\begin{center}
\begin{minipage}{0.5\textwidth}
\begin{center}
\begin{tikzpicture}[scale=0.75, crc/.style={circle,draw=black!50,thick,
inner sep=0pt,minimum size=8mm}, td/.style={rectangle,draw=black,thick, dotted,
inner sep=0pt,minimum size=8mm},  rrt/.style={circle,draw=red!50,thick,
inner sep=0pt,minimum size=8mm}, brt/.style={rectangle,draw=black,thick,
inner sep=0pt,minimum size=8mm}, transform shape]
 \node (a) at (0:2) [td]  {$\alpha_2$};
 \node (b) at (60:2) [brt]  {$\alpha_{14}$};
  \node (c) at (120:2)   [td] {$\alpha_{11}$};
 \node (d) at (180:2) [brt]    {$\alpha_8$};
 \node (e) at (240:2)  [td]  {$\alpha_{10}$};
  \node (f) at (300:2) [brt]   {$\alpha_5$};
\draw (a) -- (b) -- (c) -- (d) -- (e) -- (f) -- (a) ;
\end{tikzpicture}
\end{center}
\end{minipage}\vline\begin{minipage}{0.5\textwidth}
\begin{center}
\begin{tikzpicture}[scale=0.75, crc/.style={circle,draw=black!50,thick,
inner sep=0pt,minimum size=8mm}, td/.style={rectangle,draw=black,thick, dotted,
inner sep=0pt,minimum size=8mm},  rrt/.style={circle,draw=red!50,thick,
inner sep=0pt,minimum size=8mm}, brt/.style={rectangle,draw=black,thick,
inner sep=0pt,minimum size=8mm}, transform shape]
 \node (a) at (0:2) [td]  {$\alpha_1$};
 \node (b) at (72:2) [brt]  {$\alpha_3$};
  \node (c) at (144:2)   [td] {$\alpha_4$};
 \node (d) at (216:2) [brt]    {$\alpha_5$};
 \node (e) at (288:2)  [td]  {$\alpha_{13}$};
\draw (a) -- (b) -- (c) -- (d) -- (e) -- (a) ;
\end{tikzpicture}
\end{center}
\end{minipage}
\end{center}

\caption{The $\cI$ and $\cJ$ constructed in Remark \ref{rem:1circ}, corresponding to roots in a dotted box and roots in a straight box 
respectively, in the cases of the circles corresponding to the $[3, 9, 6]$-core of $\rD_6$ and 
to the $[5, 10, 5]$-core of $\rE_6$.}
\label{fig:covcir}
\end{figure}
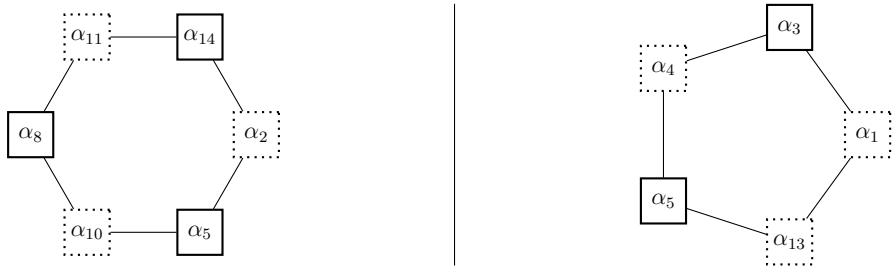

\vspace{-2mm}

We now construct $\cI$ and $\cJ$ for types 
$\rD_6$ and $\rE_6$ in the case when two or more circles occur in $\cS$. 
The number of circles of each nonabelian core in rank $6$ or less is relatively small, 
and we check by \cite{GAP4} that the $\cI$ and $\cJ$ obtained as follows satisfy 
the assumptions of Corollary \ref{cor:plus} for all nonabelian cores, except the $[4, 24, 43]$-core in type $\rD_6$, which 
is examined in full details in \S\ref{sub:chea}. 
For several nonabelian cores 
in type $\rE_7$, the $\cI$ and $\cJ$ constructed in this way do not satisfy the 
conditions of Corollary \ref{cor:plus}. One of the aims of subsequent 
work is to refine the following construction for higher numbers of circles in a nonabelian core. 

\textbf{Setup.} We collect all 
the distinct circles $\cC_1, \dots, \cC_t$ in $\cS$, ordered such that 
\begin{itemize}
\item if $|\cC_i|$ is even and $|\cC_j|$ is odd, then $i<j$;
\item if $|\cC_i|\ne |\cC_j|$ have the same parity, then 
$i<j$ if and only if $|\cC_i|> |\cC_j|$; and 
\item if $|\cC_i|= |\cC_j|$, then 
$i<j$ if and only if 
$\min\{k \mid \alpha_k \in \cC_i \setminus \cC_j\}<
\min\{k \mid \alpha_k \in \cC_j \setminus \cC_i\}$.
\end{itemize}
As circles are determined just by the relations among roots in $\cS$, the 
sets $\cC_1$, \dots, $\cC_t$ can easily be determined by using \cite{GAP4}. 
We decide to look first at the circles with even cardinality 
because the construction of arms and legs of smaller even circles is often 
compatible with the one of bigger even circles, while 
in general we have less compatibility among odd circles.  

\textbf{Base step.} We start by looking at $\cC_1$. 
We define $\cI_1$ and $\cJ_1$ 
as we determined 
$\cI$ and $\cJ$ in Remark \ref{rem:1circ}, 
namely we decide the 
minimum root $\delta$ of $\cC_1$ to be in $\cI_1$, and we alternate adjoining the remaining roots of $\cC_1$ into $\cJ_1$ and $\cI_1$ in the direction 
of the maximum neighbor of $\delta$ in $\cC_1$. 

\textbf{Iterative step.} Let us suppose that 
$\cI_k$ and $\cJ_k$ have been constructed with $k<t$. 
Then there exists another circle $\cC_{k+1}$ after 
$\cC_k$ in the ordering previously fixed. 
We now define two sets $\cI(\cC_{k+1})$ and 
$\cJ(\cC_{k+1})$ such that $\cI(\cC_{k+1}) \cup \cJ(\cC_{k+1})
=\cC_{k+1}$, and then we take advantage of them to construct $\cI_{k+1}$ and $\cJ_{k+1}$. 

If $\cC_{k+1}\cap \cI_{k} \ne \emptyset$, 
then 
we construct 
$\cI(\cC_{k+1})$ and $\cJ(\cC_{k+1})$ 
exactly as we constructed 
$\cI$ and $\cJ$ in Remark \ref{rem:1circ} 
respectively. If 
$\cC_{k+1}\cap \cI_{k} = \emptyset$ and 
$\cC_{k+1}\cap \cJ_{k} \ne \emptyset$, then we 
let $\cC_{k+1}=\{\beta_{1} \dots, \beta_{s}\}$ be as in Definition \ref{def:circle}, with $\beta_{0}:=\beta_{s}$ 
and $\beta_{s+1}:=\beta_{1}$. 
We let $\delta_{1}$ be the maximum root in $\cC_{k+1}\cap \cJ_{k}$
; we have $\delta_{1}=\beta_{ j_1}$ 
for some $j_1 \in \{1, \dots, s\}$. Then we let 
$\delta_{ 2}$ be the minimum of the roots $\beta_{ j_1-1}$ and $\beta_{ j_1+1}$. 
If $2 \le i \le s-1$ and $\delta_{i}$ is constructed such that 
$\delta_{i}=\beta_{ j_i}$, we define $\delta_{i+1}$ to be 
$\beta_{ j_i+1}$ in the case $\delta_{ i-1}=\beta_{j_i-1}$, 
and $\beta_{j_i-1}$ in the case $\delta_{ i-1}=\beta_{ j_i+1}$. 
If $s=2m$ then we put $\cJ(\cC_{k+1}):=\{\delta_{1}, \delta_{3}, \ldots, 
\delta_{2m-1}\}$
and 
$\cI(\cC_{k+1}):=\{\delta_{2}, \delta_{4}, \ldots, 
\delta_{2m}\}$; otherwise $|\cC|=2m+1$ and we put 
$\cJ(\cC_{k+1}):=\{\delta_{1}, \delta_{3}, \ldots, 
\delta_{2m-1}\}$ and $\cI(\cC_{k+1}):=\{\delta_{2m+1}\}\cup \{\delta_{2}, \delta_{4}, \ldots, 
\delta_{2m}\}$. If 
$\cC_{k+1}\cap \cI_{ k} = 
\cC_{k+1}\cap \cJ_{ k} = \emptyset$, 
again we proceed in the same way as Remark \ref{rem:1circ} to construct $\cI(\cC_{k+1})$ 
and $\cJ(\cC_{k+1})$. Finally, we define $\cI_{k+1}:=\cI_k \cup \cI(\cC_{k+1})$ and 
$\cJ_{k+1}:=(\cJ_k \cup \cJ(\cC_{k+1})) \setminus \cI_{k+1}$. 

\textbf{Output.} The sets $\cI_t$ and $\cJ_t$ are constructed. 
We define $\cI:=\cI_t$ and $\cJ:=\cJ_t$. These sets turn out 
to satisfy our desired properties. 

\begin{lem} \label{lem:corcov} Let $\fC$ be a nonabelian core of $\rD_6$ not of the form $[4, 24, 43]$ 
or a nonabelian core of $\rE_6$, corresponding to $\cS$ 
and $\cZ$. Then the sets $\cI$ and $\cJ$ constructed as 
above satisfy the assumptions of Corollary \ref{cor:plus}.
\end{lem}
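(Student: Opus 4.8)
The plan is to verify the five conditions (0)--(iv) of Corollary \ref{cor:plus} for the sets $\cI=\cI_t$ and $\cJ=\cJ_t$ output by the construction, reducing each to a finite combinatorial check on the circles of $\fC$. Two of them come for free. Every root put into $\cI$ or $\cJ$ lies on some circle $\cC_i$, and the circles are subgraphs of the graph of $\fC$, whose vertex set is $\cS\setminus\cZ$; hence $\cI\cup\cJ\subseteq\cS\setminus\cZ$, which gives (iii), while (i) holds since $\cZ\subseteq\cZ(\cS)$ is part of the data of the nonabelian core $\fC$.

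For (iv) I would first dispose of the connected pairs: the base and iterative steps place the two neighbours of each $\delta_i$ on opposite sides, so any edge of a circle joining a root of $\cI$ to a root of $\cJ$ has endpoints summing into $\cZ$ by the definition of connectedness. What is left of (iv) is to exclude $\alpha\in\cI$, $\beta\in\cJ$ with $\alpha+\beta\in\cS\setminus\cZ$, a condition on the addition of roots only. Condition (ii) unwinds similarly: writing $\cZ(\cS\setminus\cI)=\{\gamma\in\cS\setminus\cI\mid \gamma+\alpha\notin\cS\setminus\cI\text{ for all }\alpha\in\cS\setminus\cI\}$ and fixing $\beta\in\cJ$, $\alpha\in\cS\setminus\cI$, the case $\alpha\in\cZ$ is automatic because $\alpha\in\cZ(\cS)$ forces $\beta+\alpha\notin\cS$, while the case of two adjacent $\cJ$-roots within a single circle is excluded by the alternating assignment exactly as in Remark \ref{rem:1circ}. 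Thus (ii), like (iv), reduces to the assertion that a root of $\cJ$ never combines with another root of $\cS\setminus\cI$ to give a root of $\cS\setminus\cZ$.

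Condition (0) I would then read off from (ii): for the heartless cores one has $\cS=\cZ\cup\cI\cup\cJ$, so Remark \ref{rem:zuij} delivers (0) at once; for a core with a nonempty heart $\cH$ one writes $\cS\setminus\cI=\cP\setminus(\cK\cup\cI)$ and checks that $\cK\cup\cI\teq\cP$, again a finite closure condition. At this stage all of (0)--(iv) have been reduced to finitely many statements of the form ``$\alpha+\beta\notin\cS$'' or ``$\alpha+\beta\in\cZ$'' for explicit roots, and by Theorem \ref{theo:iso} there are only $7$ isomorphism types of nonabelian core in $\rD_6$ and $16$ in $\rE_6$, each carrying a short, explicitly computable list of circles. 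I would therefore run the construction and confirm these conditions case by case in \cite{GAP4}.

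The hard part is the interaction between distinct circles. When $\cC_{k+1}$ meets $\cI_k\cup\cJ_k$ the alternation dictated by $\cC_{k+1}$ must stay compatible with the labelling already fixed on the earlier circles, and, more seriously, two roots lying in non-adjacent positions of their respective circles may still sum to a root of $\cS\setminus\cZ$ across the two circles, breaking (iv) or (ii). It is exactly such a cross-circle incompatibility that defeats the generic construction for the $[4,24,43]$-core of $\rD_6$, which is why that core is excluded from the present lemma and treated directly in \S\ref{sub:chea}.
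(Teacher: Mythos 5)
Your proposal is correct and follows essentially the same route as the paper, whose entire proof of Lemma \ref{lem:corcov} is a direct verification in \cite{GAP4} that the constructed $\cI$ and $\cJ$ satisfy conditions (0)--(iv) of Corollary \ref{cor:plus} for every nonabelian core; your preparatory reductions (conditions (i), (iii) for free, the rest reduced to finitely many root-addition checks) are sound scaffolding for that same computation. One small caution: you cannot invoke Theorem \ref{theo:iso} to check only one representative per form $[z,m,c]$, since the construction of $\cI$ and $\cJ$ depends on the explicit roots and the fixed linear ordering (which the lattice isomorphisms of Theorem \ref{theo:iso} do not respect), so the check must be run over all $27+105$ cores --- as the paper does, and as your final sentence effectively proposes anyway.
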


\begin{proof}
This is a straightforward check in \cite{GAP4}. 
\end{proof}

An explicit form of the sets $\cI$ and $\cJ$ 
for each nonabelian core in types $\rD_6$ and $\rE_6$ is 
provided in \cite{LMP}. Through the construction pointed out above, we can 
reduce to smaller subquotients of $X_{\cS}$. The analysis of $\Irr(X_{\cS})$ for a heartless core is straightforward, 
once the sets $X'$ and $Y'$ are 
explicitly known, as such subquotients turn out to be abelian, except in the case of the 
$[6, 16, 12]$-core of $\rE_6$; 
the study of heartless cores is explained in $\S$\ref{sub:heartless}. 
The analysis of nonabelian cores with a heart, detailed in 
$\S$\ref{sub:chea}, is more complicated. In particular, the case of the $[4, 24, 43]$-core of 
$\rD_6$ is not covered by Lemma \ref{lem:corcov}; we treat it by applying directly Proposition \ref{prop:newrl}.

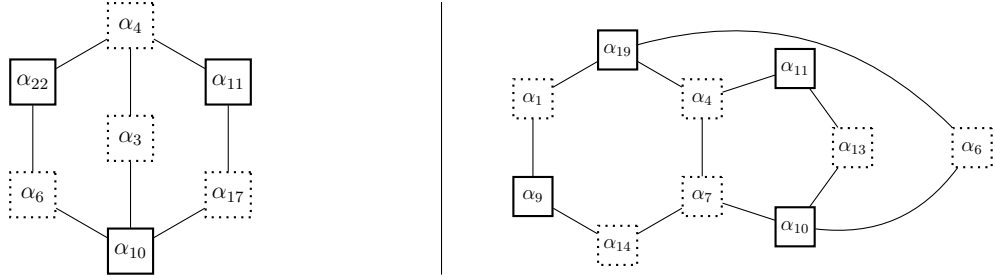
\begin{figure}[h]
\begin{center}
\begin{minipage}{0.5\textwidth}
\begin{center}
\begin{tikzpicture}[scale=0.75, crc/.style={circle,draw=black!50,thick,
inner sep=0pt,minimum size=8mm}, td/.style={rectangle,draw=black,thick, dotted,
inner sep=0pt,minimum size=8mm},  rrt/.style={circle,draw=red!50,thick,
inner sep=0pt,minimum size=8mm}, brt/.style={rectangle,draw=black,thick,
inner sep=0pt,minimum size=8mm}, transform shape]
 \node (a) at (30:2) [brt]  {$\alpha_{11}$};
 \node (b) at (90:2) [td]  {$\alpha_{4}$};
  \node (c) at (150:2)   [brt] {$\alpha_{22}$};
 \node (d) at (210:2) [td]    {$\alpha_6$};
 \node (e) at (270:2)  [brt]  {$\alpha_{10}$};
  \node (f) at (330:2) [td]   {$\alpha_{17}$};
  \node (g) at (0, 0) [td]   {$\alpha_3$};
\draw (a) -- (b) -- (c) -- (d) -- (e) -- (f) -- (a) ;
\draw (e) -- (g) -- (b); 
\end{tikzpicture}
\end{center}
\end{minipage}\vline\begin{minipage}{0.5\textwidth}
\begin{center}
\begin{tikzpicture}[scale=0.65, crc/.style={circle,draw=black!50,thick,
inner sep=0pt,minimum size=8mm}, td/.style={rectangle,draw=black,thick, dotted,
inner sep=0pt,minimum size=8mm},  rrt/.style={circle,draw=red!50,thick,
inner sep=0pt,minimum size=8mm}, brt/.style={rectangle,draw=black,thick,
inner sep=0pt,minimum size=8mm}, transform shape]
 \node (a) at (1.73, 1) [td]  {$\alpha_{4}$};
 \node (b) at (90:2) [brt]  {$\alpha_{19}$};
  \node (c) at (150:2)   [td] {$\alpha_1$};
 \node (d) at (210:2) [brt]    {$\alpha_9$};
 \node (e) at (270:2)  [td]  {$\alpha_{14}$};
  \node (f) at (1.73, -1) [td]   {$\alpha_7$};
 \node (g) at (1.73+1.9, 1+0.62) [brt]  {$\alpha_{11}$};
 \node (h) at (1.73+1.9+1.175, 0) [td]  {$\alpha_{13}$};
 \node (i) at (1.73+1.9, -1-0.62) [brt]  {$\alpha_{10}$};
 \node (j) at (7.25, 0) [td]  {$\alpha_6$};
\draw (a) -- (b) -- (c) -- (d) -- (e) -- (f) -- (a) ;
\draw (a) -- (g) -- (h) -- (i) -- (f);
\draw [bend left] (b) to (j);
\draw [bend right] (i) to (j);
\end{tikzpicture}
\end{center}
\end{minipage}
\end{center}

\caption{The construction of $\cI$ and $\cJ$ for the two nonabelian cores of the 
form $[5, 12, 8]$ and $[6, 16, 12]$ of $\rE_6$, represented as in Figure \ref{fig:covcir}.}
\label{fig:covcor}
\end{figure}

\vspace{-2mm}

\begin{remark}
Let $\Phi$ be a root system of type $\rD_6$ or $\rE_6$. 
Let $\cI=\{i_1, \dots, i_m\}$, $\cJ=\{j_1, \dots, j_\ell\}$  and $\cZ$ satisfy the assumptions of Corollary \ref{cor:plus}. 
The equation $\lambda([y, x])=0$, for $x=x_{i_1}(t_{i_1}) \cdots x_{i_r}(t_{i_r})\in X$ and $y=x_{j_1}(s_{j_1}) \cdots x_{j_\ell}(s_{j_\ell}) \in Y$, where 
$t_{i_1}, \dots, t_{i_r}, 
s_{j_1}, \dots, s_{j_\ell}$ 
are unknown variables over $\F_q$, can be rewritten as 
\begin{equation} \label{eq:1i}
\sum_{h=1}^{\ell}\sum_{k=1}^{m} d_{h, k} s_{j_h} t_{i_k}=0,
\end{equation}
which just contains linear terms 
in the $s_{j_h}$'s and the $t_{i_k}$'s, where each constant $d_{h, k} \in \F_q$ depends on $\cS$ and the choice of the extraspecial pairs in $U$. In 
particular, if $\alpha_{i_h}+\alpha_{j_k} \notin \cS$ then $d_{h, k}=0$. 

It is then easy to work out explicitly $X'$ (respectively $Y'$), namely 
by finding the values of $t_{i_1}, \dots, t_{i_m} \in \F_q$ (respectively $s_{j_1}, \dots, s_{j_\ell} \in \F_q$) 
such that Equation \eqref{eq:1i} holds 
for every $s_{j_1}, \dots, s_{j_\ell} \in \F_q$ (respectively for every $t_{i_1}, \dots, t_{i_m} \in \F_q$). 
\end{remark}

\section{Parametrization of $\Irr(\mathrm{UD}_6(q))$ and $\Irr(\mathrm{UE}_6(q))$} 
\label{sec:para}
We now describe the 
parametrization of the sets $\Irr(X_{\cS})_{\cZ}$ 
that arise from nonabelian cores in types $\rD_6$ 
and $\rE_6$. By Theorem \ref{theo:iso}, it is 
enough to consider just one quattern arising 
from each $[z, m, c]$-core in Table \ref{tab:coretype}. Applying Propositions \ref{prop:HLM} and 
\ref{prop:small}, we can then obtain the corresponding 
parametrization of characters in $\Irr(U)$ using the information stored in $\cA$ 
and $\cL$ and the record of roots in direct products. 

The degrees of the irreducible characters and the numbers of 
irreducible characters of fixed degree arising from a 
nonabelian $[z, m, c]$-core are collected in Table \ref{tab:coresD6} for $\rU\rD_6(q)$ 
and in Table \ref{tab:coresE6} for $\rU\rE_6(q)$, along with the labels of the characters in each $\Irr(X_\cS)_\cZ$. We notice that the parametrization is 
uniform for $p \ge 3$ in type $\rD_6$, and for $p \ge 5$ in type $\rE_6$. For $q=2^f$ in type 
$\rD_6$, and for $q=2^f$ or $q=3^f$ in type $\rE_6$, the parametrization 
is more complicated.

Let us put $v:=q-1$, and let $\cS$ and $\cZ$ correspond to a 
nonabelian core $\fC$. The number 
$|\Irr(X_\cS)_\cZ|$ may not always be expressed 
as a polynomial in $v$ with nonnegative 
coefficients, even when $p$ is a good prime, as 
in the case of $\fC$ of the form $[7, 15, 9]$ in type $\rE_6$ (see Table \ref{tab:coresE6}), 
where $|\Irr(X_\cS)_\cZ|=v^6(v^2+2v-2)$ for corresponding 
$\cS$ and $\cZ$. 
Nevertheless, we notice that for a good prime $p$ in both cases $U=\rU\rD_6(q)$ and 
$U=\rU\rE_6(q)$, the character degrees of $U$ are powers of $q$, and the numbers $k(U, q^d)$ of irreducible characters of $U$ of degree $q^d$ are in $\mathbb{Z}[v]$  for every power $q$ of $p$ and every $d \in \Z_{\ge 0}$. 

We collect the numbers of irreducible 
characters of each fixed degree of $\rU\rD_6(q)$ in Tables \ref{tab:fam3D6} and 
\ref{tab:fam2D6} when $p \ge 3$ and $p=2$  
respectively, and of $\rU\rE_6(q)$ in Tables \ref{tab:fam5E6}, \ref{tab:fam3E6} and \ref{tab:fam2E6} 
when $p\ge 5$, $p=3$ and $p=2$ respectively. We notice that we 
have fractional degrees of the form $q^3/2, \dots, q^{11}/2$ and $q^{10}/4$ in 
$\rU\rD_6(2^f)$, $q^3/2, \dots, q^{15}/2$ 
in $\rU\rE_6(2^f)$ and $q^7/3$ in $\rU\rE_6(3^f)$. 
Observe that the numbers $k(U, D)$ of irreducible characters 
of $U$ of fixed degree $D$ can always be expressed as polynomials in 
$v$ with nonnegative coefficients; such coefficients are in fact 
integers, except in the cases 
$$k(\rU\rE_6(q), q^7) , k(\rU\rE_6(q), q^7/3)\in \mathbb{Z}[v/2] \setminus \mathbb{Z}[v].$$
When $p$ is a good prime, the formulas for each $k(U, q^d)$ coincide with 
the expressions obtained in \cite[Table 3]{GMR15} when $p$ is at least the Coxeter number of $G$. 

We explain the meaning of the labels in Tables \ref{tab:coresD6} and 
\ref{tab:coresE6}. Each character is inflated/induced from an abelian subquotient 
of $U$, which is a product of root subgroups and of diagonal subgroups 
of products of root subgroups as in Definition \ref{def:genroot}; these are indexed, respectively, by a number 
$i$ corresponding to the root $\alpha_i$, or a sequence $i_1, \dots, i_s$ with $i_1 < \dots < i_s$, 
corresponding to the set of roots $\alpha_{i_1}, \dots, \alpha_{i_s}$. The 
subgroups over which we inflate and induce are indexed by $\cA$, $\cK$ and 
the subgroups of the form $\tilde{X}$ and $\tilde{Y}$ at each application of 
Proposition \ref{prop:newrl} and Corollary \ref{cor:cornewrl}. We omit this 
information in the tables; it can be easily retrieved from the character labels 
in Tables \ref{tab:coresD6} and \ref{tab:coresE6}. 

As in $\S$\ref{sub:cores}, the label $\ua$ (respectively $\ub$) corresponds to a tuple of 
elements of $\F_q^\times$ (respectively $\F_q$). The meaning of 
$\ua^*$ in a label of the form $\ua, \ua^*$ is that $\ua^*_j \in \F_q^{\times} \setminus \{f_j(\ua)\}$ for 
every $1 \le j \le \ell$, 
where $\ell$ is a positive integer and each $f_j(\ua)$ is a nonzero expression depending on $\ua$; these are 
explicitly determined in each case. 
A label of 
the form $c^*$ corresponds to a more involved expression indexed by $\F_q^\times$, 
detailed in the case-by-case analysis. Finally, labels of the 
form $d$ index elements of a subset of $(\F_q,+)$ isomorphic to $(\F_p,+)$, and the 
labels $e^1$ and $e^2$ correspond respectively to $(q+1)/2$ and $(q-1)/2$ 
elements in $\F_q$ when $q=3^f$. 

\begin{longtabu}[ht]{|c|c|c|c|c|}
 \hline
Form & Family & Label & Number & Degree \\
\hline
\hline
\endhead
\hline
\endfoot
\hline \caption{Parametrization of 
nonabelian cores in $\rU\rD_6(q)$ for 
every $p$.} \label{tab:coresD6}
\endlastfoot
\multirow{2}{*}{$[3, 9, 6]$} & $\cF_1^{p \ne 2}$ & $\chi^{a_{18}, a_{19}, a_{24}}$ 
& $(q-1)^3$ & $q^3$ \\
\cline{2-5}
& $\cF_1^{p = 2}$ & $\chi_{b_{2, 10, 11}, b_{8, 14, 15}}^{a_{18}, a_{19}, a_{24}}$ 
& $q^2(q-1)^3$ & $q^2$ \\
\hline
\hline
\multirow{3}{*}{$[3, 10, 9]$} & $\cF_2^{p \ne 2}$ & $\chi_{b_2}^{a_{12}, a_{27}, a_{28}}$ 
& $q(q-1)^3$ & $q^3$ \\
\cline{2-5}
& $\cF_2^{1, p = 2}$ & $\chi^{a_{12}, a_{27}, a_{28}}$ 
& $(q-1)^3$ & $q^3$ \\
\cdashline{2-5}
& $\cF_2^{2, p = 2}$ & $\chi_{d_2, d_{1, 3, 24}}^{a_{7, 8, 26}, a_{12}, a_{27}, a_{28}}$ 
& $4(q-1)^4$ & $q^3/2$ \\
\hline
\hline
\multirow{5}{*}{$[4, 18, 18]$} & $\cF_3^{p \ne 2}$ & $\chi_{b_2, b_4}^{a_{16}, a_{21}, a_{22}, a_{28}}$ 
& $q^2(q-1)^4$ & $q^6$ \\
\cline{2-5}
&$\cF_3^{1, p = 2}$ & $\chi^{a_{16}, a_{21}, a_{22}, a_{28}}$ 
& $(q-1)^4$ & $q^6$ \\
\cdashline{2-5}
&$\cF_3^{2, p = 2}$ & $\chi_{d_{1, 14, 15}, d_2}^{a_{7, 18, 19}, a_{16}, a_{21}, a_{22}, a_{28}}$ 
& $4(q-1)^5$ & $q^6/2$ \\
\cdashline{2-5}
&$\cF_3^{3, p = 2}$ & $\chi_{d_{4}, d_{5, 6, 12}}^{a_{10, 11, 17}, a_{16}, a_{21}, a_{22}, a_{28}}$ 
& $4(q-1)^5$ & $q^6/2$ \\
\cdashline{2-5}
&$\cF_3^{4, p = 2}$ & $\chi_{d_{1, 14, 15}, d_2, d_{4}, d_{5, 6, 12}}^{a_{7, 18, 19}, a_{10, 11, 17}, a_{16}, a_{21}, a_{22}, a_{28}}$ 
& $16(q-1)^6$ & $q^6/4$ \\
\hline
\hline
\multirow{4}{*}{$[4, 21, 28]$} & $\cF_4^{p \ne 2}$ & $\chi_{b_{3, 13}, b_8, b_9}^{a_{20}, a_{21}, a_{22}, a_{26}}$ 
& $q^3(q-1)^4$ & $q^7$ \\
\cline{2-5}
&$\cF_4^{1, p = 2}$ & $\chi_{b_{3, 13}}^{a_{20}, a_{21}, a_{22}, a_{26}}$ 
& $q(q-1)^4$ & $q^7$ \\
\cdashline{2-5}
&$\cF_4^{2, p = 2}$ & $\chi_{b_{3, 9, 13}, d_{1, 10, 11}, d_8}^{a_{12, 18, 19}, a_{20}, a_{21}, a_{22}, a_{26}}$ 
& $4q(q-1)^5$ & $q^7/2$ \\
\cdashline{2-5}
&$\cF_4^{3, p = 2}$ & $\chi_{b_{3, 8, 13}, d_{5, 6, 7}, d_9}^{a_{14, 15, 17}, a_{20}, a_{21}, a_{22}, a_{26}}$ 
& $4q(q-1)^5$ & $q^7/2$ \\
\cdashline{2-5}
$[4, 21, 28]$ &$\cF_4^{4, p = 2}$ & $\chi_{d_{1, 5, 6, 7, 10, 11}, b_{3, 8, 9, 13}, d_{8, 9}}^{a_{12, 18, 19}, a_{14, 15, 17}, a_{20}, a_{21}, a_{22}, a_{26}}$ 
& $4q(q-1)^6$ & $q^7/2$ \\
\hline
\hline
\multirow{7}{*}{$[4, 24, 43]$} & $\cF_5^{1, p \ne 2}$ & $\chi_{b_3}^{a_{13}, a_{21}, a_{22}, a_{23}, a_{24}}$
& $q(q-1)^5$ & $q^9$ \\
\cdashline{2-5}
& $\cF_5^{2, p \ne 2}$ & $\chi^{a_{8, 9}, a_{21}, a_{22}, a_{23}, a_{24}}$
& $(q-1)^5$ & $q^9$ \\
\cdashline{2-5}
 & $\cF_5^{3, p \ne 2}$ & $\chi_{b_{2, 4}, b_3}^{a_{21}, a_{22}, a_{23}, a_{24}}$
& $q^2(q-1)^4$ & $q^8$ \\
\cline{2-5}
& $\cF_5^{1, p = 2}$ & $\chi_{d_{1, 5, 6}, b_3, d_{13}}^{a_{17, 18, 19}, a_{21}, a_{22}, a_{23}, a_{24}}$
& $4q(q-1)^5$ & $q^9/2$ \\
\cdashline{2-5}
& $\cF_5^{2, p = 2}$ & $\chi_{b_{2, 4, 7, 10, 11}, b_{12, 14, 15}}^{a_{8, 9}, a_{17, 18, 19}, a_{21}, a_{22}, a_{23}, a_{24}}$ 
& $q^2(q-1)^5$ & $q^8$ \\
\cdashline{2-5}
& $\cF_5^{3, p = 2}$ & $\chi_{b_{2, 4}}^{a_{21}, a_{22}, a_{23}, a_{24}}$
& $q(q-1)^4$ & $q^8$ \\
\cdashline{2-5}
& $\cF_5^{4, p = 2}$ & $\chi_{b_{2, 4}, d_3, d_{7, 11}}^{a_{12, 14, 15}, a_{21}, a_{22}, a_{23}, a_{24}}$ 
& $4q(q-1)^5$ & $q^8/2$ \\
\hline
\hline
\multirow{4}{*}{$[5, 18, 18]$} & $\cF_6^{p \ne 2}$ & $\chi_{b_3}^{a_{17}, a_{18}, a_{19}, a_{24}, a_{25}}$ 
& $q(q-1)^5$ & $q^6$ \\
\cline{2-5}
& $\cF_6^{1, p = 2}$ & $\chi_{b_{2, 4, 7, 10, 11, 16}, b_{9, 12, 20}}^{c_{8, 14, 15}^*, a_{17}, a_{18}, a_{19}, a_{24}, a_{25}}$ 
& $q^2(q-1)^6$ & $q^5$ \\
\cdashline{2-5}
& $\cF_6^{2, p = 2}$ & $\chi_{b_{2, 4, 7, 10, 11, 16}}^{a_{17}, a_{18}, a_{19}, a_{24}, a_{25}}$ 
& $q(q-1)^5$ & $q^5$ \\
\cdashline{2-5}
& $\cF_6^{3, p = 2}$ & $\chi_{b_{2, 4, 7, 10, 11, 16}, d_{2, 4, 7, 10, 11, 16}, d_3}^{c_{9, 12, 20}^*, a_{17}, a_{18}, a_{19}, a_{24}, a_{25}}$ 
& $4q(q-1)^6$ & $q^5/2$ \\
\hline
\hline
\multirow{4}{*}{$[6, 19, 20]$} & $\cF_7^{1, p \ne 2}$ & $\chi_{b_3}^{a_{13}, a_{17}, a_{18}, a_{19}, a_{24}, a_{25}^*}$ 
& $q(q-1)^5(q-2)$ & $q^6$ \\
\cdashline{2-5}
& $\cF_7^{2, p \ne 2}$ & $\chi^{a_{8, 9, 12, 14, 15, 20}, a_{13}, a_{17}, a_{18}, a_{19}, a_{24}}$ 
& $(q-1)^6$ & $q^6$ \\
\cdashline{2-5}
& $\cF_7^{3, p \ne 2}$ & $\chi_{b_{2, 4, 7, 10, 11, 16}, b_3}^{a_{13}, a_{17}, a_{18}, a_{19}, a_{24}}$ 
& $q^2(q-1)^5$ & $q^5$ \\
\cline{2-5}
& $\cF_7^{p = 2}$ & $\chi_{b_3}^{a_{13}, a_{17}, a_{18}, a_{19}, a_{24}, a_{25}}$ 
& $q(q-1)^6$ & $q^6$ \\
\end{longtabu}

\begin{longtabu}[ht]{|c|c|c|c|c|}
 \hline
Form & Family & Label & Number & Degree \\
\hline
\hline
\endhead
\hline
\endfoot
\hline \caption{Parametrization of 
nonabelian cores in $\rU\rE_6(q)$ for 
every $p$.} \label{tab:coresE6}
\endlastfoot
\multirow{2}{*}{$[3, 9, 6]$} & $\cF_1^{p \ne 2}$ & $\chi^{a_{23}, a_{29}, a_{31}}$ 
& $(q-1)^3$ & $q^3$ \\
\cline{2-5}
& $\cF_1^{p = 2}$ & $\chi_{b_{7, 11, 19}, b_{12, 16, 24}}^{a_{23}, a_{29}, a_{31}}$ 
& $q^2(q-1)^3$ & $q^2$ \\
\hline
\hline
\multirow{3}{*}{$[3, 10, 9]$} & $\cF_2^{p \ne 2}$ & $\chi_{b_4}^{a_{23}, a_{29}, a_{31}}$ 
& $q(q-1)^3$ & $q^3$ \\
\cline{2-5}
& $\cF_2^{1, p = 2}$ & $\chi^{a_{23}, a_{29}, a_{31}}$ 
& $(q-1)^3$ & $q^3$ \\
\cdashline{2-5}
& $\cF_2^{2, p = 2}$ & $\chi_{d_4, d_{7, 11, 19}}^{a_{12, 16, 24}, a_{23}, a_{29}, a_{31}}$ 
& $4(q-1)^4$ & $q^3/2$ \\
\hline
\hline
\multirow{2}{*}{$[4, 8, 4]$} & $\cF_3^1$ & $\chi^{a_8, a_{12}, a_{14}, a_{18}^*}$ 
& $(q-1)^3(q-2)$ & $q^2$ \\
\cdashline{2-5}
& $\cF_3^2$ & $\chi_{b_{2, 7}, b_{4, 10}}^{a_8, a_{12}, a_{14}}$ 
& $q^2(q-1)^3$ & $q$ \\
\hline
\hline
$[5, 10, 5]$ & $\cF_4$ & $\chi_{b_{1, 4, 13}}^{a_7, a_9, a_{10}, a_{17}, a_{19}}$ 
& $q(q-1)^5$ & $q^2$ \\
\hline
\hline
$[5, 12, 8]$ & $\cF_5$ & $\chi_{b_{3, 4, 6, 17}}^{a_9, a_{15}, a_{16}, a_{26}, a_{27}}$ 
& $q(q-1)^5$ & $q^3$ \\
\hline
\hline
\multirow{2}{*}{$[5, 15, 11]$} & $\cF_6^{p \ne 3}$ & $\chi^{a_{12}, a_{16}, a_{22}, a_{24}, a_{25}}$ 
& $(q-1)^5$ & $q^5$ \\
\cline{2-5}
& $\cF_6^{p =3}$ & $\chi_{b_{1, 4, 6, 13, 14}, b_{7, 9, 10, 11, 19}}^{a_{12}, a_{16}, a_{22}, a_{24}, a_{25}}$ 
& $q^2(q-1)^5$ & $q^4$ \\
\hline
\hline
\multirow{3}{*}{$[5, 16, 15]$} & $\cF_7^{p \ne 2}$ & $\chi_{b_4}^{a_{15}, a_{17}, a_{18}, a_{20}, a_{21}}$ 
& $q(q-1)^5$ & $q^5$ \\
\cline{2-5}
& $\cF_7^{1, p = 2}$ & $\chi^{a_{15}, a_{17}, a_{18}, a_{20}, a_{21}}$ 
& $(q-1)^5$ & $q^5$ \\
\cdashline{2-5}
& $\cF_7^{2, p = 2}$ & $\chi_{d_4, d_{2, 3, 5, 7, 11}}^{a_{8, 9, 10, 12, 16}, a_{15}, a_{17}, a_{18}, a_{20}, a_{21}}$ 
& $4(q-1)^6$ & $q^5/2$ \\
\hline
\hline
\multirow{2}{*}{$[5, 20, 25]$} & $\cF_8^{p \ne 3}$ & $\chi_{b_{2, 3, 5}}^{a_{17}, a_{18}, a_{20}, a_{21}, a_{24}}$ 
& $q(q-1)^5$ & $q^7$ \\
\cline{2-5}
& $\cF_8^{1, p = 3}$ & $\chi^{a_{7, 11, 13, 14, 15}, a_{17}, a_{18}, a_{20}, a_{21}, a_{24}}$ 
& $(q-1)^6$ & $q^7$ \\
\cdashline{2-5}
$[5, 20, 25]$ & $\cF_8^{2, p = 3}$ & $\chi_{b_{1, 6, 8, 9, 10, 12, 16}, b_{2, 3, 5}}^{ a_{17}, a_{18}, a_{20}, a_{21}, a_{24}}$ 
& $q^2(q-1)^5$ & $q^6$ \\
\hline
\hline
\multirow{4}{*}{$[5, 21, 30]$} & $\cF_9^{p \ne 3}$ & $\chi_{b_4, b_{8, 9, 10}}^{a_{17}, a_{18}, a_{19},  a_{20}, a_{21}}$ 
& $q^2(q-1)^5$ & $q^7$ \\
\cline{2-5}
& $\cF_9^{1, p = 3}$ & $\chi_{b_4}^{a_{12, 13, 14, 15, 16}, a_{17}, a_{18}, a_{19},  a_{20}, a_{21}}$ 
& $q(q-1)^6$ & $q^7$ \\
\cdashline{2-5}
 & $\cF_9^{2, p = 3}$ & $\chi^{ e_{8, 9, 10}^1, a_{17}, a_{18}, a_{19},  a_{20}, a_{21}}$ 
& $(q-1)^5(q+1)/2$ & $q^7$ \\
\cdashline{2-5}
& $\cF_9^{3, p = 3}$ & $\chi_{d_{1, 2, 3, 5, 6, 7, 11}, d_{4}}^{ e_{8, 9, 10}^2, a_{17}, a_{18}, a_{19},  a_{20}, a_{21}}$ 
& $9(q-1)^6/2$ & $q^7/3$ \\
\hline
\hline
\multirow{2}{*}{$[6, 12, 6]$} & $\cF_{10}^1$ & $\chi^{a_8, a_{10}, a_{12}, a_{15}, a_{23}, a_{25}^*}$ 
& $(q-1)^5(q-2)$ & $q^3$ \\
\cdashline{2-5}
& $\cF_{10}^2$ & $\chi_{b_{1, 2, 5}, b_{4, 9, 21}}^{a_8, a_{10}, a_{12}, a_{15}, a_{23}}$ 
& $q^2(q-1)^5$ & $q^2$ \\
\hline
\hline
$[6, 13, 7]$ & $\cF_{11}$ & $\chi_{b_{1, 5, 14, 24}}^{a_7, a_{11}, a_{18}, a_{19}, a_{26}, a_{28}}$ 
& $q(q-1)^6$ & $q^3$ \\
\hline
\hline
\multirow{2}{*}{$[6, 14, 8]$} & $\cF_{12}^1$ & $\chi^{a_{12}, a_{13}, a_{15}, a_{16}, a_{20}, a_{22}^*}$ 
& $(q-1)^5(q-2)$ & $q^4$ \\
\cdashline{2-5}
& $\cF_{12}^2$ & $\chi_{b_{3, 6, 7, 11}, b_{4, 8, 10, 14}}^{a_{12}, a_{13}, a_{15}, a_{16}, a_{20}}$ 
& $q^2(q-1)^5$ & $q^3$ \\
\hline
\hline
\multirow{3}{*}{$[6, 15, 12]$} & $\cF_{13}^{p \ne 2}$ & $\chi_{b_{4, 10}}^{a_8, a_9, a_{15}, a_{20}, a_{22}, a_{23}}$ 
& $q(q-1)^6$ & $q^4$ \\
\cline{2-5}
& $\cF_{13}^{1, p = 2}$ & $\chi^{a_8, a_9, a_{15}, a_{20}, a_{22}, a_{23}}$ 
& $(q-1)^6$ & $q^4$ \\
\cdashline{2-5}
& $\cF_{13}^{2, p = 2}$ & $\chi_{d_{2, 3, 6, 7}, d_{4, 10}}^{a_8, a_9, c_{14, 16, 18}^*, a_{15}, a_{20}, a_{22}, a_{23}}$ 
& $4(q-1)^7$ & $q^4/2$ \\
\hline
\hline
\multirow{2}{*}{$[6, 16, 12]$} & $\cF_{14}^{p \ne 3}$ & $\chi^{a_{12}, a_{16}, a_{18}, a_{22}, a_{24}, a_{25}}$ 
& $(q-1)^6$ & $q^5$ \\
\cline{2-5}
& $\cF_{14}^{p = 3}$ & $\chi_{b_{1, 6, 7, 14}, b_{4, 6, 7, 13}}^{a_{12}, a_{16}, a_{18}, a_{22}, a_{24}, a_{25}}$ 
& $q^2(q-1)^6$ & $q^4$ \\
\hline
\hline
\multirow{7}{*}{$[6, 17, 17]$}& $\cF_{15}^{1, p \ne 2}$ & $\chi_{b_4}^{a_{13}, a_{14}, a_{15}, a_{17}, a_{20}, a_{23}^*}$ 
& $q(q-1)^5(q-2)$ & $q^5$ \\
\cdashline{2-5}
& $\cF_{15}^{2, p \ne 2}$ & $\chi^{a_{8, 9, 10, 12, 16}, a_{13}, a_{14}, a_{15}, a_{17}, a_{20}}$ 
& $(q-1)^6$ & $q^5$ \\
\cdashline{2-5}
& $\cF_{15}^{3, p \ne 2}$ & $\chi_{b_{2, 3, 5, 7, 11}, b_4}^{a_{13}, a_{14}, a_{15}, a_{17}, a_{20}}$ 
& $q^2(q-1)^5$ & $q^4$ \\
\cline{2-5}
& $\cF_{15}^{1, p = 2}$ & $\chi^{a_{8, 9, 10, 12, 16}, a_{13}, a_{14}, a_{15}, a_{17}, a_{20}}$ 
& $(q-1)^6$ & $q^5$ \\
\cdashline{2-5}
& $\cF_{15}^{2, p = 2}$ & $\chi^{a_{13}, a_{14}, a_{15}, a_{17}, a_{20}, a_{23}^*}$ 
& $(q-1)^5(q-2)$ & $q^5$ \\
\cdashline{2-5}
& $\cF_{15}^{3, p = 2}$ & $\chi_{d_{2, 3, 5, 7, 11}, d_4}^{a_{8, 9, 10, 12, 16}, a_{13}, a_{14}, a_{15}, a_{17}, a_{20}, a_{23}^*}$ 
& $4(q-1)^6(q-2)$ & $q^5/2$ \\
\cdashline{2-5}
& $\cF_{15}^{4, p = 2}$ & $\chi_{b_{2, 3, 5, 7, 11}, b_4}^{a_{13}, a_{14}, a_{15}, a_{17}, a_{20}}$ 
& $q^2(q-1)^5$ & $q^4$ \\
\hline
\hline
\multirow{3}{*}{$[7, 15, 9]$} & $\cF_{16}^1$ & $\chi^{a_9, a_{12} a_{13}, a_{15}, a_{16}, a_{20}^*, a_{22}^*}$ 
& $(q-1)^5(q-2)^2$ & $q^4$ \\
\cdashline{2-5}
& $\cF_{16}^2$ & $\chi^{a_9, a_{12} a_{13}, a_{15}, a_{16}, a_{22}}$ 
& $(q-1)^6$ & $q^4$ \\
\cdashline{2-5}
& $\cF_{16}^3$ & $\chi_{b_{3, 6, 7, 11}, b_{4, 8, 10, 14}}^{a_9, a_{12} a_{13}, a_{15}, a_{16}, a_{20}^*}$ 
& $q^2(q-1)^5(q-2)$ & $q^3$ \\
\end{longtabu}

\subsection{Heartless cores} \label{sub:heartless} 
Recall that the heart of a nonabelian core consists of the roots $\alpha$ such that 
$\{\alpha\}$ is a connected component in the associated graph defined in \S\ref{sub:graph}.
Among the cores of $\rD_6$ (respectively $\rE_6$) listed in Table \ref{tab:coresD6} (respectively Table \ref{tab:coresE6}), 
the following forms are heartless, 
$$[3, 9, 6], \quad
[4, 8, 4], \quad
[5, 10, 5], \quad
[5, 12, 8], \quad
[5, 15, 11], $$
$$[6, 12, 6], \quad
[6, 13, 7], \quad
[6, 14, 8],\quad
[6, 16, 12], \quad
[7, 15, 9].$$
In Proposition \ref{core[6,16,12]} we study in detail the $[6, 16, 12]$-core, whose 
analysis departs from the uniform treatment of the remaining heartless cores which we discuss 
first. 

Now let $(\cS, \cZ, \cA, \cL, \cK)$ be a nonabelian core of one of the forms listed above, but not $[6,16,12]$. 
Then we have that $\cS=\cZ \cup \cI \cup \cJ$, for $\cI=\{i_1, \dots, i_k\}$ and $\cJ=\{j_1, \dots, j_h\}$ 
as defined in Section \ref{sec:cors}, and the study of Equation \eqref{eq:1i} yields $X'=1$ or $X'=\{x_{i_1, \dots, i_k}(t) \mid t \in \F_q\}$, 
and $Y'=1$ or $Y'=\{x_{j_1, \dots, j_h}(s) \mid s \in \F_q\}$ for some $x_{i_1, \dots, i_k}(t)$ and $x_{j_1, \dots, j_h}(s)$ 
as in Definition \ref{def:genroot}. 
Hence by Proposition \ref{prop:newrl} and Equation \eqref{eq:decomp}, we have that 
$$\Irr(X_{\cS})_{\cZ}=\bigsqcup_{\lambda \in \Irr(X_{\cZ})} \Irr(X'Y'Z \mid \lambda),$$
where $Z=X_{\cZ}/(\ker \lambda)$. 

Then we have 
$$\Irr(X_{\cS})_{\cZ}=\{\chi_{\ub}^{\ua} \mid \ub \in \F_q^\delta, \ua \in (\F_q^\times)^{|\cZ|}\} \,\, \text{or} \,\, \Irr(X_{\cS})_{\cZ}=\{\chi_{\ub}^{\ua, \ua^*} \mid \ub \in \F_q^\delta, \ua \in (\F_q^\times)^{|\cZ|-\ell}, \ua^* \in S\},$$
where $\delta=\log_q(|X'Y'|) \in \{0, 1, 2\}$, and $\ua$ is indexed by root indices of $X_{\cZ}$ and $\ub$ 
is indexed by the $i_1, \dots, i_k$ or $j_1, \dots, j_h$ 
whenever $X' \ne 1$ or $Y' \ne 1$, and 
$$S:=(\F_q^\times \setminus \{f_1(\ua)\}) \times \cdots \times (\F_q^\times \setminus \{f_{\ell}(\ua)\})$$
for an integer $\ell \in \{0, 1, 2\}$ and fractional polynomial expressions $f_1, \ldots, f_{\ell}$ that are 
explicitly determined. The graph structure of a heartless core $\fC$ is easily determined by $\cS$. 

The cores of the form $[3, 9, 6]$ in both $\rD_6$ and $\rE_6$ are isomorphic to one of the 
nonabelian cores of $\rF_4$ determined in \cite[\S4.3]{GLMP16}. 
As done in \cite{GLMP16}, we include no further details here for the straightforward analysis of heartless cores, and we refer 
to \cite{LMP} for information on the sets $\cS$, $\cZ$, $\cA$ and $\cL$ 
and the form of Equation \eqref{eq:1i} in each case. 
The labels of the sets $\Irr(X_{\cS})_{\cZ}$ are collected in Tables \ref{tab:coresD6} and 
\ref{tab:coresE6}. 

Recall that there is a unique nonabelian core of $\rE_6$ of the form $[6, 16, 12]$. In this case, 
\begin{itemize}
\item $\cS=\{\alpha_{1}, \alpha_{4}, \alpha_{6}, \alpha_{7}, \alpha_{9}, \alpha_{10}, \alpha_{11}, \alpha_{12}, \alpha_{13}, \alpha_{14}, \alpha_{16}, \alpha_{18}, \alpha_{19}, \alpha_{22}, \alpha_{24}, \alpha_{25}\}$, 
\item $\cZ=\{\alpha_{12}, \alpha_{16}, \alpha_{18}, \alpha_{22}, \alpha_{24}, \alpha_{25}\}$, 
\item $\cA=\{\alpha_{2}, \alpha_{3}, \alpha_{5}, \alpha_{15}\}$ and 
$\cL=\{\alpha_{8}, \alpha_{17}, \alpha_{20}, \alpha_{21}\}$, 
\item $\cI=\{\alpha_{1}, \alpha_{4}, \alpha_{6}, \alpha_{7}, \alpha_{13}, \alpha_{14}\}$ and $\cJ=\{\alpha_{9}, \alpha_{10}, \alpha_{11}, \alpha_{19}\}$.
\end{itemize}
Our analysis differs from the previous cases in that $X'$ is not a subgroup, and $|X'|=q^2$. 
The graph structure of $\fC$, represented in Figure \ref{fig:covcor}, is more complicated than in the case of the other heartless cores, 
as we have $7$ circles of both parities. This case can be examined in a similar way by applying Proposition \ref{prop:newrl} 
after the first reduction. We end this subsection by including the computational details in this case. 

\begin{prop}
\label{core[6,16,12]}
The irreducible characters corresponding to the $[6, 16, 12]$-core in type $\rE_6$ are parametrized as follows: 
\begin{itemize}
\item 
If $p \ne 3$, then $\Irr(X_{\cS})_{\cZ}=\cF_{14}^{p \ne 3}$ 
consists 
of $(q-1)^6$ characters of degree $q^5$. 
\item 
If $p=3$, then $\Irr(X_{\cS})_{\cZ}=\cF_{14}^{p = 3}$ 
consists 
of $q^2(q-1)^6$ characters of degree $q^4$. 
\end{itemize}
The labels of the characters in $\cF_{14}^{p \ne 3}$ and 
$\cF_{14}^{p = 3}$ are collected in Table \ref{tab:coresE6}. 
\end{prop}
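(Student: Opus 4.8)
The plan is to obtain the two families by two successive applications of the reduction lemma, Proposition \ref{prop:newrl}, with the prime $3$ entering only at the second application. First I would set $Z:=X_\cZ$, $X:=X_\cI$, $Y:=X_\cJ$ and $H:=X_{\cS\setminus\cI}$ with the sets $\cI$, $\cJ$ recorded before the statement. By Lemma \ref{lem:corcov} these satisfy the hypotheses of Corollary \ref{cor:plus}, so for each linear character $\lambda$ of $X_\cZ$ with $\lambda(x_\gamma(t))=\phi(a_\gamma t)$, $a_\gamma\in\F_q^\times$ (accounting for the six labels $a_{12},a_{16},a_{18},a_{22},a_{24},a_{25}$), the corollary gives a bijection $\Irr(\overline V\mid\lambda)\to\Irr(X_\cS\mid\lambda)$; together with the decomposition \eqref{eq:decomp} over these central characters, it suffices to parametrize $\Irr(\overline V\mid\lambda)$ for each $\lambda$. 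I would then solve Equation \eqref{eq:1i} for the pairing $\lambda([x,y])$ to read off $X'$ and $Y'$; the computation yields $|X'|=q^2$, with $X'$ merely a subset of $X$ rather than a subgroup. This is precisely the signal that, unlike all other heartless cores, the quotient $\overline V=H'/(\tY\ker\lambda)$ retains a genuinely nonabelian part.

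Next I would make this residual nonabelian part explicit. Its noncommutativity is carried by the images of the two generalized root subgroups $X_{1,6,7,14}$ and $X_{4,6,7,13}$ of Definition \ref{def:genroot} (the diagonal subgroups indexed by the two $b$-labels in Table \ref{tab:coresE6}), together with a central root subgroup $X_\gamma$, $\gamma\in\cZ$. Using the Chevalley relations \eqref{eq:comrel} in $\rE_6$ I would compute the single surviving commutator $[X_{1,6,7,14},X_{4,6,7,13}]$. The key point is that $\gamma$ arises as a sum $\alpha+\beta$, with $\alpha$ ranging over $\{1,6,7,14\}$ and $\beta$ over $\{4,6,7,13\}$, in three distinct ways, and that the three structure constants, each $\pm 1$ in simply laced type, add up to $\pm 3$ rather than cancelling. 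Hence the $X_\gamma$-component of $[x_{1,6,7,14}(t),x_{4,6,7,13}(s)]$ has the form $x_\gamma(\pm 3\,ts)$ (up to the chosen diagonal coefficients), which is trivial precisely when $p=3$. This single commutator is the source of the dichotomy.

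I would then close the argument by a second application of Proposition \ref{prop:newrl} inside $\overline V$, or equivalently by a direct Heisenberg-type count. If $p\neq 3$ the commutator is nondegenerate, $\overline V$ is modulo $\ker\lambda$ a Heisenberg-type group on the two extra coordinates with $X_\gamma$ as centre, so there is a unique irreducible over $\lambda$; its induction to $X_\cS$ has degree $q^5$, and ranging over the $(q-1)^6$ choices of $\lambda$ produces $\cF_{14}^{p\ne 3}$. If $p=3$ the commutator vanishes, $\overline V$ becomes abelian, and its $q^2$ linear characters over $\lambda$, indexed by $b_{1,6,7,14}$ and $b_{4,6,7,13}$, induce to $q^2$ characters of degree $q^4$, giving $\cF_{14}^{p=3}$ of cardinality $q^2(q-1)^6$. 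In both regimes I would cross-check against Equation \eqref{eq:allfam}, which forces $\sum_\chi\chi(1)^2=q^{|\cS\setminus\cZ|}(q-1)^{|\cZ|}=q^{10}(q-1)^6$; indeed $(q-1)^6(q^5)^2=q^2(q-1)^6(q^4)^2=q^{10}(q-1)^6$, confirming the counts.

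The main obstacle is the second step. Because $X'$ is not a subgroup, I cannot simply iterate Corollary \ref{cor:plus}; instead I must identify the concrete nonabelian quotient $\overline V$ by hand and feed it back into Proposition \ref{prop:newrl}. Within that step the delicate part is the sign bookkeeping: one must verify, with the conventions of \eqref{eq:comrel} and a check in \cite{GAP4}, that the three contributing structure constants genuinely sum to $\pm 3$ and do not partially cancel, so that the commutator degenerates exactly at $p=3$ and not identically or at some other prime. Establishing this sign pattern is what rigorously separates the two cases and pins down both the character degrees and their multiplicities.
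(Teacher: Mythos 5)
Your proposal follows the paper's proof essentially verbatim: a first application of Proposition \ref{prop:newrl} (via Corollary \ref{cor:plus} and Equation \eqref{eq:1i}) yields $Y'=1$ and $X'=X_1'X_2'$ of order $q^2$ with $X'$ not a subgroup, the commutator of the two diagonal subgroups pairs with $\lambda$ to $\phi(3a_{12}a_{16}a_{18}a_{22}a_{24}a_{25}t_1t_2)$, and the dichotomy is settled exactly as you describe---a second application of Proposition \ref{prop:newrl} with arm $X_1'$ and leg $X_2'$ giving one character of degree $q^5$ per $\lambda$ when $p\ne 3$, and an abelian quotient giving $q^2$ characters of degree $q^4$ per $\lambda$ when $p=3$---with the same cross-check against Equation \eqref{eq:allfam}. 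Your only slip is cosmetic: the commutator does not lie in a single root subgroup $X_\gamma$ with value $\pm 3ts$; the paper computes $[x_{1,6,7,14}(t_1),x_{4,6,7,13}(t_2)]=x_{12}(a_{16}a_{18}a_{22}a_{24}a_{25}t_1t_2)\,x_{16}(2a_{12}a_{18}a_{22}a_{24}a_{25}t_1t_2)$, so the factor $3=1+2$ emerges only after applying $\lambda$ to the two central components, and at $p=3$ the commutator is trivial only modulo $\ker\lambda$, which is precisely why $X'$ becomes a subgroup of $X_{\cS}/(\ker\lambda)$ rather than of $X_{\cS}$.
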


\begin{proof}
The form of Equation \eqref{eq:1i} is 
\begin{align*}
 s_1( a_{22}t_{19}+a_{12}t_9 ) 
&+s_4 (a_{16}t_{11}+a_{24}t_{19} ) 
+s_6 (-a_{16}t_{10}-a_{25}t_{19} ) 
+s_7 ( a_{18}t_{10} ) \\
&+s_{13}( a_{24}t_{10}+a_{25}t_{11} )
+s_{14}( a_{24}t_9 ) =0.
\end{align*}

We have $X'=X_1'X_2'$ with $X_1':=\{x_{1, 6, 7, 14}(t_1) \mid t_1 \in \F_q\}$ and $X_2:=\{x_{4, 6, 7, 13}(t_2) \mid t_2 \in \F_q\}$, and $Y'=1$, where 
$$x_{1, 6, 7, 14}(t_1):=x_{1}(a_{18}a_{24}a_{25}t_1)x_{6}(a_{18}a_{22}a_{24}t_1)x_{7}(a_{16}a_{22}a_{24}t_1)x_{14}(-a_{12}a_{18}a_{25}t_1)
$$and$$
x_{4, 6, 7, 13}(t_2):=x_{4}(a_{18}a_{25}t_2)x_{6}(a_{18}a_{24}t_2)x_{7}(2a_{16}a_{24}t_2)x_{13}(-a_{16}a_{18}t_2).$$
We notice that each of $X_1'$ and $X_2'$ are subgroups, but we have 
$$[x_{1, 6, 7, 14}(t_1), x_{4, 6, 7, 13}(t_2)]
=x_{12}(a_{16}a_{18}a_{22}a_{24}a_{25}t_1t_2)
x_{16}(2a_{12}a_{18}a_{22}a_{24}a_{25}t_1t_2),$$
hence 
$$\lambda([x_{1, 6, 7, 14}(t_1), x_{4, 6, 7, 13}(t_2)])=\phi(3a_{12}a_{16}a_{18}a_{22}a_{24}a_{25}t_1t_2)$$
and $X'$ is not necessarily a subgroup of $X_{\cS}$. 

If $p \ne 3$, then we can apply again Proposition \ref{prop:newrl}  
with arm $X_1'$ and leg $X_2'$, reducing to the 
abelian subquotient $X_{\cZ}/(\ker \lambda)$. This gives the 
family $\cF_{14}^{p \ne 3}$ in Table \ref{tab:coresE6}. 

If $p=3$, then $X'$ and 
$X'X_{\cZ}$ are abelian subgroups of $X_{\cS}/(\ker \lambda)$. 
In this case, we obtain the family $\cF_{14}^{p = 3}$ in Table \ref{tab:coresE6}, which 
concludes our analysis. 
\end{proof}

\subsection{Cores with a heart}\label{sub:chea} We now want to 
investigate the cores of the form  
$$[3, 10, 9], \,\,
[4, 18, 18], \,\,
[4, 21, 28], \,\,
[4, 24, 43], \,\,
[5, 18, 18], \,\,
[6, 19, 20]$$
in type $\rD_6$, and of the form 
$$[3, 10, 9], \,\,
[5, 16, 15], \,\,
[5, 20, 25], \,\,
[5, 21, 30], \,\,
[6, 15, 12], \,\,
[6, 17, 17]$$
in type $\rE_6$. To study them, we apply 
repeatedly Proposition \ref{prop:newrl} 
and Corollary \ref{cor:cornewrl}. 

We recall the structure of the graph of $\fC$ in each case. The cores of 
the form $[3, 10, 9]$ have a single circle, and a heart of size $1$. The graphs 
of the cores of the form $[4, 18, 18], [4, 21, 28]$ and $[5, 18, 18]$ have 
two connected components, and each of them is a hexagon. 
Their hearts have sizes $2$, $5$ and $1$ respectively. The 
graph of the $[6, 19, 20]$-core contains $6$ circles, as Figure \ref{fig:exgra} shows, and its heart has 
size $1$. The graph of $[5, 16, 15]$-core, as in Figure \ref{fig:exgra}, has three circles, as well as 
the graphs of the $[5, 20, 25]$-core and the $[5, 21, 30]$-core, and their hearts have sizes 
$1$, $5$ and $6$ respectively. The heart of the latter nonabelian core is the biggest among 
all nonabelian cores in rank $6$ or less, which makes it one of the most complicated to study. 
We just refer to \cite[Section 3]{LM15} in the sequel, where the study of the $[5, 21, 30]$-core of $\rE_6$ has 
been carried out thoroughly. The graph of $[6, 15, 12]$-core has two connected components, 
namely its unique circle, and its heart of size $1$. Finally, we find $3$ circles in the $[6, 17, 17]$-core; 
here $|\cH|=1$. 

The cores of the form $[3, 10, 9]$ in types $\rD_6$ and $\rE_6$ are 
isomorphic to the only $[3, 10, 9]$-core in type $\rF_4$. 
The additional complication in the analysis of a 
core with a heart, as in \cite[\S4.3]{GLMP16}, lies in the determination of 
a certain non-linear polynomial over $\F_q$, which arises from the action of the root subgroups indexed by 
the heart on a suitable subquotient of $X_{\cS}$, and of the solutions in $\F_q$ of an 
equation depending on such a polynomial and the function $\phi:\F_q \to \C^\times$ defined in \S\ref{ss:characters}. 
The typical situation is that we get a polynomial of degree $p$ when $p$ is a 
bad prime for $G$. If $p=2$, then the 
situation can be easily described. 

\begin{remark}\label{rem:quad}
Let $q=2^f$, and let us consider the following expression in $\F_q$, 
$$f(s, t)=\phi(st(b+at))$$
for every $b, a \in \F_q$. Let us define
$$Z_1:=\{s \in \F_q \mid f(s, t)=0 \text{ for all } t \in \F_q\}, \quad 
Z_2:=\{t \in \F_q \mid f(s, t)=0 \text{ for all } s \in \F_q\}.
$$
It is easy to see that
\begin{itemize}
\item If $b=a=0$, then $Z_1=Z_2=\F_q$.
\item If ($b\ne 0$ and $a = 0$) or ($b= 0$ and $a \ne 0$), then $Z_1=Z_2=1$.
\item If $b\ne 0$ and $a \ne 0$, then $Z_1=\{0, a/b^2\}$ and $Z_2=\{0, b/a\}$.
\end{itemize}

\end{remark}

When $p=3$ is a bad prime for $\rE_6$, the polynomial arising from the above investigation 
is of degree $3$ just in the case of the core of the form $[5, 21, 30]$, 
which gives rise to irreducible character degrees $q^7/3$ in $\rU\rE_6(3^f)$; 
as previously remarked, 
the study of this core 
is detailed in 
\cite[Section 3]{LM15}. 
We include below the analysis just for 
three nonabelian cores with a heart. We discuss first the core of the form $[4, 18, 18]$ in 
$\rD_6$, which for $p=2$ gives rise to the only examples of irreducible characters of $\rU\rY_r(q)$ of the form 
$q^m/p^i$ with $i \ge 2$ when $\rY$ is of simply laced type and $r \le 6$. We then include 
full details for the $[4, 24, 43]$-core in type $\rD_6$ and the $[5, 20, 25]$-core 
in type $\rE_6$; one notices the different behavior of the 
bad primes $p=2$ in type $\rD_6$ and $p=3$ in type $\rE_6$ respectively. 
We decide to include every step in their study since these two cores, 
along with the $[5, 21, 30]$-core, seem to be the most difficult cases to examine. 
The other 
cases are investigated in a similar manner; the computations are available in 
\cite{LMP}. All character labels for each nonabelian core with a heart are also collected in 
Tables \ref{tab:coresD6} and \ref{tab:coresE6}. 

We use the following notation for a core 
$\fC$ corresponding to $\cS$ and 
$\cZ$. We construct subquotients 
$V_{\ub}^{n}$, 
and  $H_{\ub}^n$, $X_{\ub}^n$,  $Y_{\ub}^n$, 
$X_{\ub}^{' n}$,  $Y_{\ub}^{' n}$ 
and the character $\lambda^{\ub} $ 
in the following way. The index $n$ corresponds to 
the $n$-th application of Proposition \ref{prop:newrl} (possibly trivial if we just enlarge the kernel of a central character 
from step $n-1$ to step $n$), and $\ub$ denotes a certain tuple with entries in $\F_q$, which corresponds to the 
value of a central character. 

We initialize $\ub=\emptyset$ the empty tuple and  $V_{\emptyset}^0=X_{\cS}$,  
and $\lambda^{\emptyset}=\lambda$ a central character. Now assume $V_{\uhb}^{n-1}$ 
is constructed for $n \ge 1$. We assume 
that Proposition \ref{prop:newrl} applies to 
$V=V_{\uhb}^{n-1}$
and let $H=:H_{\uhb}^{n}$, $X=:X_{\uhb}^{n}$, 
$Y=:Y_{\uhb}^{n}$, $X'=:X_{\uhb}^{' n}$,
$Y'=:Y_{\uhb}^{' n}$. 
Finally, for every $\tilde{\ub}=(\tilde{b_1},  \dots, 
\tilde{b_s}) \in \F_q^s$ and $\mu_{\tilde{\ub}}$ as in Corollary \ref{cor:cornewrl}, we define $\ub$ as the concatenation of $\uhb$ and $\tilde{\ub}$ and 
$\lambda^{\ub}:=\lambda^{\uhb} \otimes \mu_{\tilde{\ub}}$, 
and we 
construct 
$V_{\ub}^{n}:=(V_{\uhb}^{n-1})_{\tilde{\ub}}$
. By convention, 
in the sequel we omit the top index $1$, and we drop the symbol $\emptyset$ when it occurs. 

We start by studying the unique core of the form $[4, 18, 18]$ in 
type $\rD_6$. In this case, 
\begin{itemize}
\item $\cS=\{\alpha_{1}, \alpha_{2}, \alpha_{4}, \alpha_{5}, \alpha_{6}, \alpha_{7}, \alpha_{10}, \alpha_{11}, \alpha_{12}, \alpha_{14}, \alpha_{15}, \alpha_{
16}, \alpha_{17}, \alpha_{18}, \alpha_{19}, \alpha_{21}, \alpha_{22}, \alpha_{28}\}$, 
\item $\cZ=\{\alpha_{16}, \alpha_{21}, \alpha_{22}, \alpha_{28}\}$, 
\item $\cA=\{\alpha_{3}, \alpha_{8}, \alpha_{9}, \alpha_{13}\}$ and 
$\cL=\{\alpha_{20}, \alpha_{23}, \alpha_{24}, \alpha_{26}\}$, 
\item $\cI=\{ \alpha_{1}, \alpha_{5}, \alpha_{6}, \alpha_{12}, \alpha_{
14}, \alpha_{15} \}$ and $\cJ=\{\alpha_7, \alpha_{10}, \alpha_{11}, \alpha_{17}, \alpha_{18}, \alpha_{19}\}$.
\end{itemize}

\begin{prop}
\label{core[4,18,18]}
The irreducible characters corresponding to the $[4, 18, 18]$-core in type $\rD_6$ are parametrized as follows: 

\begin{itemize}
\item If $p\neq 2$, then $\Irr(X_{\cS})_{\cZ}=\cF_3^{p \ne 2}$ consists of $q^2(q-1)^4$ 
characters of degree $q^6$. 

\item 
If $p=2$, then 
$$\Irr(X_{\cS})_{\cZ}=\cF_3^{1, p=2} \sqcup \cF_3^{2, p=2} \sqcup\cF_3^{3, p=2} \sqcup\cF_4^{4, p=2},$$
where
  \begin{itemize}
    \item $\cF_3^{1, p=2}$ consists of $(q-1)^4$ characters of degree $q^6$, 
    \item $\cF_3^{2, p=2}$ and $\cF_3^{3, p=2}$ consist each of $4(q-1)^5$ characters of degree $q^6/2$, and 
    \item $\cF_3^{4, p=2}$ consists of $16(q-1)^6$ characters of degree $q^6/4$. 
  \end{itemize}
\end{itemize}
The labels of the characters in $\cF_3^{p \ne 2}$ and in $\cF_3^{1, p=2}$, $\dots$, $\cF_3^{4, p=2}$ are collected in Table \ref{tab:coresD6}. 
\end{prop}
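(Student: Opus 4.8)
The plan is to apply the reduction machinery of Proposition~\ref{prop:newrl} and Corollary~\ref{cor:cornewrl} repeatedly, exactly as in the general strategy of \S\ref{sub:chea}, with the arms and legs supplied by the sets $\cI$ and $\cJ$ listed before the statement. First I would fix a central character $\lambda\in\Irr(X_\cZ)$ lying over all four central roots $\alpha_{16},\alpha_{21},\alpha_{22},\alpha_{28}$, so $\ua=(a_{16},a_{21},a_{22},a_{28})\in(\F_q^\times)^4$, and write down the defining bilinear form of Equation~\eqref{eq:1i} for $x\in X=X_\cI$ and $y\in Y=X_\cJ$ using the Chevalley relations~\eqref{eq:comrel}. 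From this bilinear form I would solve for $X'$ and $Y'$ as in Remark~\ref{rem:zuij}: the point is that the six legs in $\cJ$ pair against the six arms in $\cI$, and the rank of the pairing (over $\F_q$) controls the degree. For $p\neq 2$ the pairing is nondegenerate in the relevant sense, so $X'$ and $Y'$ reduce to diagonal generalized root subgroups of small size, and a single application of Corollary~\ref{cor:plus} collapses $X_\cS$ down to the abelian quotient $X_\cZ/(\ker\lambda)$, leaving $(q-1)^4$ characters; tracking the two free parameters $b_2,b_4$ that survive in the transversal gives the factor $q^2$, for a total of $q^2(q-1)^4$ characters of degree $q^6=\sqrt{q^{|\cS\setminus\cZ|}}$. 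Equation~\eqref{eq:allfam} is the consistency check I would run at the end: $\sum\chi(1)^2$ must equal $q^{14}(q-1)^4$.

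The interesting case is $p=2$, and here I expect the heart to force a genuine degeneration. The heart $\cH$ of this core has size $2$ (as recorded in \S\ref{sub:chea}), meaning two roots sit in isolated connected components of the graph of $\fC$; these are precisely the roots whose associated actions produce the quadratic expressions governed by Remark~\ref{rem:quad}. The plan is to carry out the reduction in stages indexed by $n$ as in the notation fixed in \S\ref{sub:chea} ($V_{\ub}^n$, $H_{\ub}^n$, and so on), and at the stage where a heart root acts, invoke Remark~\ref{rem:quad} with the appropriate coefficients $a,b\in\F_q$ read off from the structure constants. In characteristic $2$ the sets $Z_1,Z_2$ of Remark~\ref{rem:quad} are either all of $\F_q$ or of size $2$ (never size $1$), and it is exactly the appearance of the size-$2$ sets that halves the available degree and simultaneously doubles a family of characters. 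Because there are \emph{two} heart roots, this halving can occur independently twice, which is the mechanism producing degree $q^6/4$: the two size-$2$ kernels stack to give a subgroup $X'$ whose relevant quotient has order $4$, and the characters split accordingly. I would organize the $p=2$ analysis as a case split on which of the two quadratic conditions are nondegenerate, yielding the four families $\cF_3^{1,p=2},\dots,\cF_3^{4,p=2}$: both degenerate gives the full-degree family $(q-1)^4$ of degree $q^6$; exactly one nondegenerate gives the two families of degree $q^6/2$ (each $4(q-1)^5$, the factors of $4$ being the two $d$-labels indexing $\F_2\times\F_2$ worth of choices crossed with the sign); and both nondegenerate gives the $16(q-1)^6$ characters of degree $q^6/4$.

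The main obstacle will be bookkeeping the two heart roots \emph{simultaneously} rather than sequentially: after the first application of Proposition~\ref{prop:newrl} the second quadratic interaction lives inside the reduced subquotient $V_{\ub}^{1}$, and I must verify that the coefficients $a,b$ feeding Remark~\ref{rem:quad} at the second stage are the genuine images of the structure constants under the earlier reduction, not spurious artifacts—in other words, that the two quadratic conditions are truly independent and that assumption~(v) (existence of a complement $\tY$ for $Y'$) persists at the second stage, which is automatic since $Y$ is elementary abelian but must be checked to hold for the $Y'$ that actually arises. The explicit generalized root elements $x_{7,18,19}$, $x_{10,11,17}$ and the diagonal subgroups $x_{1,14,15}$, $x_{5,6,12}$ appearing in the $d$-subscripts of Table~\ref{tab:coresD6} are precisely the generators of these $\F_2$-kernels, and I would extract them directly from the two instances of Remark~\ref{rem:quad}; writing these down correctly, with the right structure-constant signs, is the delicate computational step. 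Once the four families and their parameter ranges are fixed, the final verification is again Equation~\eqref{eq:allfam}: one checks
$$
(q-1)^4 q^{12} + 2\cdot 4(q-1)^5\cdot\frac{q^{12}}{2} + 16(q-1)^6\cdot\frac{q^{12}}{4} = q^{14}(q-1)^4,
$$
which confirms that the four families exhaust $\Irr(X_\cS)_\cZ$ with no overcounting.
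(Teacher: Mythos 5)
Your plan follows essentially the same route as the paper's proof: a first application of Proposition \ref{prop:newrl} with the arm and leg built from $\cI$ and $\cJ$, then (for $p=2$) a second application pitting $X'=X_1'X_2'$ against the heart $X_2X_4$ (the heart is indeed $\{\alpha_2,\alpha_4\}$, of size $2$), with the two quadratic conditions handled by Remark \ref{rem:quad} and a case split on the two extended central-character values $c_{7,18,19}$, $c_{10,11,17}$ producing exactly the four families; your family counts, degrees, and the identification of the generalized root elements $x_{1,14,15}$, $x_{5,6,12}$, $x_{7,18,19}$, $x_{10,11,17}$ all agree with the paper and Table \ref{tab:coresD6}. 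Three slips should be fixed, none structural. First, your closing consistency identity is false as displayed: the squared degrees are $(q^6/2)^2=q^{12}/4$ and $(q^6/4)^2=q^{12}/16$, not $q^{12}/2$ and $q^{12}/4$; the correct check, as in the paper, is
$$(q-1)^4q^{12}+2\cdot 4(q-1)^5\cdot\frac{q^{12}}{4}+16(q-1)^6\cdot\frac{q^{12}}{16}=q^{12}(q-1)^4\bigl(1+(q-1)\bigr)^2=q^{14}(q-1)^4,$$
consistent with Equation \eqref{eq:allfam}. Second, your reading of Remark \ref{rem:quad} is inverted for this application: since the coefficients $a=a_{21}a_{22}a_{28}$ and $a=a_{16}a_{21}a_{22}$ are always nonzero, the sets $Z_1,Z_2$ are trivial when the corresponding $c$ vanishes and of size $2$ when $c\ne 0$; they are never all of $\F_q$, and it is the trivial case (not a ``degenerate'' quadratic) that yields the full-degree family $\cF_3^{1,p=2}$, while $c\in\F_q^\times$ contributes the factor $(q-1)$ and the two size-$2$ sets the factor $4$. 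Third, in the $p\ne 2$ case the paper finds $X'=Y'=1$ outright (not small diagonal subgroups), and the factor $q^2$ counts characters of $X_2X_4$ inside the abelian quotient $\overline{V}=X_2X_4Z/(\ker\lambda)$ rather than parameters surviving ``in the transversal''; relatedly, your aside $q^6=\sqrt{q^{|\cS\setminus\cZ|}}$ is wrong since $|\cS\setminus\cZ|=14$ gives $q^7$ — the degree $q^6$ is forced by $q^2\cdot(q^6)^2=q^{14}$ per Equation \eqref{eq:allfam}.
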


\begin{proof}
The form of Equation \eqref{eq:1i} is 
\begin{align*}
s_7(a_{21}t_{14}+a_{22}t_{15})&+s_{10}(-a_{16}t_6-a_{21}t_{12})+s_{11}(-a_{16}t_5-a_{22}t_{12})+s_{17}(a_{21}t_5+a_{22}t_6)+\\
&+s_{18}(-a_{21}t_1-a_{28}t_{15})+s_{19}(-a_{22}t_1-a_{28}t_{14})=0.
\end{align*}

If $p \ne 2$, then $X'=Y'=1$, and $\overline{V}=X_2X_4Z/(\ker \lambda)$ is abelian. We obtain the family $\cF_3^{p \ne 2}$ in Table \ref{tab:coresD6}. 

If $p=2$, we 
have that $X':=X_1'X_2'$ and $Y_1'Y_2'$, where
$$X_1':=\{x_{1, 14, 15}(t_1) \mid t_1 \in \F_q\} \quad \text{and} \quad X_1':=\{x_{5, 6, 12}(t_2) \mid  t_2 \in \F_q\},$$
$$Y_1':=\{x_{7, 18, 19}(s_1) \mid s_1 \in \F_q\} \quad \text{and} \quad Y_2':=\{x_{10, 11, 17}(s_2) \mid  s_2 \in \F_q\},$$
and for every $s_1, s_2, t_1, t_2 \in \F_q$, 
$$x_{1, 14, 15}(t_1):=x_{1}(a_{28}t_1)x_{14}(a_{22}t_1)x_{15}(a_{21}t_1) \quad \text{and} \quad
x_{5, 6, 12}(t_2):=x_{5}(a_{22}t_2)x_{6}(a_{21}t_2)x_{12}(a_{16}t_2)$$
$$x_{7, 18, 19}(s_1):=x_{7}(a_{28}s_1)x_{18}(a_{22}s_1)x_{19}(a_{21}s_1) \ \ \text{and} \ \ x_{10, 11, 17}(s_2):=x_{10}(a_{22}s_2)x_{11}(a_{21}s_2)x_{17}(a_{16}s_2).$$
Notice that $X'$ is a subgroup of $\overline{V}$. 
We extend $\lambda$ to $\lambda'=\lambda^{c_{7, 18, 19}, c_{10, 11, 17}}$ for every $c_{7, 18, 19}, c_{10, 11, 17} \in \F_q$. In $\overline{V}$, we have that $[X_1', X_4]=[X_2', X_2]=1$, and that 
\begin{align*}
[x_2(&s_2)x_4(s_4), x_{1, 14, 15}(t_1)x_{5, 6, 12}(t_2)]=x_{7, 18, 19}(s_2t_1)x_{10, 11, 17}(s_4t_2)x_{16}(a_{21}a_{22}s_4t_2^2) \, \cdot \\
\cdot \,  
&x_{21}(a_{22}a_{28}s_2t_1^2+a_{16}a_{22}s_4t_2^2)x_{22}(a_{21}a_{28}s_2t_1^2+a_{16}a_{21}s_4t_2^2)x_{28}(a_{21}a_{22}s_2t_1^2). 
\end{align*}
We then want to apply Proposition \ref{prop:newrl} with $X'$ as a candidate for an arm, 
and $X_2X_4$ as a candidate for a leg. We 
apply $\lambda$ to the above, and we use 
Remark \ref{rem:quad} study 
the equation 
$$\phi(
s_2t_1(c_{7, 18, 19}+a_{21}a_{22}a_{28}t_1)
+s_4t_2(c_{10, 11, 17}+a_{16}a_{21}a_{22}t_2)
)=1.$$

If $c_{7, 18, 19}=0$ and $c_{10, 11, 17}=0$, 
then $X_{(0, 0)}^{' 2}=Y_{(0, 0)}^{' 2}=1$ and 
$V^{2}_{(0, 0)}$ 
is abelian. This gives the family $\cF_3^{1, p = 2}$ in Table \ref{tab:coresD6}. 

If $a_{7, 18, 19}:=c_{7, 18, 19}\ne 0$ and $c_{10, 11, 17}=0$, then 
$$
X_{(a_{7, 18, 19}, 0)}^{' 2}:=\{1, x_{1, 14, 15}(a_{7, 18, 19}/(a_{21}a_{22}a_{28}))\} \ \ \text{and} \ \ 
Y_{(a_{7, 18, 19}, 0)}^{' 2}:=\{1, x_2(a_{21}a_{22}a_{28}/(a_{7, 18, 19}^2))\},$$
and 
$V^{2}_{(a_{7, 18, 19}, 0)} $ 
is abelian. This gives the family $\cF_3^{2, p = 2}$ in Table \ref{tab:coresD6}. 

If $c_{7, 18, 19}= 0$ and $a_{10, 11, 17}:=c_{10, 11, 17} \ne 0$, then 
$$
X_{(0, a_{10, 11, 17})}^{' 2}:=\{1, x_{5, 6, 12}(a_{10, 11, 17}/(a_{16}a_{21}a_{22}))\} \ \ \text{and} \ \ 
Y_{(0, a_{10, 11, 17})}^{' 2}:=\{1, x_4(a_{16}a_{21}a_{22}/(a_{10, 11, 17}^2))\},$$
and 
$V^{2}_{(0, a_{10, 11, 17})}$ 
is abelian. This gives the family $\cF_3^{3, p = 2}$ in Table \ref{tab:coresD6}. 

Finally, if $a_{7, 18, 19}:=c_{7, 18, 19}\ne 0$ and $
a_{10, 11, 17}:=c_{10, 11, 17} \ne 0$, then 
we have that $X_{(a_{7, 18, 19}, a_{10, 11, 17})}^{' 2}=X_{(a_{7, 18, 19}, 0)}^{' 2}X_{(0, a_{10, 11, 17})}^{' 2}$ and $Y_{(a_{7, 18, 19}, a_{10, 11, 17})}^{' 2}=Y_{(a_{7, 18, 19}, 0)}^{' 2}Y_{(0, a_{10, 11, 17})}^{' 2}$, and 
$V^{2}_{(a_{7, 18, 19}, a_{10, 11, 17})}$ 
is abelian. This 
yields the family $\cF_3^{4, p = 2}$ in Table \ref{tab:coresD6}. 

We observe that 
$$(q^6)^2|\cF_3^{1, p = 2}|+(q^6/2)^2|\cF_3^{2, p = 2}|+(q^6/2)^2|\cF_3^{3, p = 2}|+(q^6/4)^2|\cF_3^{4, p = 2}|=q^{14}(q-1)^4,$$
and since $|\cS \setminus \cZ|=14$ and $|\cZ|=4$, Equation \eqref{eq:allfam} then yields 
$$\Irr(X_{\cS})_{\cZ}=\cF_3^{1, p=2} \sqcup \cF_3^{2, p=2} \sqcup\cF_3^{3, p=2} \sqcup\cF_4^{1, p=2},$$
which is our second claim. 
\end{proof}

We now study the unique core of the form $[4, 24, 43]$ in 
type $\rD_6$. In this case, 
\begin{itemize}
\item $\cS=\{ \alpha_{1},  \dots,\alpha_{24}\}$,
\item $\cZ=\{\alpha_{21}, \alpha_{22}, \alpha_{23}, \alpha_{24}\}$,
\item $\cA = \cL = \emptyset$, 
\item $\cI=\{ \alpha_1, \alpha_5, \alpha_6\}$ and $\cJ=\{\alpha_{17}, \alpha_{18}, \alpha_{19} \}$.
\end{itemize}

\begin{prop}
\label{core[4,24,43]}
The irreducible characters corresponding to the $[4, 24, 43]$-core in type $\rD_6$ are parametrized as follows:
\begin{itemize}
\item 
If $p\neq 2$, then 
$$\Irr(X_\cS)_\cZ=\cF_5^{1, p \ne 2} \sqcup \cF_5^{2, p \ne 2} \sqcup \cF_5^{3, p \ne 2},$$
where
  \begin{itemize}
    \item $\cF_5^{1, p \ne 2}$ consists of $q(q-1)^5$ characters of degree $q^9$, 
    \item $\cF_5^{2, p\ne 2}$ consists of $(q-1)^5$ characters of degree $q^9$, and  
    \item $\cF_5^{3, p\ne 2}$ consists of $q^2(q-1)^4$ characters of degree $q^8$. 
  \end{itemize}
\item 
If $p= 2$, then 
$$\Irr(X_\cS)_\cZ=\cF_5^{1, p = 2} \sqcup \cF_5^{2, p = 2} \sqcup \cF_5^{3, p = 2} \sqcup \cF_5^{4, p = 2},$$
where
  \begin{itemize}
    \item $\cF_5^{1, p = 2}$ consists of $4q(q-1)^5$ characters of degree $q^9/2$, 
    \item $\cF_5^{2, p= 2}$ consists of $q^2(q-1)^5$ characters of degree $q^8$, 
    \item $\cF_5^{3, p= 2}$ consists of $q(q-1)^4$ characters of degree $q^8$, and 
    \item $\cF_5^{4, p= 2}$ consists of $4q(q-1)^5$ characters of degree $q^8/2$.
  \end{itemize}
\end{itemize}
The labels of the characters in $\cF_5^{1, p \ne 2}$, $\dots$, $\cF_5^{3, p \ne 2}$ 
and in $\cF_5^{1, p = 2}$, $\dots$, $\cF_5^{4, p = 2}$ are collected in Table \ref{tab:coresD6}. 
\end{prop}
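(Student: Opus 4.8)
The plan is to mimic the proof of Proposition \ref{core[4,18,18]}, applying Proposition \ref{prop:newrl} (in the form of Corollary \ref{cor:cornewrl}) repeatedly until the surviving subquotient becomes abelian; since this is the one $\rD_6$ core explicitly excluded from Lemma \ref{lem:corcov}, the sets $\cI$ and $\cJ$ must be fed into Proposition \ref{prop:newrl} directly rather than through Corollary \ref{cor:plus}. I would begin by fixing a linear character $\lambda$ of $X_\cZ$ with parameters $a_{21}, a_{22}, a_{23}, a_{24} \in \F_q^\times$ and setting $Z := X_\cZ$, $X := X_\cI = X_{\{\alpha_1, \alpha_5, \alpha_6\}}$, $Y := X_\cJ = X_{\{\alpha_{17}, \alpha_{18}, \alpha_{19}\}}$ and $H := X_{\cS \setminus \cI}$. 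Expanding $\lambda([y, x]) = 1$ via the Chevalley relations \eqref{eq:comrel} yields the bilinear form \eqref{eq:1i} in the variables $t_1, t_5, t_6, s_{17}, s_{18}, s_{19}$; solving the associated linear system over $X$ (respectively over $Y$) determines $X'$ and $Y'$, the latter appearing as a generalized root subgroup in the sense of Definition \ref{def:genroot} whose complement $\tilde Y$ and arm $\tilde X$ have sizes controlled by $|Y : Y'|$ through Lemma \ref{lem:dual}.

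After this first reduction I would pass to $\overline{V} = H'/(\tilde Y \ker\lambda)$. Because the $[4,24,43]$-core has a heart, $\overline{V}$ is still nonabelian: the root subgroups indexed by the heart yield new commutators landing in $X_\cZ$. I would then extend $\lambda$ to a character of the abelian group $X' X_\cZ/\ker\lambda$ by introducing one extra central parameter $c_\bullet \in \F_q$ for each generalized root subgroup constituting $X'$, and compute the commutators between $X'$ and a candidate leg inside $H$, exactly as in the displayed computation of Proposition \ref{core[4,18,18]}. This prepares a second application of Proposition \ref{prop:newrl} whose governing condition takes the form $\phi(Q) = 1$ for a quadratic expression $Q$ in the $c_\bullet$'s and the arm and leg variables.

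The analysis then branches according to $p$. For $p \neq 2$ the vanishing or non-vanishing of the $c_\bullet$'s dictates whether the heart collapses, and following the resulting possibilities produces the two degree-$q^9$ families $\cF_5^{1, p \neq 2}$, $\cF_5^{2, p \neq 2}$ and the degree-$q^8$ family $\cF_5^{3, p \neq 2}$. For $p = 2$ the exponent is a genuine quadratic over $\F_q$, so I would invoke Remark \ref{rem:quad} to read off $X'$ and $Y'$ at the second stage: each nonzero $c_\bullet$ contributes a two-element arm and leg, and the combinations of the two parameters being zero or nonzero yield the four families $\cF_5^{1, p=2}, \dots, \cF_5^{4, p=2}$ with the fractional degrees $q^9/2$ and $q^8/2$. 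In every branch the character count and degree are obtained from the index of the inducing subgroup together with the Clifford correspondence of Proposition \ref{prop:newrl}.

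Finally I would certify the decomposition by the counting identity \eqref{eq:allfam}: since $|\cS \setminus \cZ| = 20$ and $|\cZ| = 4$, the squared degrees must sum to $q^{20}(q-1)^4$, and a short computation shows that both the three-family total (for $p \neq 2$) and the four-family total (for $p = 2$) equal this value, guaranteeing that $\Irr(X_\cS)_\cZ$ has been exhausted. The chief obstacle I expect is not any single reduction but the density of the commutator data: with $43$ triples $\alpha_i + \alpha_j = \alpha_k$ inside $\cS$, writing \eqref{eq:1i} correctly, isolating the generalized root subgroups that make up $X'$ and $Y'$, and then controlling the signs of the Lie structure constants through the second commutator is the delicate step, which is precisely why this core resists the uniform treatment of Lemma \ref{lem:corcov}.
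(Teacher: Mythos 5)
Your toolkit is the paper's toolkit (iterated applications of Proposition \ref{prop:newrl}, Remark \ref{rem:quad} at $p=2$, final verification against Equation \eqref{eq:allfam}, whose arithmetic you have right), but the branching mechanism you propose is modelled too closely on Proposition \ref{core[4,18,18]} and would fail for this core. For $p\ne 2$ the first application gives $X'=Y'=1$: solving Equation \eqref{eq:1i} for this quattern forces $2t_1=0$, hence $t_1=t_5=t_6=0$. So there are no generalized root subgroups ``constituting $X'$'' to which your central parameters $c_\bullet$ could be attached, and your proposed second stage is vacuous. In the paper the three families $\cF_5^{1,p\ne 2}$, $\cF_5^{2,p\ne 2}$, $\cF_5^{3,p\ne 2}$ emerge only after a chain of further arm/leg reductions drawn from \emph{inside} $H$ --- first $X_2X_4$ against $X_{20}$ (producing the diagonal subgroup $X_{2,4}$), then $X_7X_{10}X_{11}$ against $X_{12}X_{14}X_{15}$ --- after which the root subgroup $X_{13}$ becomes central in the resulting subquotient, and, in the branch $c_{13}=0$, a further reduction makes the diagonal subgroup $X_{8,9}$ central. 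The branching parameters $c_{13}$ and $c_{8,9}$ live on these deeper subquotients, not on the first-step $X'$; your sketch contains no mechanism that would ever reach them.

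For $p=2$ the defect recurs in a different guise. The extension parameter at the second stage attaches to $Y'=X_{17,18,19}$ (not to $X'$), and the four families do \emph{not} arise from a $2\times 2$ grid of zero/nonzero choices of two simultaneously introduced parameters: the split is nested. The case $c_{17,18,19}\ne 0$ yields $\cF_5^{1,p=2}$ after three further reductions (this is where the quadratic of Remark \ref{rem:quad} enters, via Equation \eqref{eq:15613}); the case $c_{17,18,19}=0$ requires three more reductions before a second pair of parameters $(c_{8,9},c_{12,14,15})$ even appears, and that pair splits three ways, with $c_{12,14,15}$ remaining a \emph{free} parameter (contributing a factor $q$ to the count, with the degree staying $q^8$) whenever $c_{8,9}\ne 0$. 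Your scheme of independent binary choices would predict counts of the shape $(q-1)^4$, $4(q-1)^5$, $4(q-1)^5$, $16(q-1)^6$, exactly as in the $[4,18,18]$ core, which contradicts the stated $4q(q-1)^5$, $q^2(q-1)^5$, $q(q-1)^4$, $4q(q-1)^5$. The closing check against \eqref{eq:allfam} (correctly, $q^{20}(q-1)^4$) can certify a correct decomposition once found, but it cannot repair the missing middle: the essential content of this proof is precisely the sequence of five-to-six intermediate reductions that your outline compresses into one.
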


\begin{proof}
The form of Equation \eqref{eq:1i} is 
\begin{align*}
s_{17}(a_{21}t_5+a_{22}t_6)+
s_{18}(-a_{21}t_1-a_{23}t_6)+
s_{19}(-a_{22}t_1-a_{23}t_5)=0.
\end{align*}

Let $p \ne 2$. Then $X'=Y'=1$, and $\overline{V}=X_2X_3X_4X_7 \cdots X_{16}X_{20}Z/(\ker \lambda)$. Notice that 
$X_2 \cap [\overline{V}, \overline{V}]=X_4 \cap [\overline{V}, \overline{V}]=1$, and $[X_i, X_{20}] \ne 1$ just for $i=2, 4$. 
Then we can take $X_2X_4$ for a candidate of an arm and 
$X_{20}$ for a candidate of a leg. We have 
$$[x_{20}(s_{20}), x_2(t_2)x_4(t_4)]=x_{23}(-s_{20}t_2)x_{24}(-s_{20}t_4).$$
Hence we apply Proposition \ref{prop:newrl} with $X^{' 2}=X_{2, 4}=\{x_{2, 4}(t) \mid t \in \F_q\}$ and $Y^{' 2}=1$, reducing to 
$V^{2}=X_{2, 4}X_3X_7 \cdots X_{16}Z/(\ker \lambda)$; here, we have  
$$X_{2,4}:=\{x_{2, 4}(t) \mid t \in \F_q\} \quad \text{ where } \quad x_{2, 4}(t):=x_2(a_{24}t)x_4(-a_{23}t).$$
We have that $X_{12}X_{14}X_{15}$ is 
a subgroup of $V^{2}$, and that 
\begin{equation} \label{eq:71011}
[X_{12}X_{14}X_{15}, X_i] \ne 1  \,\,\, \Longrightarrow \,\,\, i \in \{7 ,10, 11\}
 \text{ and } X_i \cap [V^{2}, V^{2}]=1 \text{ for } i \in \{7 ,10, 11\}. 
\end{equation}
We then apply Proposition \ref{prop:newrl} with $X_{7}X_{10}X_{11}$ 
and $X_{12}X_{14}X_{15}$ as candidates for an arm and 
a leg respectively, reducing to studying the equation 
\begin{equation}\label{eq:tri2}
s_{12}(a_{21}t_{10}+a_{22}t_{11})+
s_{14}(-a_{21}t_7-a_{24}t_{11})+
s_{15}(-a_{22}t_7-a_{24}t_{10})=0.
\end{equation}
As $p \ne 2$, we have that $X^{' 3}=Y^{' 3}=1$. 
We reduce to $V^{3}=X_{2, 4}X_3X_8X_9X_{13}X_{16}Z/(\ker \lambda)$. 

We observe that in $V^{3}$ we have that if $i=8, 9$, then 
$[X_{2, 4}, X_i]=X_{13}$ and $[X_k, X_i]\ne 1$ 
just for $k=16$, 
and that 
$X_{2, 4} \cap [V^{3}, V^{3}]=1=X_{16} \cap [V^{3}, V^{3}]$. 
Moreover, we notice that $X_{13}$ is central in $V^{3}$; we 
extend $\lambda$ to $\lambda^{c_{13}}$ in the usual way for every $c_{13} \in \F_q$. 
If $a_{13}:=c_{13} \ne 0$, we apply Proposition \ref{prop:newrl} with $X_{2, 4}X_{16}$ as a candidate for an arm, and 
$X_8X_9$ as a candidate for a leg. We have that 
\begin{equation}\label{eq:2416}
\lambda([x_{2, 4}(t)x_{16}(t_{16}), x_8(s_8)x_9(s_9)])=\phi(a_{13}t(a_{24}s_9+a_{23}s_8)+t_{16}(a_{24}s_9-a_{23}s_8)).
\end{equation}
We get that $X_{(a_{13})}^{' 4}=Y_{(a_{13})}^{' 4}=1$, and 
$V_{(a_{13})}^{4}=X_3X_{13}Z/(\ker \lambda)$ is abelian. We obtain the 
family $\cF_5^{1, p \ne 2}$ in Table \ref{tab:coresD6}.

Let us now assume that $c_{13}=0$. We examine $V_{(0)}^{3}$, and we notice 
that in this case $[X_{2, 4}, X_i]=1$ if $i=8, 9$. Hence we apply Proposition \ref{prop:newrl} with 
$X_8X_9$ as candidate for a leg, and $X_{16}$ as candidate for an arm. We get the expression as in Equation \eqref{eq:2416} 
by replacing $a_{13}$ with $0$. We obtain $X_{(0)}^{' 4}=X_{8, 9}:=\{x_{8, 9}(t) \mid t \in \F_q\}$ 
and $Y_{(0)}^{' 4}=1$. Here, we have $x_{8, 9}(t):=x_8(a_{24}t)x_9(a_{23}t)$ for every $t \in \F_q$. 
Notice that $X_{8, 9}$ is central in $V_{(0)}^{' 4}=X_{2, 4}X_3X_{8, 9}Z/(\ker \lambda)$; 
we denote by $\lambda''=\lambda'^{c_{8, 9}}$ the usual extension of $\lambda'$ to $X_{8, 9}$ 
for every $c_{8, 9} \in \F_q$. 

If $a_{8, 9}:=c_{8, 9} \ne 0$, then we have 
$$\lambda([x_{2, 4}(s), x_3(t)])=\lambda(x_{8, 9}(st))=\phi(a_{8, 9}st).$$
Proposition \ref{prop:newrl} applies again, with arm $X_{2, 4}$ and leg $X_3$, and 
we reduce to $V_{(0, a_{8, 9})}^{5}=X_{8, 9}Z/(\ker \lambda')$. 
We get the family $\cF_5^{2, p \ne 2}$ in Table \ref{tab:coresD6}. 

Finally, if 
$a_{8, 9}:=c_{8, 9} \ne 0$ then $V_{(0, 0)}^{5}:=X_{2, 4}X_3Z/(\ker \lambda')$
is abelian; this gives the family $\cF_5^{3, p \ne 2}$ in Table \ref{tab:coresD6}. 

As done at the end of Proposition \ref{core[4,18,18]}, the claim in the case $p \ne 2$ follows by a counting argument and Equation \eqref{eq:allfam}. 

Let us now assume $p=2$. In this case, we have 
$$X':=\{x_{1, 5, 6}(t) \mid t \in \F_q\} \qquad \text{and} \qquad Y':=\{x_{17, 18, 19}(s) \mid  s \in \F_q\},$$
where for every $s, t \in \F_q$, 
$$x_{1, 5, 6}(t):=x_{1}(a_{23}t)x_{5}(a_{22}t)x_{6}(a_{21}t) \qquad \text{and} \qquad 
x_{17, 18, 19}(s):=x_{17}(a_{23}s)x_{18}(a_{22}s)x_{19}(a_{21}s),$$
and $\overline{V}=X_2X_3X_4X_7 \cdots X_{16}X_{20}X'Y'Z/(\ker \lambda)$. In a similar way to the case $p \ne 2$ after computing $X'$ and $Y'$, we notice that we can apply 
Proposition \ref{prop:newrl} with $X^{' 2}=X_{2, 4}$ and $Y^{' 2}=1$. We reduce to 
$V^{2}=X_{2, 4}X_3X_7 \cdots X_{16}X'Y'Z/(\ker \lambda)$. We notice that $Y'$ is central in $V^{2}$; let us denote by 
$\lambda':=\lambda^{c_{17, 18, 19}}$ the usual extension of $\lambda$.

Suppose $a_{17, 18, 19}:=c_{17, 18, 19} \ne 0$. In the group $V$, we have 
$$
[X_i, X_{13}] \ne 1 \Rightarrow i \in \{1, 5, 6\}
\quad \text{ and } \quad X_i \cap [V, V]=1 \text{ for } i \in \{1, 5, 6\}.$$
We can then apply Proposition \ref{prop:newrl} with $X_{1, 5, 6}$ as a candidate for an arm, and $X_{13}$ 
as a candidate for a leg. In $V^{2}$, we have 
$$[x_{13}(s_{13}), x_{1, 5, 6}(t)]=x_{17, 18, 19}(s_{13}t)x_{21}(a_{22}a_{23}s_{13}t^2)
x_{22}(a_{21}a_{23}s_{13}t^2)x_{23}(a_{21}a_{22}s_{13}t^2),$$
hence applying $\lambda'$ we obtain the following equation,  
\begin{equation}\label{eq:15613}
\phi(a_{17, 18, 19}s_{13}t+a_{21}a_{22}a_{23}s_{13}t^2)=1.
\end{equation}
We have that 
$$X_{(a_{17, 18, 19})}^{' 3}=\{1, x_{1, 5, 6}(a_{17, 18, 19}/(a_{21}a_{22}a_{23}))\} \text{ and } 
Y_{(a_{17, 18, 19})}^{' 3}:=\{1, x_{13}(a_{21}a_{22}a_{23}/(c_{17, 18, 19}^2))\},$$
and $V_{(a_{17, 18, 19})}^{3}=X_{2, 4}X_3X_7 \cdots X_{12} X_{14}X_{15} X_{16}X_{(a_{17, 18, 19})}^{' 3}Y_{(a_{17, 18, 19})}^{' 3}Y'Z/(\ker \lambda')$. 
In this subquotient, we have that $[X_{2, 4}, X_{12}X_{14}X_{15}] \cap X_{17, 18, 19} \ne 0$, 
that $X_{2, 4} \cap [V_{(a_{17, 18, 19})}^{3}, V_{(a_{17, 18, 19})}^{3}]$=1, and that Equation \eqref{eq:71011} holds. 
Moreover, recall that in $V$ we have that if $k \in \{7, 10, 11\}$, then $[X_i, X_j] \cap X_k \ne 1$ implies $i \in \{2, 4\}$ or $j \in \{2, 4\}$. 
We can then take $X_{2, 4}X_7X_{10}X_{11}$ and $X_{12}X_{14}X_{15}$ as candidates for an arm and a 
leg respectively. We get the equation 
$$\lambda([x_{12}(s_{12})x_{14}(s_{14})x_{15}(s_{15}), 
x_{2, 4}(t)x_7(t_7)x_{10}(t_{10})x_{11}(t_{11})])=\lambda(x_{17}(a_{23}s_{12}t_1)x_{18}(a_{24}s_{14}t_1)\cdot$$
$$\cdot x_{19}(a_{24}s_{15}t_1)) \phi(s_{12}(a_{21}t_{10}+a_{22}t_{11})+s_{14}(a_{21}t_{7}+a_{24}t_{11})+s_{15}(a_{22}t_{7}+a_{24}t_{10}))=1.$$
We get that $X_{(a_{17, 18, 19})}^{' 4}=X_{7, 10, 11}:=\{x_{7, 10, 11}(t) \mid t \in \F_q\}$ and $Y_{(a_{17, 18, 19})}^{' 4}=1$, where for every 
$t \in \F_q$ we have $x_{7, 10, 11}(t):=x_7(a_{24}t)x_{10}(a_{22}t)x_{11}(a_{21}t)$, and 
$$V_{(a_{17, 18, 19})}^{4}=X_3X_{7, 10, 11}X_8X_9 X_{16}X_{(a_{17, 18, 19})}^{' 3}Y_{(a_{17, 18, 19})}^{' 3}Y'Z/(\ker \lambda').$$

Notice that $X_{(a_{17, 18, 19})}^{' 4} X_{7, 10, 11}$ 
is a subgroup of $V_{a_{17, 18, 19}}$, and that $X_3$ is there a direct product factor. 
Observe then that $[X_8, X_9]=[X_{16}, X_{7, 10, 11}]=1$, and that 
$$\lambda([x_8(s_8)x_9(s_9), x_{7, 10, 11}(t)x_{16}(t_{16})])=
\lambda(x_{17}(a_{24}s_9t)x_{18}(a_{22}s_8t)x_{19}(a_{21}s_8t))
\phi(a_{23}s_8t_{16}+a_{24}s_9t_{16}).$$
As $a_{17, 18, 19} \ne 0$, applying Proposition \ref{prop:newrl} with arm $X_{7, 10, 11}X_{16}$ and leg 
$X_{7, 10, 11}X_{16}$ yields $X_{(a_{17, 18, 19})}^{' 5}=Y_{(a_{17, 18, 19})}^{' 5}=1$, and the subquotient 
$V_{(a_{17, 18, 19})}^{5}=X_3X_{(a_{17, 18, 19})}^{' 3}Y_{(a_{17, 18, 19})}^{' 3}Y'Z/(\ker \lambda')$ of $V$ is abelian. 
We obtain the family $\cF_5^{1, p = 2}$ in Table \ref{tab:coresD6}. 
 
Let us now assume $c_{17, 18, 19} = 0$. As done for $c_{17, 18, 19} \ne 0$, we take $X_{1, 5, 6}$ 
and $X_{13}$ as candidates for an arm and a leg respectively, but as we have no $a_{17, 18, 19}$ term in 
Equation \eqref{eq:15613} we now get $X_{(0)}^{' 3}=Y_{(0)}^{' 3}=1$ and 
$V_{(0)}^{3}=X_{2, 4}X_3X_7 \cdots X_{12} X_{14}X_{15} X_{16}Z/(\ker \lambda')$. Notice that 
in this subquotient we have $[X_{2, 4}, X_{j}]=1$ for $j=8, 9, 16$, and that $[X_{16}, X_i] \ne 1$ implies 
$i \in \{8, 9\}$. We can apply Proposition \ref{prop:newrl} with $X_{16}$ as a candidate for an arm, and 
$X_8X_9$ as a candidate for a leg. We have that 
$$\lambda([x_8(s_8)x_9(s_9), x_{16}(t_{16})])=\lambda(x_{23}(s_8t_{16})x_{24}(s_9t_{16}))=
\phi(t_{16}(a_{23}s_8+a_{24}s_9)).$$
We then get $X_{(0)}^{' 4}=1$ and $Y_{(0)}^{' 4}=\{x_{8, 9}(s) \mid s \in \F_q\}$, where 
$x_{8, 9}(s)=x_8(a_{24}s)x_9(a_{23}s)$ for every $s \in \F_q$, and 
$$V_{(0)}^{4}=X_{2, 4}X_3X_7 X_{8, 9}X_{10}X_{11} X_{12} X_{14}X_{15} Z/(\ker \lambda').$$ 
Now we observe that \eqref{eq:71011} holds with $V_{(0)}^{4}$ in place of $V^{2}$, as 
$X_{20}$ and $X_{17}X_{18}X_{19}$ are contained in $\ker\lambda'$. 
We take $X_7X_{10}X_{11}$ as 
a candidate for an arm and $X_{12}X_{14}X_{15}$ 
as a candidate for a leg. Equation \eqref{eq:tri2} yields in this case 
$$X_{(0)}^{' 5}:=X_{7, 10, 11}=\{x_{7, 10, 11}(t) \mid t \in \F_q\} \quad \text{ and } \quad 
Y_{(0)}^{' 5}:=X_{12, 14, 15}=\{x_{12, 14, 15}(s) \mid s \in \F_q\},$$ 
and $V_{(0)}^{5}=X_{2, 4}X_3 X_{8, 9} X_{7, 10, 11}X_{12, 14, 15}Z/(\ker \lambda')$. Here, for $s, t \in \F_q$ we have 
$$x_{7, 10, 11}(t)=x_7(a_{24}t)x_{10}(a_{22}t)x_{11}(a_{21}t) \quad \text{ and } \quad 
x_{12, 14, 15}(s)=x_{12}(a_{24}s)x_{14}(a_{22}s)x_{15}(a_{21}s).$$ 

Finally, we observe that $X_{8, 9}$ and $X_{12, 14, 15}$ are central in $V_{(0)}^{5}$; we extend $\lambda'$ to 
$\lambda'':=\lambda'^{c_{8, 9}, c_{12, 14, 15}}$ in the usual way. Observe that $[X_{2, 4}, X_{7, 10, 11}]=1$. We can then 
take $X_{2, 4}X_{7, 10, 11}$ and $X_3$ as candidates for an arm and a leg respectively. We study
$$\lambda([x_3(s_3), x_{2, 4}(t_1)x_{7, 10, 11}(t_2)])
=\phi(s_3(c_{8, 9}t_1+c_{12, 14, 15}t_2+a_{21}a_{22}a_{24}t_2^2))=1.$$ 

If $a_{8, 9}:=c_{8, 9}\ne 0$ and $b_{12, 14, 15}:=c_{12, 14, 15}$ is arbitrary in $\F_q$, we have that 
$$X_{(0, a_{8, 9}, b_{12, 14, 15})}^{' 6}=\{x_{2, 4}((b_{12, 14, 15}t+a_{21}a_{22}a_{24}t^2)/(a_{8, 9}^2))
x_{7, 10, 11}(t) \mid t \in \F_q\}  \text{ and }  Y_{(0, a_{8, 9}, b_{12, 14, 15})}^{' 6}=1,$$
and $V_{(0, a_{8, 9}, b_{12, 14, 15})}^{6}=X_{(0, a_{8, 9}, b_{12, 14, 15})}^{' 6} X_{8, 9} Y_{(0)}^{' 5}Z/(\ker \lambda'')$ is 
abelian; this gives the family $\cF_5^{2, p = 2}$ in Table \ref{tab:coresD6}. 

If 
$c_{8, 9}=0$ and $a_{12, 14, 15}:=c_{12, 14, 15}\ne 0$, we have that 
$$X_{(0, 0, a_{12, 14, 15})}^{' 6}=X_{2,4}\{1, x_{7, 10, 11}(c_{12, 14, 15}/(a_{21}a_{22}a_{24}))\}$$ 
and 
$$Y_{(0, 0, a_{12, 14, 15})}^{' 6}=\{1, x_3(a_{21}a_{22}a_{24}/(c_{12, 14, 15}^2))\},$$
and $V_{(0, 0, a_{12, 14, 15})}^{6}=X_{(0, 0, a_{12, 14, 15})}^{' 6}Y_{(0, 0, a_{12, 14, 15})}^{' 6} Y_{(0)}^{' 5}Z/(\ker \lambda'')$ 
is abelian; we obtain the family $\cF_5^{4, p = 2}$ in Table \ref{tab:coresD6}. 

If $c_{8, 9}=c_{12, 14, 15}=0$, we have that 
$$X_{(0, 0, 0)}^{' 6}=X_{2,4} \quad \text{ and } \quad Y_{(0, 0, a_{12, 14, 15})}^{' 6}=1,$$
and $V_{(0, 0, 0)}^{6}=X_{2, 4} Z/(\ker \lambda'')$ 
is abelian. This yields the family $\cF_5^{3, p = 2}$ in Table \ref{tab:coresD6}.

As done for the case $p \ne 2$, we check that  
$$\Irr(X_\cS)_\cZ=\cF_5^{1, p = 2} \sqcup \cF_5^{2, p = 2} \sqcup \cF_5^{3, p = 2} \sqcup \cF_5^{4, p = 2}$$
by apply the counting argument and Equation \eqref{eq:allfam}. This concludes our analysis. 
\end{proof}

Finally, we study the unique core of the form $[5,20,25]$ in type $\rE_6$. In this case,
\begin{itemize}
\item $\cS=\{\alpha_{1}, \alpha_{2}, \alpha_{3}, \alpha_{
5}, \alpha_{6}, \alpha_{7}, \alpha_{8}, \alpha_{9}, \alpha_{10}, \alpha_{11}, \alpha_{12}, \alpha_{13}, \alpha_{14}, \alpha_{15}, \alpha_{16}, \alpha_{17}, \alpha_{18}, \alpha_{20}, \alpha_{21}, \alpha_{24}\}$,
\item $\cZ=\{\alpha_{17}, \alpha_{18}, \alpha_{20}, \alpha_{21}, \alpha_{24}\}$,
\item $\cA=\{\alpha_{4}\}$ and 
$\cL=\{\alpha_{19}\}$, 
\item $\cI=\{\alpha_{1}, \alpha_{6}, \alpha_{8}, \alpha_{9}, \alpha_{10}\}$ and $\cJ=\{\alpha_{7}, \alpha_{11}, \alpha_{13}, \alpha_{14}, \alpha_{15}\}$.
\end{itemize}

\begin{prop}
\label{core[5,20,25]}
The irreducible characters corresponding to the $[5, 20, 25]$-core in type $\rE_6$ are parametrized as follows: 
\begin{itemize}
\item 
If $p\neq 3$, then $\Irr(X_\cS)_\cZ=\cF_8^{p \ne 3}$ consists of $q(q-1)^5$ characters of degree $q^7$. 
\item If $p= 3$, then 
$$\Irr(X_\cS)_\cZ=\cF_8^{1, p = 3} \sqcup \cF_8^{2, p = 3},$$
where
  \begin{itemize}
    \item $\cF_8^{1, p = 3}$ consists of $(q-1)^6$ characters of degree $q^7$, and 
    \item $\cF_8^{2, p = 3}$ consists of $q^2(q-1)^5$ characters of degree $q^6$. 
  \end{itemize}
\end{itemize}
The labels of the characters in $\cF_8^{p \ne 3}$ and in $\cF_8^{1, p = 3}$, $\cF_8^{2, p = 3}$ are collected in Table \ref{tab:coresE6}. 
\end{prop}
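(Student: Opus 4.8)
The plan is to iterate Corollary \ref{cor:plus}, Proposition \ref{prop:newrl} and Corollary \ref{cor:cornewrl}, stripping off one arm/leg pair at a time until the surviving subquotient is abelian, exactly as in the proofs of Propositions \ref{core[4,18,18]} and \ref{core[4,24,43]}. First I would compute the bilinear form of Equation \eqref{eq:1i} attached to the prescribed arm candidate $X_\cI$ and leg candidate $X_\cJ$, reading the constants $d_{h,k}$ off the $\rE_6$ commutator relations in \cite{GAP4} for $x = x_1(t_1)x_6(t_6)x_8(t_8)x_9(t_9)x_{10}(t_{10})$ and $y = x_7(s_7)x_{11}(s_{11})x_{13}(s_{13})x_{14}(s_{14})x_{15}(s_{15})$. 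Solving $\lambda([y,x]) = 1$ for all $y$ (respectively all $x$) produces $X'$ and $Y'$, which I expect to be one-dimensional diagonal subgroups of the shape $X_{i_1, \dots, i_k}$ of Definition \ref{def:genroot}; the first pass of Corollary \ref{cor:plus} then identifies $\Irr(X_\cS)_\cZ$ with $\Irr(\overline V \mid \lambda)$, where the five heart roots $\alpha_2, \alpha_3, \alpha_5, \alpha_{12}, \alpha_{16}$ still survive in $\overline V$.

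The decisive step is the analysis of the heart. Its root elements, paired with the remaining arm/leg candidates, have commutators landing in $X_\cZ$, and applying $\lambda$ yields a quadratic expression in which---by the same mechanism as in Proposition \ref{core[6,16,12]}, where several $\pm 1$ structure constants of the generalized root elements accumulate to a coefficient $3$---a monomial carries the factor $3$. This is exactly where $p = 3$ parts company with the generic case. For $p \neq 3$ the factor is invertible, one further application of Proposition \ref{prop:newrl} collapses the associated arm and leg, and after consuming $\alpha_{12}, \alpha_{16}$ I am left with the abelian subquotient $X_{2,3,5}X_\cZ/\ker\lambda$; reading off its linear characters over $\lambda$ gives the single family $\cF_8^{p \ne 3}$ of $q(q-1)^5$ characters of degree $q^7$. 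For $p = 3$ the factor vanishes, so the diagonal $X_{7,11,13,14,15}$ that generically served as a leg becomes central; I would extend $\lambda$ over it to $\lambda^{c}$ in the notation fixed before Proposition \ref{core[4,18,18]}, and split on its value $c =: a_{7,11,13,14,15}$. When $a_{7,11,13,14,15} \ne 0$ the surviving subquotient is already abelian at degree $q^7$, yielding $\cF_8^{1,p=3}$ with $(q-1)^6$ characters; when $c = 0$ more root subgroups persist and a final reduction lands on $X_{1,6,8,9,10,12,16}X_{2,3,5}X_\cZ/\ker\lambda$, giving the lower family $\cF_8^{2,p=3}$ of $q^2(q-1)^5$ characters of degree $q^6$.

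The main obstacle will be the bookkeeping through these nested reductions: at every stage I must re-verify hypotheses (i)--(iv) of Corollary \ref{cor:plus} for the current subquotient, keep track of which root subgroups have become central versus which remain admissible arm/leg candidates, and follow how the generalized root elements $x_{i_1, \dots, i_k}$ and the successive kernels $\ker\lambda^{\ub}$ evolve---in particular isolating the precise quadratic monomial that carries the coefficient $3$. To certify completeness I would close, as in Propositions \ref{core[4,18,18]} and \ref{core[4,24,43]}, with the counting identity \eqref{eq:allfam}: for $p \ne 3$ one checks $q(q-1)^5(q^7)^2 = q^{15}(q-1)^5 = q^{|\cS \setminus \cZ|}(q-1)^{|\cZ|}$, while for $p = 3$ the two families contribute $(q-1)^6(q^7)^2 + q^2(q-1)^5(q^6)^2 = q^{14}(q-1)^6 + q^{14}(q-1)^5 = q^{15}(q-1)^5$, confirming that $\Irr(X_\cS)_\cZ$ decomposes exactly as claimed.
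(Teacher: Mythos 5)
Your scaffolding (iterate Proposition \ref{prop:newrl} and Corollary \ref{cor:cornewrl}, split on the value of a central extension of $\lambda$, certify completeness with the counting identity \eqref{eq:allfam}) matches the paper, and your final numerology is correct. But the mechanism you propose for the $p=3$ dichotomy is misplaced, and the plan as written fails at what you call the decisive step. The factor $3$ does not arise from a quadratic heart commutator as in the $[6,16,12]$-core; it sits already inside the \emph{linear} system \eqref{eq:1i}. For this core that system is
\[
s_7(a_{17}t_8+a_{18}t_{10})+s_{11}(-a_{20}t_8-a_{21}t_9)+s_{13}(-a_{17}t_1+a_{24}t_{10})+s_{14}(a_{20}t_6+a_{24}t_9)+s_{15}(-a_{18}t_1+a_{21}t_6+a_{24}t_8)=0,
\]
and eliminating $t_8,t_9,t_1,t_6$ from the first four coefficient equations forces the fifth to collapse to $3\,a_{17}^{-1}a_{18}a_{24}\,t_{10}=0$. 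Hence $X'=Y'=1$ when $p\ne 3$, contrary to your expectation that the first pass yields one-dimensional diagonals for every $p$; the diagonals $X_{1,6,8,9,10}$ and $X_{7,11,13,14,15}$ exist precisely when $p=3$. Consequently, for $p\ne 3$ the heart requires no quadratic analysis at all: in $\overline V=X_2X_3X_5X_{12}X_{16}Z/(\ker\lambda)$ the only nontrivial brackets are the four linear ones of \eqref{eq:com520}, and a second, purely linear application of Proposition \ref{prop:newrl} with arm candidate $X_2X_3X_5$ and leg candidate $X_{12}X_{16}$ leaves the abelian $X_{2,3,5}Z/(\ker\lambda)$, giving $\cF_8^{p\ne 3}$ directly. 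There is no ``invertible factor $3$'' to divide by in this branch, so an argument organized around that step cannot be carried out.

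Two further slips in your $p=3$ branch are consequences of the misplaced mechanism. First, when $a_{7,11,13,14,15}\ne 0$ the subquotient $\overline V$ is \emph{not} already abelian: you still need one more application of Proposition \ref{prop:newrl}, with arm candidate $X'X_{12}X_{16}$ and leg candidate $X_2X_3X_5$, and it is this step (with $X^{'2}=Y^{'2}=1$) that raises the degree from $q^4$, gained in the first pass, to the claimed $q^7$; only the resulting $V^2=Y'Z/(\ker\lambda')$ is abelian, with $(q-1)^6$ characters. Second, when $c=0$ the terminal arm is not the generalized root subgroup $X_{1,6,8,9,10,12,16}$ of Definition \ref{def:genroot}: the same commutator computation produces quadratic twists, namely $X^{'2}_{(0)}=\{x_{1,6,8,9,10}(t)\,x_{12}(a_{18}a_{20}a_{21}a_{24}t^2)\,x_{16}(a_{17}a_{18}a_{21}a_{24}t^2)\mid t\in\F_q\}$ — so quadratic terms do occur here, but their coefficients are monomials in the $a_i$'s, not multiples of $3$. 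Your closing count via \eqref{eq:allfam} is correct arithmetic, but it can only certify the decomposition once the reductions it checks are the right ones.
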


\begin{proof}
The form of Equation \eqref{eq:1i} is 
\begin{align*}
s_7(a_{17}t&_8+a_{18}t_{10})+
s_{11}(-a_{20}t_8-a_{21}t_9)+
s_{13}(-a_{17}t_1+a_{24}t_{10})+\\
&+s_{14}(a_{20}t_6+a_{24}t_9)+
s_{15}(-a_{18}t_1+a_{21}t_6+a_{24}t_8)=0.
\end{align*}
Let $p \ne 3$. Then $X'=Y'=1$, and 
$\overline{V}= X_2X_3X_5X_{12}X_{16}Z/(\ker \lambda)$. Observe that in $\overline{V}$ the pairs of root subgroups 
that give nontrivial commutator brackets are exactly the following, 
\begin{equation}\label{eq:com520}
[X_2, X_{12}]=X_{17},\quad [X_2, X_{16}]=X_{20},\quad 
[X_3, X_{16}]=X_{21},\quad [X_5, X_{12}]=X_{18}.
\end{equation}
We apply Proposition \ref{prop:newrl} with $X_2X_3X_5$ as a candidate for 
an arm, and $X_{12}X_{16}$ as a candidate for a leg. We have  
$$[x_2(t_2)x_3(t_3)x_5(t_5), x_{12}(s_{12})x_{16}(s_{16})]
=x_{17}(s_{12}t_2)x_{18}(-s_{12}t_5)x_{20}
(s_{16}t_2)x_{21}(s_{16}t_3),$$
hence 
$$\lambda([x_2(t_2)x_3(t_3)x_5(t_5), x_{12}(s_{12})])
=\phi(s_{12}(a_{17}t_2-a_{18}t_5)+s_{16}(a_{20}t_2+a_{21}t_3)).$$
We get $X^{' 2}=\{x_{2, 3, 5}(t) \mid t \in \F_q\}$ and $Y^{' 2}=1$, 
where 
$$x_{2, 3, 5}(t):=x_2(a_{18}a_{21}t)x_3(-a_{18}a_{20}t)x_5(a_{17}a_{21}t)$$ for every $t \in \F_q$. As $V^2=X''Z/(\ker \lambda)$ is abelian, we get 
the family $\cF_8^{p \ne 3}$ in Table \ref{tab:coresE6}.

Let us now assume that $p=3$. Then we have 
$$X':=\{x_{1, 6, 8, 9, 10}(t) \mid t \in \F_q\} \qquad \text{and} \qquad Y':=\{x_{7, 11, 13, 14, 15}(s) \mid s \in \F_q\},$$
where for every $s, t \in \F_q$, 
$$x_{1, 6, 8, 9, 10}(t):=x_{1}(a_{21}a_{24}t)x_{6}(-a_{18}a_{24}t)x_{8}(-a_{18}a_{21}t)x_{9}(a_{18}a_{20}t)x_{10}(a_{17}a_{21}t)$$
and
$$x_{7, 11, 13, 14, 15}(s):=x_{7}(a_{20}a_{24}s)x_{11}(-a_{17}a_{24}s)x_{13}(-a_{18}a_{20}s)x_{14}(-a_{17}a_{21}s)x_{15}(a_{17}a_{20}s),$$
and $\overline{V}= X_2X_3X_5X_{12}X_{16}X'Y'Z/(\ker \lambda)$. We extend $\lambda$ to $\lambda'=\lambda^{c_{7, 11, 13, 14, 15}}$, $c_{7, 11, 13, 14, 15} \in \F_q$. 

Notice that $X'$ is a subgroup of $\overline{V}$. 
Moreover, the nontrivial commutator relations in $\overline{V}$ are 
as in Equation \eqref{eq:com520}, plus $[X', X_i] \ne 1$ if and 
only if $i \in \{2, 3, 5\}$, in which case such a commutator 
lies inside $Y'$. In this case, Proposition \ref{prop:newrl} 
applies with $X'X_{12}X_{16}$ as a candidate for 
an arm and $X_2X_3X_5$ as a candidate for a leg. We study the equation  
\begin{align*}
\lambda(&[x_{1,6,8,9,10}(t)x_{12}(t_{12})x_{16}(t_{16}), x_2(s_2)x_3(s_3)x_5(s_5)])=
\lambda(
x_7(-a_{21}a_{24}s_3t)x_{11}(-a_{18}a_{24}s_5t))\cdot 
\\
&\cdot\lambda(x_{13}(a_{18}a_{20}s_2t-a_{18}a_{21}s_3t)
x_{14}(a_{18}a_{21}s_5t+a_{17}a_{21}s_2t)
x_{15}(-a_{18}a_{20}s_5t+a_{17}a_{21}s_3t))\cdot \\
&\cdot \phi(s_2(a_{17}t_{12}+a_{20}t_{16}+
a_{17}a_{18}a_{20}a_{21}a_{24}t^2)
+s_3(a_{21}t_{16}-a_{17}a_{18}a_{21}^2a_{24}t^2))\cdot\\
&\cdot \phi(s_5(-a_{18}t_{12}-a_{18}^2a_{20}a_{21}a_{24}t^2))=1.
\end{align*}
If $a_{7, 11,  13, 14, 15}:=c_{7, 11,  13, 14, 15}\ne 0$, then we have that $X_{(a_{7, 11,  13, 14, 15})}^{' 2}
=Y_{(a_{7, 11,  13, 14, 15})}^{' 2}=1$, and $V_{(a_{7, 11,  13, 14, 15})}^2= Y'Z/(\ker \lambda')$ is abelian. 
We get the family $\cF_8^{1, p = 3}$ in Table \ref{tab:coresE6}. 

If $c_{7, 11,  13, 14, 15}= 0$, 
then we have 
$$X_{(0)}^{',2}=\{x_{1,6,8,9,10}(t)x_{12}(a_{18}a_{20}a_{21}a_{24}t^2)x_{16}(a_{17}a_{18}a_{21}a_{24}t^2) \mid t \in \F_q\},$$
$$Y_{(0)}^{',2}=\{x_2(a_{18}a_{21}s)x_3(-a_{18}a_{20}s)x_5(a_{17}a_{21}s) \mid s \in \F_q\},$$
and $V_{(0)}^2= X_{(0)}^{',2}Y_{(0)}^{',2}Z/(\ker \lambda')$ is 
abelian. This yields the family $\cF_8^{2, p = 3}$ in Table \ref{tab:coresE6}. 

The claim now follows by Equation \eqref{eq:allfam} as done in Propositions \ref{core[4,18,18]} and \ref{core[4,24,43]}. 
\end{proof}
 
\begin{table}[h]
\footnotesize
\begin{tabular}{|l|l|}
\hline
$D$ & $k(\mathrm{UD}_6(p^d), D), p \ge 3$  \\
\hhline{|=|=|}
 $1$ & $v^6 +6v^5 +15v^4 +20v^3 +15v^2 +6v+1$   \\
\hline
 $q$ & $v^7 +9v^6 +31v^5 +54v^4 +51v^3 +25v^2+5v$   \\
\hline
$q^2$ & $v^8 +9v^7 +38v^6 +89v^5 +119v^4 +89v^3 +34v^2 +5v$   \\
\hline
$q^3$ & $v^8 +15v^7 +72v^6 +165v^5 +201v^4 +130v^3 +40v^2 +4v$   \\
\hline
$q^4$ & $3v^8 +31v^7 +124v^6 +246v^5 +260v^4 +145v^3 +39v^2 +4v$ \\
\hline
$q^5$ & $v^{10} +10v^9 +46v^8 +135v^7 +280v^6 +393v^5 +339v^4 +163v^3 +36v^2 +2v$ \\
\hline
$q^6$ & $2v^9 +18v^8 +77v^7 +200v^6 +317v^5 +288v^4 +138v^3 +30v^2 +2v$  \\
\hline
$q^7$ & $5v^8 +43v^7 +154v^6 +282v^5 +270v^4 +128v^3 +25v^2 +v $   \\
\hline
$q^8$ & $3v^8 +31v^7 +122v^6 +227v^5 +208v^4 +89v^3 +15v^2 +v$   \\
\hline
$q^9$ & $v^9 +9v^8 +41v^7 +113v^6 +181v^5 +152v^4 +61v^3 +8v^2$  \\
\hline
$q^{10}$ & $v^8 +8v^7 +31v^6 +62v^5 +61v^4 +27v^3 +5v^2$   \\
\hline
$q^{11}$ & $2v^7 +12v^6 +29v^5 +32v^4 +15v^3 +2v^2$   \\
\hline
$q^{12}$ & $v^6 +4v^5 +6v^4 +4v^3 +v^2$   \\
\hline
\hline 
\multicolumn{2}{|c|}{\begin{footnotesize}$k(\mathrm{UD}_6(q))= v^{10} + 13v^9 + 87v^8 + 393v^7 + 1157v^6 + 2032v^5 + 2005v^4 + 1060v^3 + 275v^2+ 30v + 1$\end{footnotesize}}\\
\hline
\end{tabular}
\caption{The numbers of irreducible characters of $\mathrm{UD}_6(q)$ of fixed degree for $q=p^d$, $p \ge 3$, where $v=q-1$.}
\label{tab:fam3D6}
\end{table}
 
\begin{table}[!h]
\footnotesize
\begin{tabular}{|l|l|}
\hline
$D$ & $k(\mathrm{UD}_6(2^d), D)$  \\
\hhline{|=|=}
 $1$ & $v^6 + 6v^5 + 15v^4 + 20v^3 + 15v^2 + 6v + 1  $   \\
\hline
 $q$ & $v^7 + 9v^6 + 31v^5 + 54v^4 + 51v^3 + 25v^2 + 5v $   \\
\hline
$q^2$ & $ v^8 + 9v^7 + 38v^6 + 89v^5 + 119v^4 + 89v^3 + 34v^2 + 5v $   \\
\hline
$q^3/2$ & $ 4v^6 + 8v^5 + 4v^4  $   \\
\hline
$q^3$ & $v^8 + 15v^7 + 71v^6 + 163v^5 + 200v^4 + 130v^3 + 40v^2 + 4v $   \\
\hline
$q^4/2$ & $  4v^7 + 16v^6 + 16v^5 + 4v^4 $   \\
\hline
$q^4$ & $4v^8 + 35v^7 + 128v^6 + 247v^5 + 260v^4 + 145v^3 + 39v^2 + 4v $ \\
\hline
$q^5/2$ & $4v^7 + 16v^6 + 20v^5 + 8v^4   $   \\
\hline
$q^5$ & $v^{10} + 10v^9 + 46v^8 + 135v^7 + 278v^6 + 388v^5 + 337v^4 + 163v^3 + 36v^2 + 2v $ \\
\hline
$q^6/2$ & $ 8v^7 + 28v^6 + 28v^5 + 8v^4  $   \\
\hline
$q^6$ & $2v^9 + 18v^8 + 76v^7 + 196v^6 + 312v^5 + 286v^4 + 138v^3 + 30v^2 + 2v$  \\
\hline
$q^7/2$ & $4v^7 + 24v^6 + 32v^5 + 12v^4   $   \\
\hline
$q^7$ & $6v^8 + 47v^7 + 157v^6 + 280v^5 + 268v^4 + 128v^3 + 25v^2 + v  $   \\
\hline
$q^8/2$ & $ 8v^7 + 36v^6 + 36v^5 + 8v^4  $   \\
\hline
$q^8$ & $4v^8 + 35v^7 + 122v^6 + 221v^5 + 205v^4 + 89v^3 + 15v^2 + v$   \\
\hline
$q^9/2$ & $ 12v^7 + 40v^6 + 36v^5 + 8v^4  $   \\
\hline
$q^9$ & $v^9 + 9v^8 + 38v^7 + 102v^6 + 168v^5 + 149v^4 + 61v^3 + 8v^2$  \\
\hline
$q^{10}/4$ & $ 16v^6 $   \\
\hline
$q^{10}/2$ & $ 8v^7 + 20v^6 + 28v^5 + 4v^4  $   \\
\hline
$q^{10}$ & $v^8 + 6v^7 + 25v^6 + 55v^5 + 60v^4 + 27v^3 + 5v^2 $   \\
\hline
$q^{11}/2$ & $ 8v^6 + 12v^5 + 4v^4  $   \\
\hline
$q^{11}$ & $2v^7 + 10v^6 + 26v^5 + 31v^4 + 15v^3 + 2v^2 $   \\
\hline
$q^{12}$ & $v^6 + 4v^5 + 6v^4 + 4v^3 + v^2 $   \\
\hline
\hline 
\multicolumn{2}{|c|}{\begin{footnotesize}$k(\mathrm{UD}_6(q))= v^{10} + 13v^9 + 90v^8 + 447v^7 + 1346v^6 + 2206v^5 + 2050v^4 + 1060v^
3 + 275v^2 + 30v + 1$\end{footnotesize}}\\
\hline
\end{tabular}
\caption{The numbers of irreducible characters of $\mathrm{UD}_6(q)$ of fixed degree for $q=2^d$, where $v=q-1$.}
\label{tab:fam2D6}
\end{table}
 
\begin{table}[!h]
\footnotesize
\begin{tabular}{|l|l|}
\hline
$D$ & $k(\mathrm{UE}_6(p^d), D), p \ge 5$  \\
\hhline{|=|=|}
 $1$ & $v^6 +6v^5 +15v^4 +20v^3 +15v^2 +6v+1$   \\
\hline
 $q$ & $v^7 +9v^6 +31v^5 +54v^4 +51v^3 +25v^2 +5v$   \\
\hline
$q^2$ & $5v^7 +34v^6 +93v^5 +130v^4 +97v^3 +36v^2 +5v$   \\
\hline
$q^3$ & $v^9 +9v^8 +42v^7 +123v^6 +223v^5 +240v^4 +145v^3 +44v^2 +5v$   \\
\hline
$q^4$ & $5v^8 +42v^7 +155v^6 +300v^5 +316v^4 +176v^3 +46v^2 +4v$ \\
\hline
$q^5$ & $2v^9 +23v^8 +118v^7 +327v^6 +518v^5 +462v^4 +219v^3 +48v^2 +3v$ \\
\hline
$q^6$ & $14v^8 +113v^7 +367v^6 +602v^5 +523v^4 +231v^3 +45v^2 +3v$  \\
\hline
$q^7$ & $v^{11} +11v^{10} +57v^9 +186v^8 +433v^7 +730v^6 +826v^5 +560v^4 +204v^3 +36v^2 +2v $   \\
\hline
$q^8$ & $v^{10} +10v^9 +51v^8 +173v^7 +396v^6 +558v^5 +444v^4 +183v^3 +31v^2 +v$   \\
\hline
$q^9$ & $3v^9 +30v^8 +144v^7 +385v^6 +575v^5 +455v^4 +177v^3 +28v^2 +v$  \\
\hline
$q^{10}$ & $12v^8 +95v^7 +304v^6 +480v^5 +375v^4 +131v^3 +16v^2 +v$   \\
\hline
$q^{11}$ & $2v^9 +21v^8 +97v^7 +243v^6 +334v^5 +233v^4 +71v^3 +10v^2$   \\
\hline
$q^{12}$ & $2v^8 +20v^7 +76v^6 +139v^5 +124v^4 +49v^3 +6v^2$   \\
\hline
$q^{13}$ & $3v^7 +24v^6 +63v^5 +68v^4 +28v^3 +3v^2$   \\
\hline
$q^{14}$ & $4v^6 +19v^5 +27v^4 +12v^3 +v^2$   \\
\hline
$q^{15}$ & $3v^5+8v^4+5v^3$   \\
\hline
$q^{16}$ & $v^4+v^3$   \\
\hline
\hline 
\multicolumn{2}{|c|}{\begin{footnotesize}$k(\mathrm{UE}_6(q))= v^{11} + 12v^{10} + 75v^9 + 353v^8 + 1286v^7 + 3178v^6 + 4770v^5 + 4035v^4 + 1800v^3 + 390v^2 + 36v + 1$\end{footnotesize}}\\
\hline
\end{tabular}
\caption{The numbers of irreducible characters of $\mathrm{UE}_6(q)$ of fixed degree for $q=p^d$, $p \ge 5$, where $v=q-1$.}
\label{tab:fam5E6}
\end{table}

\begin{table}[!h]
\footnotesize
\begin{tabular}{|l|l|}
\hline
$D$ & $k(\mathrm{UE}_6(3^d), D)$  \\
\hhline{|=|=|}
 $1$ & $v^6+6v^5+15v^4+20v^3+15v^2+6v+1$   \\
\hline
 $q$ & $v^7+9v^6+31v^5+54v^4+51v^3+25v^2+5v$   \\
\hline
$q^2$ & $5v^7+34v^6+93v^5+130v^4+97v^3+36v^2+5v$   \\
\hline
$q^3$ & $v^9+9v^8+42v^7+123v^6+223v^5+240v^4+145v^3+44v^2+5v$   \\
\hline
$q^4$ & $5v^8+42v^7+155v^6+300v^5+316v^4+176v^3+46v^2+4v$ \\
\hline
$q^5$ & $2v^9+23v^8+118v^7+327v^6+518v^5+462v^4+219v^3+48v^2+3v$ \\
\hline
$q^6$ & $14v^8+113v^7+367v^6+602v^5+523v^4+231v^3+45v^2+3v$  \\
\hline
$q^7/3$ & $9v^6/2$   \\
\hline
$q^7$ & $v^{11}+11v^{10}+57v^9+186v^8+434v^7+1463v^6/2+827v^5+560v^4+204v^3+36v^2+2v $   \\
\hline
$q^8$ & $v^{10}+10v^9+52v^8+178v^7+403v^6+560v^5+444v^4+183v^3+31v^2+v$   \\
\hline
$q^9$ & $3v^9+30v^8+144v^7+384v^6+572v^5+455v^4+177v^3+28v^2+v$  \\
\hline
$q^{10}$ & $12v^8+95v^7+304v^6+480v^5+375v^4+131v^3+16v^2+v$   \\
\hline
$q^{11}$ & $2v^9+21v^8+97v^7+243v^6+334v^5+233v^4+71v^3+10v^2$   \\ 
\hline
$q^{12}$ & $2v^8+20v^7+76v^6+139v^5+124v^4+49v^3+6v^2$   \\
\hline
$q^{13}$ & $3v^7+24v^6+63v^5+68v^4+28v^3+3v^2$   \\
\hline
$q^{14}$ & $4v^6+19v^5+27v^4+12v^3+v^2$   \\
\hline
$q^{15}$ & $3v^5+8v^4+5v^3$   \\
\hline
$q^{16}$ & $v^4+v^3$   \\
\hline
\hline 
\multicolumn{2}{|c|}{\begin{footnotesize}$k(\mathrm{UE}_6(q))= v^{11} + 12v^{10} + 75v^9 + 354v^8 + 1292v^7 + 3190v^6 + 4770v^5 + 4035v^4 + 1800v^3 + 390v^2 + 36v + 1$\end{footnotesize}}\\
\hline
\end{tabular}
\caption{The numbers of irreducible characters of $\mathrm{UE}_6(q)$ of fixed degree for $q=3^d$, where $v=q-1$.}
\label{tab:fam3E6}
\end{table}

\begin{table}[!h]
\footnotesize
\begin{center}
\begin{tabular}{|l|l|}
\hline
$D$ & $k(\mathrm{UE}_6(2^d), D)$  \\
\hhline{|=|=|}
 $1$ & $v^6+6v^5+15v^4+20v^3+15v^2+6v+1$   \\
\hline
 $q$ & $v^7+9v^6+31v^5+54v^4+51v^3+25v^2+5v$   \\
\hline
$q^2$ & $5v^7+34v^6+93v^5+130v^4+97v^3+36v^2+5v$   \\
\hline
$q^3/2$ & $4v^6+8v^5+4v^4$   \\
\hline
$q^3$ & $v^9+9v^8+42v^7+122v^6+221v^5+239v^4+145v^3+44v^2+5v$   \\
\hline
$q^4/2$ & $8v^6+16v^5+8v^4$   \\
\hline
$q^4$ & $5v^8+44v^7+159v^6+302v^5+316v^4+176v^3+46v^2+4v$ \\
\hline
$q^5/2$ & $12v^6+24v^5+12v^4$   \\
\hline
$q^5$ & $2v^9+24v^8+123v^7+333v^6+517v^5+459v^4+219v^3+48v^2+3v$ \\
\hline
$q^6/2$ & $16v^6+32v^5+16v^4$   \\
\hline
$q^6$ & $14v^8+115v^7+368v^6+597v^5+519v^4+231v^3+45v^2+3v$  \\
\hline
$q^7/2$ & $24v^7+92v^6+92v^5+20v^4$   \\
\hline
$q^7$ & $v^{11}+11v^{10}+57v^9+188v^8+437v^7+723v^6+811v^5+555v^4+204v^3+36v^2+2v$   \\
\hline
$q^8/2$ & $4v^7+28v^6+44v^5+20v^4$   \\
\hline
$q^8$ & $v^{10}+10v^9+51v^8+176v^7+399v^6+553v^5+441v^4+183v^3+31v^2+v$   \\
\hline
$q^9/2$ & $8v^7+44v^6+56v^5+20v^4$   \\
\hline
$q^9$ & $3v^9+32v^8+154v^7+398v^6+577v^5+452v^4+177v^3+28v^2+v$  \\
\hline
$q^{10}/2$ & $4v^7+28v^6+44v^5+20v^4$   \\
\hline
$q^{10}$ & $13v^8+102v^7+314v^6+479v^5+370v^4+131v^3+16v^2+v$   \\
\hline
$q^{11}/2$ & $4v^7+36v^6+56v^5+20v^4$   \\
\hline
$q^{11}$ & $2v^9+21v^8+98v^7+239v^6+320v^5+224v^4+71v^3+10v^2$   \\
\hline
$q^{12}/2$ & $12v^6+28v^5+16v^4$   \\
\hline
$q^{12}$ & $2v^8+20v^7+74v^6+132v^5+119v^4+49v^3+6v^2$   \\
\hline
$q^{13}/2$ & $8v^6+24v^5+12v^4$   \\
\hline
$q^{13}$ & $3v^7+22v^6+57v^5+64v^4+28v^3+3v^2$   \\
\hline
$q^{14}/2$ & $8v^5+8v^4$   \\
\hline
$q^{14}$ & $4v^6+17v^5+25v^4+12v^3+v^2$   \\
\hline
$q^{15}/2$ & $4v^4$   \\
\hline
$q^{15}$ & $3v^5+7v^4+5v^3$   \\
\hline
$q^{16}$ & $v^4+v^3$   \\
\hline
\hline 
\multicolumn{2}{|c|}{\begin{footnotesize}$k(\mathrm{UE}_6(q))=v^{11} + 12v^{10} + 75v^9 + 359v^8 + 1364v^7 + 3487v^6 + 5148v^5 + 4170v^4 + 1800v^3 + 390v^2 + 36v + 1$\end{footnotesize}}\\
\hline
\end{tabular}
\end{center}
\caption{The numbers of irreducible characters of $\mathrm{UE}_6(q)$ of fixed degree for $q=2^d$, where $v=q-1$.}
\label{tab:fam2E6}
\end{table}

\end{document}